\theoremstyle{plain}
\newtheorem{thm}{Theorem}
\newtheorem{cor}{Corollary}
\newtheorem{lem}[cor]{Lemma}
\newtheorem{prop}[cor]{Proposition}
\theoremstyle{definition}
\numberwithin{cor}{section}
\numberwithin{equation}{section}
\definecolor{orange}{RGB}{255,127,0}
\newcommand{\R}{\mathbb{R}}
\newcommand{\Z}{\mathbb{Z}}
\newcommand{\N}{\mathbb{N}}
\renewcommand{\d}{d}
\newcommand{\Rd}{\mathbb{R}^\d}
\newcommand{\Sd}{\mathbb{S}^\d}
\newcommand{\ep}{\varepsilon}
\newcommand{\barH}{\overline{H}}
\newcommand{\E}{\mathbb{E}}
\renewcommand{\P}{\mathbb{P}}
\newcommand{\Prob}{\P}
\newcommand{\indc}{\mathds{1}}
\newcommand{\SL}{\mathcal{S}}
\renewcommand{\bar}{\overline}
\renewcommand{\tilde}{\widetilde}
\renewcommand{\hat}{\widehat}
\renewcommand{\Re}{\mathcal{R}}
\newcommand{\Se}{\mathcal{S}}
\newcommand{\F}{\mathcal{F}}
\newcommand{\G}{\mathcal{G}}
\newcommand{\be}{\begin{equation}}
\newcommand{\ee}{\end{equation}}
\DeclareMathOperator{\tr}{tr}
\DeclareMathOperator{\intr}{int}
\DeclareMathOperator{\USC}{USC}
\DeclareMathOperator{\diam}{diam}
\DeclareMathOperator{\dist}{dist}
\DeclareMathOperator{\conv}{conv}
\DeclareMathOperator*{\esssup}{ess\,sup}
\DeclareMathOperator*{\essinf}{ess\,inf}
\begin{document}

\title[Quantitative stochastic homogenization of viscous HJ equations]{Quantitative stochastic homogenization of viscous Hamilton-Jacobi equations}

\author[S. N. Armstrong]{Scott N. Armstrong}
\address{Ceremade (UMR CNRS 7534) \\ Universit\'e Paris-Dauphine \\ Place du Mar\'echal De Lattre De Tassigny \\ 75775 Paris CEDEX 16, France}
\email{armstrong@ceremade.dauphine.fr}

\author[P. Cardaliaguet]{Pierre Cardaliaguet}
\address{Ceremade (UMR CNRS 7534) \\ Universit\'e Paris-Dauphine \\ Place du Mar\'echal De Lattre De Tassigny \\ 75775 Paris CEDEX 16, France}
\email{cardaliaguet@ceremade.dauphine.fr}

\keywords{stochastic homogenization, error estimate, convergence rate, viscous Hamilton-Jacobi equation, first-passage percolation}
\subjclass[2010]{35B27, 35F21, 60K35}

\date{\today}

\begin{abstract} 
We prove explicit estimates for the error in random homogenization of degenerate, second-order Hamilton-Jacobi equations, assuming the coefficients satisfy a finite range of dependence. In particular, we obtain an algebraic rate of convergence with overwhelming probability under certain structural conditions on the Hamiltonian.
\end{abstract}

\maketitle

\section{Introduction} \label{I}

\subsection{Motivation and informal statement of results}
The paper is concerned with second-order (i.e., ``viscous") Hamilton-Jacobi equations of the form 
\begin{equation}
\label{e.VHJ}
u^\ep_t -\ep \tr\left(A\left(\frac x\ep \right)D^2u^\ep\right)+ H\left(Du^\ep, \frac x\ep\right) = 0 \quad \mbox{in} \ \Rd \times (0,\infty).
\end{equation}
Here $\ep > 0$ is a small parameter which we will send to zero, the Hamiltonian $H=H(p,y)$ is convex and coercive in $p$ and $A(y)$ is a diffusion matrix which is possibly degenerate. In addition, the coefficients $H$ and $A$ are \emph{random fields}, that is, sampled by a probability measure on the space of all such coefficients. 

\smallskip

The basic \emph{qualitative} result concerning the stochastic homogenization of~\eqref{e.VHJ} states roughly that, under appropriate assumptions on the probability measure~$\P$ (typically that $\P$ is \emph{stationary} and \emph{ergodic} with respect to translations on $\Rd$), there exists a deterministic, convex \emph{effective Hamiltonian}~$\overline H$ such that the solutions $u^\ep$ of~\eqref{e.VHJ} converge uniformly as $\ep \to 0$, with probability one, to the solution of the effective equation
\begin{equation}\label{HJh}
u_t + \overline H(Du) = 0 \quad \mbox{in} \ \Rd \times (0,\infty).
\end{equation}
A formulation of this result was first proved independently by Souganidis~\cite{S} and Rezakhanlou and Tarver~\cite{RT} for first order equations Hamilton-Jacobi equations and later extended to the viscous setting by Lions and Souganidis \cite{LS2} and by Kosygina, Rezakhanlou and Varadhan \cite{KRV}. Generalizations as well as new proofs of these results, using methods that are closer to our perspective in this paper, appeared in~\cite{ASo1,ASo3,AT2}. 

\smallskip

The first complete quantitative homogenization results for Hamilton-Jacobi equations in the stochastic setting were presented in~\cite{ACS}. Previous results in this direction were obtained by Rezakhanlou~\cite{R}, who gave structural conditions on Hamilton-Jacobi equations in dimension $\d=1$ in which a central limit theorem holds, and by Matic and Nolen \cite{MN}, who proved a estimate for the random part of the error for a particular class of equations. In each of these papers, the main assumption quantifying the ergodicity of~$\P$ is a finite range dependence hypothesis in the spatial dependence of the coefficients: this roughly means that there is a length scale $\ell>0$ such that the random elements $(A(x ),H(p,x ))$ and $(A(y ),H(q,y ))$ are independent whenever $|x-y|\geq \ell$. 

Each of the quantitative homogenization results mentioned in the previous paragraph concerned first-order equations, that is, the case that $A\equiv 0$. Working with a general degenerate diffusion matrix $A$ is much more difficult from the point of view of homogenization. On the one hand, even if $A$ is uniformly elliptic there is no useful regularizing effect, since we send $\ep\to 0$. On the other hand, the presence of $A$ makes the dependence of the solutions on the coefficients much more complicated, as it destroys the finite speed of propagation property possessed by first-order Hamilton-Jacobi equations. The latter property is essentially what ``localizes" the problem and allows one to apply the independence property, and it is this idea that played a crucial role in~\cite{ACS}. 

\smallskip

In this paper, we develop a quantitative theory of stochastic homogenization for~\eqref{e.VHJ} with a general degenerate diffusion matrix $A$. In particular, we prove an algebraic convergence rate, with overwhelming probability, as $\ep\to 0$. Roughly, we show that there exists an exponent $\alpha\in (0,1)$, which is given explicitly, such that
$$\Prob\left[ \sup_{(x,t)\in B_T\times [0,T]} |u^\ep(x,t )-u(x,t)|\geq \ep^\alpha\right] \lesssim \exp\left( -\ep^{-\alpha} \right). $$
As in~\cite{ACS}, our arguments are inspired by probability techniques developed in first passage percolation by Kesten~\cite{K2},  Alexander~\cite{A} and Zhang \cite{Z} and in the study of the fluctuations of the Lyapunov exponents for Brownian motion in Poissonian potentials, developed by Sznitman~\cite{SzEE} and by W{\"u}thrich~\cite{W}. In fact, perhaps the closest previous work to ours is that of Sznitman~\cite{SzEE}, who proved a special case of our main result, Theorem~\ref{mpEE}. See the discussion following the statement of Theorem~\ref{mpEE}, below. 

\smallskip

Viscous Hamilton-Jacobi equations like~\eqref{e.VHJ} with random coefficients arise in particular in the study of large deviations of diffusions in random environments. Although we do not explore this topic in detail here, the main results in this paper imply quantitative bounds on the rate of the large deviations principle.

\subsection{Hypotheses}
\label{ss.hypos}
We proceed with the precise formulation of our results, beginning with the definition of the probability space $\Omega$, which is taken to be ``the set of all coefficients" for the equation~\eqref{e.VHJ} subject to some particular structural conditions. We then define a random environment by putting a probability measure on this set. Throughout the paper, we fix the parameters $q > 1$, $n\in \N^*$ and $\Lambda \geq1$. 

\smallskip

We consider diffusion matrices $A:\Rd\to \Sd$ (here $\Sd$ denotes the set of $d$-by-$d$ real symmetric matrices) which have Lipschitz continuous square roots, that is, we assume
\begin{equation} \label{e.Adef}
A = \frac12\Sigma^t \Sigma, \qquad \mbox{($\Sigma^t:=$ \ the transpose of $\Sigma$)}
\end{equation}
for a matrix-valued function $\Sigma :\Rd \to \R^{n\times d}$ which satisfies, for every $y,z\in \Rd$,
\begin{equation}\label{e.sigbnd}
\frac12\left| \Sigma(y)\right|^2 \leq \Lambda
\end{equation}
and
\begin{equation}\label{e.siglip}
\left| \Sigma(y) - \Sigma(z) \right| \leq \Lambda |y-z|.
\end{equation}
The Hamiltonian $H:\Rd \times \Rd \to \R$ is required to satisfy the following: for every $y\in \Rd$,
\begin{equation}\label{e.Hconvex}
p \mapsto H(p,y) \quad \mbox{is convex.}
\end{equation}
For every  $p, \tilde p\in\Rd$ and $y ,z \in \Rd$,
\begin{equation}\label{e.Hsubq}
\frac{1}{\Lambda} |p|^{q} -\Lambda \leq H(p,y) \leq \Lambda |p|^q + \Lambda,
\end{equation}
\begin{equation}\label{e.HsubqLip}
\left| H(p,y) - H(p,z) \right| \leq \Lambda \big( |p|^q +1  \big) |y-z|, 
\end{equation}
and
\begin{equation}\label{e.HsubqDp}
\left| H(p,y) - H(\tilde p,y) \right| \leq \Lambda \big( |p| + |\tilde p| + 1 \big)^{q-1} |p-\tilde p|. 
\end{equation}
Finally, we require the following additional structural condition: for every $p,y\in \Rd$, 
\begin{equation} \label{e.imposition}
H(p,y) \geq H(0,y) \quad \mbox{and} \quad \sup_{z\in \Rd} H(0,z) = 0.
\end{equation}
We denote by $\Omega$ the set of ordered pairs $(\Sigma,H)$ of functions as described above:
\begin{equation*} \label{}
\Omega:= \big\{ (\Sigma,H) \,:\, \mbox{$\Sigma$ and $H$ satisfy~\eqref{e.sigbnd},~\eqref{e.siglip}~\eqref{e.Hconvex},~\eqref{e.Hsubq},~\eqref{e.HsubqLip},~\eqref{e.HsubqDp} and~\eqref{e.imposition}}\big\}.
\end{equation*}
Throughout, we take $A$ to be the $\Sd$--valued random field on $\Omega$ defined by~\eqref{e.Adef}. We endow~$\Omega$ with the following $\sigma$--algebras. For each Borel set $U \subseteq \Rd$, we define  $\F(U)$ by:
\begin{multline*} \label{}
\F(U):= \mbox{$\sigma$--algebra generated by the maps $(\Sigma,H)\mapsto \Sigma(y)$ and $(\Sigma,H) \mapsto H(p,y)$,} \\ \mbox{with $p\in \Rd$ and $y\in U$.}
\end{multline*}
The largest of these $\sigma$--algebras is denoted by $\F:= \F(\Rd)$.

\smallskip

We model the random environment by a given probability measure $\P$ on $(\Omega,\F)$, which is assumed to be \emph{stationary} with respect to translations and have a \emph{finite range of dependence}. The expectation with respect to $\P$ is denoted by $\E$. To state the stationary assumption, we let $\{ \tau_z \}_{z\in\Rd}$ be the group action of translation on $\Omega$, that is, for each $z\in \Rd$, we define the map $\tau_z:\Omega\to \Omega$ by
\begin{equation*} \label{}
\tau_z (\Sigma,H) := \left(\tau_z\Sigma, \tau_z H\right), \quad \mbox{with} \ \  (\tau_z\Sigma)(y) := \Sigma(y+z) \ \  \mbox{and} \ \  (\tau_zH)(p,y) := H(p,y+z).
\end{equation*}
We extend the action of translation to $\F$ by defining $\tau_z F$, for each $F \in \F$, by 
\begin{equation*} \label{}
\tau_zF:= \big\{ \tau_z\omega\,:\ \omega \in F\big\}.
\end{equation*}
With the above notation, the hypotheses on $\P$ are that 
\begin{equation} \label{e.stat}
\mbox{for every $y\in \Rd$ and $F \in \F$,} \qquad \P\left[ \tau_yF\right] = \P\left[ F \right]
\end{equation}
and 
\begin{multline} \label{e.frd}
\mbox{for all Borel sets $U,V \subseteq \Rd$ \ such that \ $\dist(U,V) \geq 1$,} \\ \mbox{$\F(U)$ and $\F(V)$ are $\P$--independent.}
\end{multline}
Here $\dist(U,V) := \inf_{x\in U, \, y\in V} |x-y|$. The first assumption~\eqref{e.stat} states that the statistics of the coefficients are the same everywhere in $\Rd$. The second assumption~\eqref{e.frd} is a quantitative ergodicity assumption, which requires independence, in well-separated regions of space, of coefficients sampled by~$\P$.

\smallskip
 
If $X$ is a random variable, then we may write $X(\omega)$ if we wish to display its dependence on $\omega \in\Omega$ explicitly. However, we avoid this unless it is required for clarity. For instance, we usually write $u^\ep(x,t)$ for the solution of~\eqref{e.VHJ} (e.g., as in the statement of Theorem~\ref{p.cvuep}), but we sometimes use the notation $u^\ep(x,t,\omega)$ for the same quantity. 

\smallskip

With the important exception of~\eqref{e.imposition}, the structural hypotheses on the coefficients comprise a fairly general model of a viscous Hamilton-Jacobi equation. On the other hand,~\eqref{e.imposition} is rather restrictive: e.g., it is not satisfied by a Hamiltonians such as $H(p,y) = |p|^2 - b(y) \cdot p$ where $b:\Rd \to \Rd$ is a random vector field. It would be very interesting if quantitative homogenization results like the ones presented here could be proved without~\eqref{e.imposition}. We expect this to be quite difficult and that the statement of any such results would likely be weaker. Indeed, and as we will see, the hypothesis plays an essential role in the analysis of the metric problem by forcing some monotonicity in the level sets of the solutions. Without such monotonicity, the problem becomes much more complicated. Note also that~\eqref{e.imposition} is natural from the control theoretic viewpoint: see the discussion in~\cite{ACS} for the first-order case, where the same assumption is made.
 
\subsection{Quantitative homogenization of the metric problem}
\label{ss.mpEE}

As shown in~\cite{ASo3,AT2}, the heart of the modern theory of stochastic homogenization of Hamilton-Jacobi equations is the so-called \emph{metric problem}. For $\mu > 0$ and $x\in \Rd$, we consider the unique solutions $m_\mu(\cdot,x) \in C^{0,1}_{\mathrm{loc}} (\Rd)$ of 
\begin{equation}
\label{e.met}
\left\{ \begin{aligned}
& -\tr\left( A(y) D^2m_\mu(\cdot,x) \right) +H(Dm_\mu(\cdot,x),y) = \mu & \mbox{in} & \ \Rd\setminus \overline B_1(x), \\
& m_\mu(\cdot,x) = 0 & \mbox{on} & \ \overline B_1(x). 
\end{aligned} \right.
\end{equation}
The problem~\eqref{e.met} inherits its name from the subadditivity of its solutions (which also explains its usefulness) and the fact that $m_\mu(y,z)$ has a probabilistic interpretation roughly as a cost of transporting a particle from $y$ to $z$, under a certain cost functional corresponding to the coefficients. It can be shown in more generality than we consider here (see~\cite{AT2}) that the solutions~$m_\mu$ of~\eqref{e.met} have deterministic limits: for every $\mu > 0$, there exists a positively homogeneous, convex function $\overline m_\mu\in C(\Rd)$ such that 
\begin{equation} \label{e.methomog}
\P \left[ \sup_{R>0} \, \sup_{y,z\in B_R} \, \lim_{t\to \infty} \, \frac{1}t \left| m_\mu(ty,tz) - \overline m_\mu(ty-tz) \right| = 0 \right] = 1. 
\end{equation}
As it turns out~(c.f.~\cite{AT2}), the limit~\eqref{e.methomog} is strong enough to imply a full homogenization result for viscous Hamilton-Jacobi equations. The effective Hamiltonian $\overline H$ can be expressed in terms of $\overline m_\mu$ by
\begin{equation} \label{e.Hbarmet}
\overline H(p) = \inf \left\{ \mu >0  \, : \, \mbox{for every} \ y\in \Rd, \ \overline m_\mu(y) \geq p\cdot y   \right\}.
\end{equation}
That is, $\overline m_\mu$ describes the $\mu$--sublevel set of $\overline H$ because, as expected, it is the solution of the homogenized metric problem. See the next section for the definition and basic properties of~$m_\mu$.

As~\eqref{e.methomog} is of fundamental importance to the qualitative homogenization theory, quantifying it is a central task  for a quantitative theory. This is our first main result, which is actually the principal result of the paper and contains most of the difficulty in building a quantitative theory of~\eqref{e.VHJ}. The rest of the main results, presented in Theorems~\ref{p.deltaGlobal} and~\ref{p.cvuep} below, quantifies how the metric problem controls the solutions of the approximate cell problem and of~\eqref{e.VHJ}. These are more straightforward to obtain, since they follow from deterministic comparison (i.e., pure PDE) arguments.  

\begin{thm} \label{mpEE} Fix $\mu_0\geq 1$. There exists a constant $C> 0$, depending only on $(d,q,\Lambda,\mu_0)$ such that, for every $0 < \mu \leq \mu_0$, $\lambda > C\mu^{-2}$ and $y\in \R^d$,
\begin{equation}\label{easy}
\Prob\Big[\, m_\mu(y,0) - \overline m_\mu(y) \leq - \lambda \Big] \leq \exp\left(-\frac{\mu^4 \lambda^2}{C |y|} \right),
\end{equation}
and, if 
\begin{equation}\label{lamb-cond}
\lambda \geq C\left(\frac{|y|^{\frac23}}{\mu^{2}} + \frac{|y|^\frac13}{\mu^{4}} \right)  \log\left(2+\frac{|y|}{\mu}\right),
\end{equation}
then
\begin{equation}\label{hard}
\Prob\Big[\, m_\mu(y,0) - \overline m_\mu(y) \geq \lambda \Big] \leq \exp\left(-\frac{\mu^4 \lambda^2}{ C |y|} \right).
\end{equation}
\end{thm}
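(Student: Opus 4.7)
The plan is to decompose
\[ m_\mu(y,0) - \overline m_\mu(y) = \big( m_\mu(y,0) - \E m_\mu(y,0) \big) + \big( \E m_\mu(y,0) - \overline m_\mu(y) \big) \]
and treat the two pieces separately. The second summand is \emph{nonnegative}: by stationarity and the subadditivity of the metric problem, Fekete's lemma gives $\overline m_\mu(y) = \inf_{n\in\N^*} n^{-1}\E m_\mu(ny,0) \leq \E m_\mu(y,0)$. Consequently the lower tail \eqref{easy} will follow from a concentration estimate alone, whereas the upper tail \eqref{hard} requires in addition an upper bound on the deterministic bias $\E m_\mu(y,0) - \overline m_\mu(y)$, which is where the real work is.

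For concentration I would use a martingale method based on the finite range of dependence \eqref{e.frd}. Tile $\Rd$ by unit cubes $\{Q_k\}_{k\geq 1}$ ordered (say) by distance to the segment $[0,y]$, set $\F_k := \F(Q_1\cup\cdots\cup Q_k)$, and consider the Doob martingale $M_k := \E[m_\mu(y,0)\mid \F_k]$. The key input is an \emph{influence} bound: resampling the coefficients in a single cube $Q_k$ alters $m_\mu(y,0)$ by at most $C\mu^{-2}$, and only $O(|y|)$ cubes can exert nontrivial influence. Once this is in hand, Azuma--Hoeffding produces a sub-Gaussian tail of order $\exp(-c\mu^4\lambda^2/|y|)$, which is precisely \eqref{easy}.

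To bound the bias from above I would use a chaining argument of the type pioneered in first passage percolation by Alexander and Zhang and adapted by Sznitman to Lyapunov exponents. Introduce an intermediate scale $L\in[1,|y|]$ and a deterministic chain $0 = x_0, x_1, \ldots, x_N = y$ with $|x_i - x_{i-1}|\approx L$ and $N\approx |y|/L$. Subadditivity gives
\[ m_\mu(y,0) \leq \sum_{i=1}^N m_\mu(x_i, x_{i-1}). \]
Applying the concentration step at scale $L$ to each summand and taking a union bound yields, with probability $1 - \exp(-c\mu^4\lambda^2/|y|)$,
\[ m_\mu(y,0) \leq N\, \E m_\mu(L e,0) + \mathrm{error}, \qquad e := y/|y|, \]
where the error collects the concentration fluctuation at scale $L$ and a logarithmic factor from the union bound over the $N$ links. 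Since $\E m_\mu(Le,0)\geq \overline m_\mu(Le)$, the resulting inequality says that the bias at scale $|y|$ is controlled by $|y|/L$ times the bias at scale $L$ plus the above error, so one iterates. Optimizing $L$ (balancing the two contributions $(|y|L)^{1/2}\mu^{-2}$ and $|y|/L$) should reproduce the two terms $|y|^{2/3}\mu^{-2}$ and $|y|^{1/3}\mu^{-4}$ in \eqref{lamb-cond}, together with the logarithm coming from the union bound, and thereby deliver \eqref{hard}.

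The main obstacle I anticipate is the influence estimate underlying the martingale step. In contrast with the first-order case treated in \cite{ACS}, a degenerate diffusion $A$ destroys finite speed of propagation, so in principle a local resampling could affect $m_\mu(y,0)$ far from its support via displaced optimal controlled trajectories. Turning this into a quantitative, localized influence bound is where the structural hypothesis \eqref{e.imposition} must be used in an essential way: it forces a monotonicity of the sublevel sets of $m_\mu$, yielding the geometric rigidity needed to build $\overline m_\mu$-barriers and to quantify how much a local perturbation can ``spread.'' Once this PDE input is secured, the probabilistic portion is the by now classical Kesten--Alexander--Zhang--Sznitman bootstrap.
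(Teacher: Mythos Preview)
Your decomposition into fluctuation plus bias, and your observation that $\E m_\mu(y,0)\geq \overline m_\mu(y)$ by (approximate) subadditivity so that \eqref{easy} reduces to concentration, match the paper exactly. The difficulties lie in how you propose to establish each piece.

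For the fluctuation estimate, the cube-by-cube Doob martingale you describe is the Kesten scheme, and it rests on the claim that ``only $O(|y|)$ cubes can exert nontrivial influence.'' In first passage percolation this is furnished by the geodesic; with a nonzero diffusion there is no deterministic optimal path, the sublevel sets $\Re_{\mu,t}$ have volume of order $|y|^{d}$, and every cube can influence $m_\mu(y,0)$. You flag this yourself as the main obstacle, but you do not propose a mechanism to recover the $O(|y|)$ count, and hypothesis \eqref{e.imposition} alone does not obviously provide one. The paper circumvents the problem entirely by switching to the Zhang filtration: one conditions on the environment inside the moving sublevel set $\Re_{\mu,t}$ itself, so the martingale automatically has $O(|y|)$ steps with bounded increments. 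The price is that $\Re_{\mu,t}$ is \emph{not} measurable with respect to the environment it encloses (again because of the diffusion); the paper repairs this via a comparison estimate (Lemma~\ref{l.seige}) showing that $m_\mu\approx m_\mu^U$ inside its own sublevel sets, from which a measurable surrogate front $\Se_{\mu,t}$ is built. This PDE localization lemma, not an influence bound for cubes, is the engine of the argument.

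Your bias argument has a more fundamental gap. The chain $m_\mu(y,0)\leq \sum m_\mu(x_i,x_{i-1})$ together with concentration yields, after expectation, $B(|y|)\leq (|y|/L)B(L)+\text{error}$, where $B(t):=\E m_\mu(te,0)-\overline m_\mu(te)$. This is merely a restatement of subadditivity of the mean and says only that $B(t)/t$ is (approximately) nonincreasing; iterating it down to $L=O(1)$ returns the trivial bound $B(|y|)\leq C|y|$. To get a rate one needs an inequality in the \emph{opposite} direction, i.e.\ an approximately \emph{superadditive} companion quantity. The paper follows Alexander and introduces $g_{\mu,\sigma}(t)=-\sigma^{-1}\log\sum_{z\in\widehat H_t}\E e^{-\sigma m_\mu(z,0)}$, which approximates $\E[m_\mu(H_t,0)]$ and satisfies $g_{\mu,\sigma}(s+t)\gtrsim g_{\mu,\sigma}(s)+g_{\mu,\sigma}(t)$ because any path to $H_{s+t}$ must first cross $H_t$ (Lemma~\ref{superadd}). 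Hammersley--Fekete then yields a rate for the half-space bias, and a convex-geometry step (Lemma~\ref{cmte-minE} and the convex-hull argument) transfers this to points, producing the exponents $|y|^{2/3}$ and $|y|^{1/3}$. Your ``balancing'' of $(|y|L)^{1/2}\mu^{-2}$ against $|y|/L$ is not the source of these exponents.
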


To see how Theorem~\ref{mpEE} quantifies~\eqref{e.methomog}, 
note that (at least for $\mu>0$) the right sides of~\eqref{easy} and~\eqref{hard} are very small for $\lambda \gg |y|^{1/2}$. The first inequality therefore implies roughly that, for $t \gg 1$, 
\begin{equation*} \label{}
\frac{1}t \left( m_\mu(ty,0) - \overline m_\mu(ty) \right) \ll - t^{-1/2} \quad \mbox{has overwhelmingly small probability.}
\end{equation*}
To interpret the second statement, note that for fixed $\mu>0$ the first term in the parentheses dominates for large $|y|$. We obtain roughly that, for every $\alpha >0$ and $t\gg 1$,
\begin{equation*} \label{}
\frac{1}t \left( m_\mu(ty,0) - \overline m_\mu(ty) \right) \gg  t^{-1/3+\alpha} \quad \mbox{has overwhelmingly small probability.}
\end{equation*}
Note that~\eqref{easy} and~\eqref{hard} both degenerate as $\mu \to 0$. This is necessarily so: see the discussion in~\cite{ACS}, which encounters the same phenomenon in the first-order case. 

\smallskip

The only previous result like Theorem~\ref{mpEE} in the case $A\not\equiv 0$ is due to Sznitman~\cite{SzEE}. The main result of his paper is an error estimate for ``metrics" associated to the long-time behavior of Brownian motion in the presence of truncated Poissonian obstacles. These distance functions, translated into our language (see~\cite{AT2} for the precise translation and more details on this connection), are nothing other than the functions $m_\mu$ in the special case that $A(x) = \frac12 I_d$ (here $I_d$ is the identity matrix) and the Hamiltonian has the form $H(p,y) = \frac12 |p|^2 - V(y)$, with $V$ the truncated Poissonian potential. Compared to~\cite{SzEE}, we are able to consider more general potentials: for example, the potential $V$ in~\cite{SzEE} is generated by radially symmetric bump function, here we do not need such symmetry (but we pay a price with a slightly weakened estimate). The argument in~\cite{SzEE} achieves the localization property due to special properties of Brownian motion and a probabilistic argument which analyzes the sample paths of the diffusion. 

\smallskip

In view of~\eqref{e.Hbarmet}, Theorem~\ref{mpEE} gives explicit error bounds for a numerical method for computing the effective nonlinearity $\overline H$. The task is reduced to the numerical computation of the maximal subsolutions $m_\mu$, and this can be further reduced to a problem on a finite domain by the results in Section~\ref{s.fluctuation}, see in particular Lemma~\ref{l.seige}. 

\smallskip

The proof of Theorem~\ref{mpEE} is broken into two main steps: first, an estimate on the likelihood that the random variable $m_\mu(y,0)$ is far from its mean $\E \left[ m_\mu(y,0) \right]$, which is proved in Section~\ref{s.fluctuation}, and second, an estimate of the difference between $\E \left[ m_\mu(y,0) \right]$ and $\overline m_\mu(y,0)$, which is proved in Section~\ref{s.bias}. The proof of Theorem~\ref{mpEE} itself is given at the beginning of Section~\ref{s.bias}.

\subsection{Quantitative results on the approximate cell problem} 

After proving Theorem~\ref{mpEE}, the rest of the paper is concerned primarily with transferring the error estimates for the metric problem to error estimates for the time-dependent initial-value problem. As an intermediate step, we consider the time-independent \emph{approximate cell problem}: for each $\delta > 0$ and $p\in \Rd$, we consider the unique solution $v^\delta(\cdot,p) \in C^{0,1}(\Rd)$ of
\begin{equation}\label{e.cellp}
\delta v^\delta(y, p) -{\rm tr}\left(A(y )D^2v^\delta(y,p)\right)+H(Dv^\delta(y , p)+p, y )=0 \qquad {\rm in }\; \R^d.
\end{equation}
Results concerning the well-posedness of~\eqref{e.cellp} can be found in~\cite{AT}. This problem arises naturally in the qualitative theory of homogenization of~\eqref{e.VHJ}, and in fact the general homogenization theorem is equivalent to the statement that, for every $p\in\Rd$, 
\begin{equation} \label{e.cellhomog}
\P \left[ \forall p\in \Rd, \ \limsup_{\delta\to 0} \left| \delta v^\delta(0, p)+ \overline H(p) \right| = 0 \right] = 1.
\end{equation}
Our next result, Theorem~\ref{p.deltaGlobal} below, is a quantitative version of~\eqref{e.cellhomog}.

\smallskip

The rate of convergence of the limit in~\eqref{e.cellhomog} cannot be quantified without further assumptions on the law of the Hamiltonian. Indeed, it turns out that the mixing properties of the environment control the rate at which $\delta v^\delta(0,p) + \overline H(p)$ converges to zero \emph{from above} for all $p\in\Rd$, as well as the rate at which this quantity converges to zero from below for $p$'s satisfying $\overline H(p) > 0$. However, the rate at which we can expect $\delta v^\delta(0,p) + \overline H(p)$ to converge to zero for $p\in \intr \{ \overline H=0\}$ is determined by the behavior of $H$ near its maximum: in view of~\eqref{e.imposition}, what is important is how large the probability $\P\left[ H(0,0) > -\delta \right]$ is for $0 < \delta \ll 1$. This phenomenon was already encountered in~\cite{ACS}, where an explicit example was given showing that the rate of the limit may be arbitrarily slow for $p$ belonging to the ``flat spot," defined as the interior of the level set $\{ \overline H=0 \}$.

\smallskip

For simplicity, we quantify~\eqref{e.cellhomog} under an assumption that rules out the existence of such a flat spot, although an inspection of the arguments in Section~\ref{s.cellpb} yield more precise results (e.g., estimates for $p$'s not belonging to the flat spot without this assumption). 

\begin{thm}
\label{p.deltaGlobal}
Assume that the lower bound in~\eqref{e.Hsubq} is replaced by
\begin{equation}\label{e.ConstH}
 H(p,x)\geq \frac{1}{\Lambda}|p|^q.
\end{equation}
Fix $\xi \geq 1$. Then there exists $C>0$, depending on $(d,q,\Lambda,\xi)$, such that, for every $p\in \Rd$ with $|p|\leq \xi$ and $\delta,\lambda>0$ satisfying
$$
\lambda \geq  C\delta^{\frac{1}{7+6q}} \left( 1+  \left|\log \delta \right|\right),
$$
we have
\begin{equation}\label{e.thmdeltaGlobal}
\Prob\Big[\, \left|\delta v^\delta(0 ,p) + \overline H(p) \right|\geq  \lambda \Big] 
\leq  C \lambda^{-\d}\left(\lambda^{-2\d}+\delta^{-2\d}\right)\exp\left(-\frac{\lambda^{4(1+q)}}{C \delta} \right).
\end{equation}
\end{thm}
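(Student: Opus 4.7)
The plan is to deduce Theorem~\ref{p.deltaGlobal} from Theorem~\ref{mpEE} by a deterministic PDE argument organized around the identity~\eqref{e.Hbarmet}, which characterizes $\overline H(p)$ as the smallest $\mu>0$ with $\overline m_\mu(y)\geq p\cdot y$ for every $y\in\R^d$. The first step is a quantitative ``gap'' for this identity. Under the strengthened coercivity~\eqref{e.ConstH}, $\overline H$ is convex, coercive with $\overline H(q)\geq|q|^q/\Lambda$, and locally Lipschitz with constants controlled by $(\xi,q,\Lambda)$; combining the representation $\overline m_\mu(y)=\sup\{q\cdot y:\overline H(q)\leq\mu\}$ with a test $q=p+ty/|y|$ yields, for $\mu=\overline H(p)+\theta$ and small $\theta>0$,
\[
\overline m_\mu(y)-p\cdot y \;\geq\; c_\xi\, g_q(\theta,p)\,|y|\qquad\text{for all }y\in\R^d,
\]
where $g_q(\theta,p)\sim\theta$ when $|p|$ is bounded away from $0$, and degenerates to $g_q(\theta,p)\sim\theta^{1/q}$ in the worst case $|p|\to 0$. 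A symmetric deficit estimate holds on a wedge of directions for $\mu=\overline H(p)-\theta$.

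Next, the gap is translated into sub- and supersolutions of~\eqref{e.cellp} via the identity
\[
\delta w_{\mu,x_0}+H(Dw_{\mu,x_0}+p,y)-\tr\!\bigl(A\,D^2w_{\mu,x_0}\bigr)\;=\;\delta\bigl(m_\mu(y,x_0)-p\cdot(y-x_0)\bigr),
\]
valid in $\R^d\setminus\overline B_1(x_0)$ for $w_{\mu,x_0}(y):=m_\mu(y,x_0)-p\cdot(y-x_0)-\mu/\delta$, so that $w_{\mu,x_0}$ is a supersolution (respectively, a subsolution) precisely where $m_\mu(y,x_0)\geq p\cdot(y-x_0)$ (resp.\ $\leq$). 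For the upper bound $\delta v^\delta(0,p)\leq -\overline H(p)+\lambda$, I would take $\mu=\overline H(p)+c\,g_q^{-1}(\lambda,p)$ and a ball $B_R$ with $R$ chosen so that the gap dominates the tolerated metric error on $\partial B_R$, and then use Theorem~\ref{mpEE} in its union-bounded form over an $O(R^d)$-net (upgraded to uniform control by the equi-Lipschitz regularity of $m_\mu$ and $\overline m_\mu$) to show that, with overwhelming probability, $m_\mu(y,x_0)\geq p\cdot(y-x_0)$ on $B_R\setminus\overline B_1(x_0)$ and $w_{\mu,x_0}\geq v^\delta$ on $\partial B_R$ (using the trivial a~priori bound $|v^\delta|\leq C_\xi/\delta$). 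After a cutoff on $\overline B_1(x_0)$, $w_{\mu,x_0}$ becomes a globally admissible supersolution of~\eqref{e.cellp} on $B_R$, and comparison gives $\delta v^\delta(0,p)\leq\delta w_{\mu,x_0}(0)=-\mu+O(\delta)$. The reverse inequality is proved by a symmetric subsolution construction with $\mu=\overline H(p)-c\,g_q^{-1}(\lambda,p)$ when positive, and is otherwise automatic from $v^\delta\leq 0$ (which holds by comparison with the constant $0$, since~\eqref{e.ConstH} and~\eqref{e.imposition} together force $H(0,y)\equiv 0$).

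The union-bound step over $B_R$ is responsible for the polynomial prefactor $\lambda^{-d}(\lambda^{-2d}+\delta^{-2d})$ in~\eqref{e.thmdeltaGlobal}. The exponential rate $\exp(-\lambda^{4(1+q)}/(C\delta))$ arises from plugging $|y|\sim R$, $\mu\sim\overline H(p)+\text{gap}$, and metric-problem tolerance $\lambda_m\sim\text{gap}$ into the exponent $\mu^4\lambda_m^2/|y|$ of Theorem~\ref{mpEE}, then optimizing the free parameters $\theta$ and $R$ subject to (i)~$\lambda_m$ being smaller than the gap, (ii)~the boundary correction on $\partial B_R$ not spoiling $w(0)$, and (iii)~the admissibility condition~\eqref{lamb-cond}. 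The main technical obstacle is (ii): because $m_\mu(\cdot,x_0)$ solves the metric PDE only outside $\overline B_1(x_0)$, the cutoff/matching needed to make $w_{\mu,x_0}$ a \emph{global} super- or subsolution forces $R$ to be substantially larger than $1/\delta$, and the resulting amplification of the required gap through the worst-case modulus $g_q(\theta,p)\sim\theta^{1/q}$ is ultimately what yields the large power $4(1+q)$ on $\lambda$ in the exponent as well as the threshold $\lambda\geq C\delta^{1/(7+6q)}(1+|\log\delta|)$.
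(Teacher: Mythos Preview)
Your outline diverges from the paper's argument in a way that leaves a genuine gap in the hard direction.

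The supersolution part of your plan, with $\mu=\overline H(p)+\theta$, runs into trouble at the inner boundary. The function $w_{\mu,x_0}=m_\mu(\cdot,x_0)-p\cdot(\cdot-x_0)-\mu/\delta$ solves the metric equation only in $\R^d\setminus\overline B_1(x_0)$; on $\partial B_1(x_0)$ you have $w_{\mu,x_0}\approx-\mu/\delta$, while $v^\delta$ lies somewhere in $[-C/\delta,0]$, and there is no a~priori inequality $v^\delta\le-\mu/\delta$ there. The ``cutoff'' you mention does not repair this: extending $m_\mu$ by $0$ inside $\overline B_1(x_0)$ makes $w_{\mu,x_0}$ a \emph{sub}solution in that ball (since $H(0,y)\le 0$ gives $\delta w_{\mu,x_0}+H(p,y)\le -\mu+O(\delta)<0$), not a supersolution. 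Worse, even if comparison went through, it would yield $\delta v^\delta(x_0)\le -\mu<-\overline H(p)$, which is strictly stronger than the truth and hence cannot hold with high probability.

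The ``symmetric subsolution construction'' with $\mu=\overline H(p)-\theta$ is the more serious issue. For $\mu<\overline H(p)$ the point $p$ lies outside $\{\overline H\le\mu\}$, so the inequality $\overline m_\mu(y)\le p\cdot y$ holds only on a \emph{cone} of directions $y$, not on all of $\R^d$. Consequently $m_\mu(y,x_0)-p\cdot(y-x_0)$ changes sign and $w_{\mu,x_0}$ is not a subsolution on any ball $B_R$. This is exactly the obstruction that forces the paper to introduce the notion of a point of $r$-strict convexity of $\{\overline H\le\mu\}$: Lemma~\ref{l.linkrstrict} upgrades the one-sided linear gap you wrote to a \emph{two-sided quadratic} bound $0\le\overline m_\mu(e+x)-\overline m_\mu(e)-p\cdot x\le r|x|^2$ in a full neighbourhood of the distinguished direction~$e$. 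The paper then sets up the comparison differently from yours: it assumes $-\delta v^\delta(0,p)\ge\overline H(p)+\lambda$ and perturbs $v^\delta$ into a strict supersolution $\hat w$ with right-hand side $\mu+\tfrac{\lambda}{4}$ on a bounded domain $V\subseteq B(y_0,R/\delta)$, while $m_\mu$ is centred at the \emph{faraway} point $y_0-se$ so that the hole $B_1(y_0-se)$ never meets~$V$. Comparison of $\hat w$ and the shifted $m_\mu$ on $V$ then yields an event controlled by Theorem~\ref{mpEE}. Finally, a general $p$ with $\overline H(p)>0$ is handled by the quantitative Straszewicz theorem (Lemma~\ref{l.quanstrasz}) and Carath\'eodory, writing $p$ as close to a convex combination of $r$-strictly-convex boundary points and using the Lipschitz dependence~\eqref{vdpdepp}.

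In short: your linear gap handles at best one inequality (and even there the hole $\overline B_1(x_0)$ is not disposed of), while the bound on $\Prob[-\delta v^\delta\ge\overline H(p)+\lambda]$ genuinely requires the $r$-strict convexity/Straszewicz machinery that is absent from your outline.
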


Notice that Theorem~\ref{p.deltaGlobal} quantifies the limit~\eqref{e.cellhomog} since it implies roughly that, for every $\alpha > 0$ and sufficiently small $\delta >0$,
\begin{equation*} \label{}
\left|\delta v^\delta(0,p) + \overline H(p) \right| \gg  \delta^{\frac1{7+6q}+\alpha} \quad \mbox{has overwhelmingly small probability.}
\end{equation*}
While the exponent $(7+6q)^{-1}$ is far from optimal, this quantifies the convergence with an algebraic rate.

The proof of Theorem~\ref{e.thmdeltaGlobal} (given at the end of~Section~\ref{s.cellpb}) differs substantially from the analogue in~\cite{ACS}. The idea, as in~\cite{ACS}, is to link the metric problem and the approximate cell problem and then apply Theorem~\ref{mpEE}. Howeover, the connection between the two problems relies on comparison and convex geometry arguments recently introduced in~\cite{AT2}, which account for the first proof of qualitative homogenization for~\eqref{e.VHJ} based only on the metric problem. Previous arguments were either for first--order equations~\cite{ASo3}, relied on representation formulae~\cite{KRV,LS2}, or were based on weak convergence techniques~\cite{LS3,ASo1}.

\subsection{Quantitative results on the time dependent problem} 
The third and final result we present in the introduction relates to the time-dependent initial-value problem: we study the convergence rate for the solution $u^\ep=u^\ep(x,t)$ of 
\begin{equation} \label{HJq}
\left\{ \begin{aligned}
& u^\ep_t -\ep \tr \left(A\left(\frac{x}{\ep} \right)D^2u^\ep\right)+ H\left(Du^\ep,\frac x\ep \right) = 0 & \mbox{in} & \ \Rd \times (0,\infty), \\
& u^\ep(x,0)= g(x) \quad \mbox{in} \ \Rd
\end{aligned}\right.
\end{equation}
to the solution $ u$ of the homogenized problem
\begin{equation} \label{HJqhom}
\left\{ \begin{array}{l}
 u_t + \overline H(D u) = 0 \quad \mbox{in} \ \Rd \times (0,\infty),\\
 u(x,0)= g(x) \quad \mbox{in} \ \Rd 
\end{array}\right.
\end{equation}
Here $g\in C^{0,1}(\Rd)$ is given. Results implying the well-posedness of~\eqref{HJq} and~\eqref{HJqhom} can be found in~\cite{AT}.

\begin{thm}[Error estimate for the time dependent problem]\label{p.cvuep}
Assume the hypotheses of Theorem~\ref{e.ConstH}. Let $g\in C^{1,1}(\R^d)$. Then there exists a constant $C>0$, depending only on $(d,q,\Lambda,\| g \|_{C^{1,1}(\Rd)})$ such that, for any $\lambda,\ep>0$ satisfying
$$
\lambda \geq C\ep^{\frac{1}{17+12q}}\left(1+\left| \log \ep \right|\right)
$$
we have 
\begin{equation}\label{e.cvuep}
\Prob\Big[  \, \sup_{x\in B_T, \ t\in [0,T]} \left|u^\ep(x,t )- u(x,t)\right| \geq  \lambda \Big ] \leq  
C(\lambda\ep)^{-3\d}\exp\left(- \frac{\lambda^{\frac{11}{2}+4q}}{C\ep^{\frac12}}\right).
\end{equation}
\end{thm}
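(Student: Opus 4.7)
The plan is to transfer the error bound for the approximate cell problem (Theorem~\ref{p.deltaGlobal}) to a uniform error bound for~(\ref{HJq}) by combining a quantitative version of Evans's perturbed test function method with a union bound over a discretization of both space and gradient values.

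First I would prove a purely deterministic comparison statement: there exist $C, L>0$ depending only on $(d,q,\Lambda,\|g\|_{C^{1,1}})$ and $T$ such that for any mesoscopic scales $r,\eta\in(0,1)$, if one denotes by $\mathcal{X}\subset B_T\times[0,T]$ an $r$-grid and by $\mathcal{P}\subset B_L$ an $\eta$-net, then on the deterministic event
\[
\Lambda_0 := \Bigl\{ \max_{(x_0,t_0)\in \mathcal{X}}\ \max_{p\in \mathcal{P}}\bigl|\delta v^\delta(x_0/\ep,p)+\bar H(p)\bigr|\le \lambda_0\Bigr\},
\]
one has
\[
\sup_{B_T\times[0,T]}\bigl|u^\ep-u\bigr|\ \le\ C\bigl(\ep/\delta+r+\eta+T\lambda_0\bigr).
\]
The argument uses: (i) the regularity of $u$, namely $u$ Lipschitz in $(x,t)$ with $|Du|\le L$ and semiconcave in $x$, both obtained from the Hopf--Lax formula since $g\in C^{1,1}$ and $\bar H$ is convex and coercive; (ii) the uniform Lipschitz bound $|D_y v^\delta(\cdot,p)|\le C(1+|p|)$ and the $L^\infty$ bound $\delta\|v^\delta(\cdot,p)\|_{L^\infty}\le C$ following from coercivity of $H$; and (iii) the perturbed test function computation: given a smooth test function $\phi$ touching $u$ from above near $(x_0,t_0)$ with $D\phi\approx Du$, the function $\phi(x,t)+\ep v^\delta(x/\ep,D\phi(x_0,t_0))$ is an approximate supersolution of~(\ref{HJq}), with the right-hand side controlled by $|\delta v^\delta(x_0/\ep,p)+\bar H(p)|$ plus Lipschitz errors arising from~(\ref{e.HsubqDp}) and~(\ref{e.HsubqLip}). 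Carrying this out in a doubling-of-variables framework (as in~\cite{AT2}), which accommodates the merely semiconcave regularity of $u$ and the $\ep\tr(A\,D^2u)$ correction, yields the displayed error.

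Once this deterministic statement is in hand, I would apply Theorem~\ref{p.deltaGlobal} to each pair $(x_j/\ep,p_i)\in\mathcal{X}\times\mathcal{P}$, using stationarity to shift to the origin, and take a union bound, obtaining
\[
\Prob\bigl[\Lambda_0^c\bigr]\ \le\ C(\eta r)^{-Cd}\lambda_0^{-Cd}\exp\!\Bigl(-\frac{\lambda_0^{4(q+1)}}{C\delta}\Bigr),
\]
valid whenever $\lambda_0\ge C\delta^{1/(7+6q)}(1+|\log\delta|)$. The polynomial prefactor in $(\eta r)^{-1}$ is absorbed into the exponent at the cost of a $(1+|\log\ep|)$ factor, which is precisely the logarithmic term in the statement. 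Requiring $\ep/\delta+r+\eta+T\lambda_0\le\lambda$ and setting $\delta,r,\eta$ as suitable fractional powers of $\lambda$ reproduces the algebraic rate $\lambda\gtrsim\ep^{1/(17+12q)}$; the large denominator reflects the number of error sources that must be balanced simultaneously.

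The principal difficulty is the deterministic comparison step. The $L^\infty$ bound $\|v^\delta\|_{L^\infty}\le C/\delta$ forces the balance $\delta\gg\ep$, while the viscous correction $\ep\tr(A\,D^2u)$ and the merely semiconcave regularity of $u$ require the argument to be carried out via doubling of variables rather than classical test functions, in the spirit of~\cite{AT2}. Once this PDE input is correctly set up, the passage to the probabilistic estimate is a direct union-bound application of Theorem~\ref{p.deltaGlobal}.
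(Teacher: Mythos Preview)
Your overall architecture---a deterministic comparison lemma relating $u^\ep-u$ to $\delta v^\delta+\overline H$, followed by a union bound and Theorem~\ref{p.deltaGlobal}---is exactly the paper's strategy. The gap is in the deterministic step, and it is not merely cosmetic.

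First, your error formula $C(\ep/\delta+r+\eta+T\lambda_0)$ is too optimistic for a second-order equation. The term $\ep/\delta$ is what one obtains in the first-order setting; with a nonzero diffusion one must run the Crandall--Ishii--Lions doubling (the ``theorem on sums''), and the interaction between the penalization parameters and the matrix inequality produces instead an error of the form $\ep^{2/3}\delta^{-1/3}+\ep^{1/3}\delta^{-2/3}$ (this is Lemma~\ref{l.jzfcojjh} in the paper). Optimizing your formula with $\ep/\delta\le\lambda$ and $\lambda_0\ge C\delta^{1/(7+6q)}$ yields $\lambda\gtrsim\ep^{1/(8+6q)}$, not $\ep^{1/(17+12q)}$; the exponent you quote is precisely the one forced by the second-order penalization, so your claimed formula and your claimed rate are mutually inconsistent.

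Second, the role of the spatial grid is misconceived. In the perturbed test function argument the point at which $\delta v^\delta(\cdot/\ep,p)$ must be controlled is the maximizer $x^*$ of the doubled auxiliary functional, and this point is not at your disposal: it is determined by $\omega$ and by the penalization parameters, and lies anywhere in a macroscopic ball of radius $C/\lambda$. Hence what the deterministic argument actually requires is a bound on $\sup_{(y,p)\in B_R\times B_L}\big|\delta v^\delta(y,p)+\overline H(p)\big|$ over a \emph{microscopic} ball $B_R$ with $R\sim (\lambda\ep)^{-1}+\delta^{-1}$, not control at the sparse set $\{x_0/\ep:x_0\in\mathcal X\}$. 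Your proposed interpolation from the $r$-grid to $x^*$ costs $C\delta|x^*-x_0|/\ep$, not $Cr$, because $x\mapsto \delta v^\delta(x/\ep,p)$ has Lipschitz constant $C\delta/\ep$. In the paper the grid enters only at the probabilistic stage, as a covering of $B_R\times B_L$ for the union bound; the deterministic lemma makes no reference to a grid.

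In short: the strategy is right, but you need the full second-order doubling computation (with Ishii's lemma and the auxiliary parameters $\alpha,\theta,\sigma$) to get the correct deterministic error, and you need to state the event in terms of $\sup_{y\in B_R}$ rather than a macroscopic grid.
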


Notice that the previous theorem implies roughly that, for every $\alpha > 0$ and sufficiently small $\ep >0$,
\begin{equation*} \label{}
 \left|u^\ep(x,t )- u(x,t)\right| \gg  \ep^{\frac1{17+12q}+\alpha} \quad \mbox{has overwhelmingly small probability.}
\end{equation*}

To prove Theorem~\ref{p.cvuep}, we compare the solutions $v^\delta(\cdot,p)$ of~\eqref{e.cellp} to the solutions of~\eqref{HJq} and then apply the result of Theorem~\ref{p.deltaGlobal}. That is, the proof is a deterministic, pure PDE comparison argument which, while technical, is relatively straightforward. Such arguments appeared first in the context of periodic homogenization of first-order equations in Capuzzo--Dolcetta and Ishii~\cite{ICD}, and a similar technique is used for the proof of \cite[Theorem 4]{ACS}, also for first order HJB equations. Here the techniques required are a bit more complicated because the comparison machinery for viscosity solutions of second-order equations is more involved. The comparison argument is summarized in Lemma~\ref{l.jzfcojjh} and the proof of Theorem~\ref{p.cvuep} is given at the end of Section~\ref{s.timedep}.

\subsection{Outline of the paper} Section~\ref{Pre} is a summary of the basic (deterministic) properties of the metric problem. The proof of Theorem~\ref{mpEE} is split between an estimate of the random error (Section~\ref{s.fluctuation}) and an estimate of the non-random error (Section~\ref{s.bias}). Theorem~\ref{p.deltaGlobal} is proved in Section~\ref{s.cellpb} and Theorem~\ref{p.cvuep} in Section~\ref{s.timedep}.

\section{Basic properties of the metric problem} \label{Pre}
We review some basic facts, needed throughout the paper, concerning the equation
\begin{equation} \label{e.metbas}
-\tr\left( A(y ) D^2w \right) + H(Dw,y ) = \mu.
\end{equation}
Proofs of most of the results collected here can be found in~\cite{AT}. As the statements are deterministic, throughout this section we fix $(\Sigma,H)\in \Omega$.

The basic regularity result for coercive Hamilton-Jacobi equations is the interior Lipschitz continuity of solutions. For a proof of the following proposition with an explicit Lipschitz constant depending on the given parameters, see~\cite[Theorem 3.1]{AT}. 

\begin{prop}
\label{p.lipschitz}
Suppose that $\mu \geq 0$ and $u \in C(B_1)$ is a solution of~\eqref{e.metbas} in $B_1$. Then there exists $L_\mu>0$, depending on $(q,\Lambda)$ and an upper bound for $\mu$, such that 
\begin{equation*} \label{}
\sup_{x,y\in B_{1/2}, \, x\neq y}\, \frac{|u(x)-u(y)|}{|x-y|} \leq L_\mu. 
\end{equation*}
\end{prop}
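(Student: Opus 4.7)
The plan is to prove the interior Lipschitz estimate as a consequence of the superlinear coercivity in \eqref{e.Hsubq}, by the two-step strategy used in~\cite{AT} for coercive viscous Hamilton--Jacobi equations.

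\textbf{Step 1 (Local $L^\infty$ bound).} The lower bound $H(p,y) \geq |p|^q/\Lambda - \Lambda$ with $q>1$ permits the construction of explicit radial barriers blowing up on $\partial B_{3/4}$. For $\beta = 2/(q-1)$ and constants $A_0, B_0$ depending only on $(q,\Lambda,\mu)$, the function $w_+(x) := A_0 + B_0\bigl(\tfrac{9}{16} - |x|^2\bigr)^{-\beta}$ is a smooth supersolution of \eqref{e.metbas} on $B_{3/4}$: near $\partial B_{3/4}$, the coercive term $|Dw_+|^q/\Lambda$ dominates the trace contribution and the constant $\mu + \Lambda$, while near the origin the inequality holds thanks to the additive constant $A_0$. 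Viscosity comparison, together with the mirror construction for the lower bound, gives $\|u\|_{L^\infty(B_{3/4})} \leq C_0 = C_0(q,\Lambda,\mu)$.

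\textbf{Step 2 (Lipschitz via doubled variables).} Fix $x_0 \in B_{1/2}$ and consider on $\overline B_{1/4}(x_0)^2$ the penalized function
\begin{equation*}
\Phi(x,y) := u(x) - u(y) - L\,\omega(|x-y|) - M\bigl(|x-x_0|^2 + |y-x_0|^2\bigr),
\end{equation*}
where $\omega$ is a concave modulus with $\omega(0)=0$, $\omega'(0+)=1$ (e.g.\ $\omega(t)=1-e^{-t}$) and $M\sim C_0$ is chosen large enough to force any maximizer to lie in the interior. If for some $L = L_\mu(q,\Lambda,\mu)$ the supremum of $\Phi$ is nonpositive, then specializing $y=x_0$ and varying $x_0\in B_{1/2}$ yields the desired Lipschitz bound. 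Assume instead that the supremum is positive at $(\hat x,\hat y)$ with $\hat x \neq \hat y$: the $L^\infty$ bound gives $L\,\omega(|\hat x-\hat y|) \leq 2C_0$, so $|\hat x-\hat y|$ is forced to be small relative to $L$. Jensen--Ishii's lemma (after a standard regularization of $|x-y|$) furnishes matrices $X,Y\in\Sd$ and gradient data $p_x,p_y$ with $|p_x|,|p_y| \sim L$. Testing the matrix inequality against the columns of $\Sigma(\hat x),\Sigma(\hat y)$ and invoking the Lipschitz regularity \eqref{e.siglip} yields
\begin{equation*}
\tr\bigl(A(\hat x)X - A(\hat y)Y\bigr) \leq C\Lambda^2 L\,|\hat x-\hat y| + C\Lambda M.
\end{equation*}
Combining this with the subtracted sub- and super-solution inequalities, and using the coercivity \eqref{e.Hsubq} together with the regularity bounds \eqref{e.HsubqLip}--\eqref{e.HsubqDp} and the convexity \eqref{e.Hconvex}, one arrives at an inequality of the form $c\,L^q \leq C_*\bigl(1 + M + L^{q-1}\bigr)$, which fails once $L \geq L_\mu$.

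\textbf{Main obstacle.} The crucial balance is between the coercive gain of order $L^q$ and the $y$-Lipschitz error $\Lambda|p|^q|\hat x-\hat y|$ from \eqref{e.HsubqLip}, which a priori is also of order $L^q$. The $L^\infty$ bound is precisely what tames this: it forces $|\hat x-\hat y| \lesssim 1/L$ at the maximizer, reducing the error to $O(L^{q-1})$, strictly lower order than the gain. The concavity of $\omega$ is used to control the second-derivative term from Ishii's matrix inequality, since the Ishii--Lions uniform-ellipticity improvement is unavailable for the degenerate $A$ considered here. The superlinear exponent $q>1$ is essential throughout: both the construction of the radial barrier in Step 1 and the margin $L^q$ versus $L^{q-1}$ in Step 2 break down as $q\to 1^+$, which is consistent with the fact that the Lipschitz constant $L_\mu$ is allowed to depend on the upper bound of $\mu$.
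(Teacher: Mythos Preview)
The paper does not prove this proposition; it simply cites \cite[Theorem~3.1]{AT}. Your two–step strategy (a priori pointwise control, then doubling of variables) is indeed the standard one, and Step~2 is essentially the argument one finds in the literature for coercive viscous equations.

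However, Step~1 contains a genuine gap. The equation~\eqref{e.metbas} has no zeroth–order term, so it is invariant under $u\mapsto u+c$ for any constant $c$: an $L^\infty$ bound of the form $\|u\|_{L^\infty(B_{3/4})}\leq C_0(q,\Lambda,\mu)$ is therefore impossible. Correspondingly, your barrier $w_+(x)=A_0+B_0(9/16-|x|^2)^{-\beta}$ cannot be a global supersolution in $B_{3/4}$: at its interior minimum $x=0$ one has $Dw_+(0)=0$ and $D^2w_+(0)>0$, whence
\[
-\tr\big(A(0)D^2w_+(0)\big)+H(0,0)\leq H(0,0)\leq 0<\mu
\]
by~\eqref{e.imposition} whenever $\mu>0$ (and equality need not hold when $\mu=0$). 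The additive constant $A_0$ does not help, since it disappears under differentiation.

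What Step~2 actually needs is only a local \emph{oscillation} bound (or a preliminary H\"older estimate), so that you can choose the localization weight $M$ and force $|\hat x-\hat y|\lesssim 1/L$ at the maximizer. That is what \cite{AT} supplies. Obtaining it requires using that $u$ is simultaneously a sub- and a supersolution (so that one can compare $u(\cdot)-u(z)$ to a barrier centered at $z$), rather than a one–sided global barrier. Once Step~1 is rephrased this way, your Step~2 goes through as written; the balance you identify between the coercive gain $L^q$ and the $y$–Lipschitz error reduced to $O(L^{q-1})$ is exactly the point of the argument.
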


We next review some properties of the metric problem. The solutions of~\eqref{e.met} can be characterized as the \emph{maximal subsolutions} of~\eqref{e.metbas}, subject to the constraint of being nonpositive in $\overline B_1(x)$. In other words, we define $m_\mu(y,x)$ for each $x,y\in \Rd$ by
\begin{multline}\label{e.mmu}
m_\mu(y,x):= \sup\big\{ w(y) \,:\, w\in \USC( \Rd ) \ \mbox{is a subsolution of~\eqref{e.metbas} in} \ \Rd \\  \mbox{and} \ w \leq 0 \ \mbox{on} \ \overline B_1(x) \big\}.
\end{multline}
Here $\USC(X)$ denotes the set of upper semicontinuous functions on $X$, which is the appropriate space for viscosity subsolutions (c.f.~\cite{AT}). It is convenient to extend the definition of $m_\mu$ from $\{ x \}$ to arbitrary compact $K \subseteq \Rd$: we define, for every $y\in \Rd$,
\begin{multline}\label{e.mmuK}
m_\mu(y,K):= \sup\big\{ w(y) \,:\, w\in \USC( \Rd ) \ \mbox{is a subsolution of~\eqref{e.metbas} in} \ \Rd \\  \mbox{and} \ w \leq 0 \ \mbox{on} \ K+\overline B_1 \big\}.
\end{multline}
Note that $m_\mu(y,x )=m_\mu(y, \{x\} )$. The basic properties of $m_\mu$ are summarized in the following proposition.

\begin{prop}\label{existMP}
Let $\mu > 0$ and $K$ be compact subset of $\Rd$.
\begin{enumerate}
\item[(i)] The function $m_\mu(\cdot,K )$ is a solution of 
\begin{equation}\label{mpagan}
\left\{ \begin{aligned}
& -\tr\left( A(y ) D^2m_\mu(\cdot,K ) \right) +H(Dm_\mu(\cdot,K ),y ) = \mu & \mbox{in} & \ \Rd\setminus (K+\overline B_1), \\
& m_\mu(\cdot,K ) = 0 & \mbox{in} & \ K+\overline B_1. 
\end{aligned} \right.
\end{equation}

\item[(ii)] For every $y,z\in \Rd$ and $K,K'$ compact subsets of $\Rd$,
\begin{equation}\label{lips}
|m_\mu(y,K )  - m_\mu(z,K' )| \leq L_\mu\left(|y-z|+\dist_H(K,K')\right) .
\end{equation}

\item[(iii)] For $x,y,z\in \Rd$,
\begin{equation}\label{subadd}
m_\mu(y,x ) \leq m_\mu(y,z ) + m_\mu(z,x )+L_\mu.
\end{equation}

\item[(iv)]  There exist $0< l_\mu\leq L_\mu$ such that, for some $C,c>0$ depending only on an upper bound for~$\mu$, 
\begin{equation}\label{lmuLmu}
 c\mu \leq l_\mu \leq L_\mu \leq C
\end{equation}
and
\begin{equation}\label{control2}
l_\mu \dist(y,B(x,1)) \leq m_\mu(y,x ) \leq L_\mu \dist(y,B(x,1)).
\end{equation}

\item[(v)]  Let $K$ be a compact subset of $\R^d$. Then 
\begin{equation}\label{esti.K} 
m_\mu(y,K ) \geq l_\mu (\dist(y,K)-2).
\end{equation}
\end{enumerate}
\end{prop}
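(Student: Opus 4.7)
The plan is to prove (i)--(v) in order, combining the Perron method with the interior Lipschitz estimate of Proposition~\ref{p.lipschitz} and a few explicit subsolution constructions based on~\eqref{e.imposition}.

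Part (i) is a standard Perron-type argument: as a supremum of subsolutions, the upper semicontinuous envelope of $m_\mu(\cdot,K)$ is itself a viscosity subsolution of~\eqref{e.metbas} on all of $\R^d$, and a bump-barrier perturbation shows it is a supersolution off $K+\bar B_1$. On $K+\bar B_1$, the inequality $m_\mu \le 0$ is built into the definition, while $m_\mu \ge 0$ holds because~\eqref{e.imposition} makes $w\equiv 0$ an admissible competitor (since $H(0,y)\le 0\le\mu$). For (ii), the Lipschitz bound in $y$ is Proposition~\ref{p.lipschitz} applied pointwise, with the uniform constant $L_\mu$ depending only on $(q,\Lambda,\mu_0)$. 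The Hausdorff dependence on $K$ reduces to this: if $r := \dist_H(K,K')$, then $K+\bar B_1 \subseteq K'+\bar B_{1+r}$, so $m_\mu(\cdot,K') \le L_\mu r$ on $K+\bar B_1$ by the Lipschitz bound and the vanishing on $K'+\bar B_1$; hence $m_\mu(\cdot,K') - L_\mu r$ is admissible for $m_\mu(\cdot,K)$, and symmetry closes the bound.

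Part (iii) is the usual subadditivity trick: $\tilde w(y'):= m_\mu(y',z) - m_\mu(x,z) - L_\mu$ is a subsolution (a constant shift of one), and by (ii), $\tilde w \le 0$ on $\bar B_1(x)$, so it is admissible for $m_\mu(\cdot,x)$; rearranging after swapping $x\leftrightarrow z$ gives~\eqref{subadd}. For (iv), the upper bound $m_\mu(y,x) \le L_\mu \dist(y,B_1(x))$ is immediate from (ii) and the vanishing of $m_\mu(\cdot,x)$ on $\bar B_1(x)$. For the lower bound with $l_\mu := c\mu$ and $c$ small depending only on $(q,\Lambda,\mu_0)$, I would verify that $w(y) := l_\mu(|y-x|-1)_+$ is itself an admissible subsolution: $w$ is convex, so $-\tr(A D^2 w) \le 0$ in viscosity sense; by~\eqref{e.imposition} and~\eqref{e.HsubqDp}, $H(Dw,y) \le H(0,y) + \Lambda(l_\mu+1)^{q-1}l_\mu \le Cl_\mu \le \mu$; at nonsmooth points of $\partial B_1(x)$, any $C^2$ test function touching from above necessarily has $D\varphi = 0$ and $D^2\varphi \ge 0$, reducing the check to $H(0,y) \le \mu$.

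Part (v) is the principal technical step: extending the cone lower bound of (iv) from a single point to a general compact $K$. The natural candidate $l_\mu\,\dist(\cdot, K+\bar B_2)$---vanishing on $K+\bar B_1$ and equal to $l_\mu(\dist(\cdot,K)-2)_+$ outside---fails to be a viscosity subsolution at saddle regions of $\R^d\setminus(K+\bar B_2)$, where the distance function to a nonconvex set can be concave transverse to its gradient, producing test-function Hessians whose negative part violates the subsolution inequality. My plan is to avoid this candidate and argue by PDE comparison instead, using the cone subsolutions of (iv). By (i), $m_\mu(\cdot,K)$ solves~\eqref{e.metbas} in $\R^d\setminus(K+\bar B_1)$ with boundary value $0$ on $\partial(K+\bar B_1)$. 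For a given $y_0$ with $\dist(y_0,K) > 2$, the Minkowski-sum structure of $K+\bar B_2$ furnishes an interior ball $\bar B_2(x_0)\subseteq K+\bar B_2$ with $x_0\in K$, tangent to $\partial(K+\bar B_2)$ at the nearest point $z_0$ to $y_0$ and collinear with $y_0$ and $z_0$. The cone $w_{x_0}(y):= l_\mu(|y-x_0|-2)_+$ is a global subsolution by the (iv) calculation (with $1$ replaced by $2$), and $w_{x_0}(y_0) = l_\mu(\dist(y_0,K)-2)$. The obstacle is that $w_{x_0}$ is only $\le 0$ on $\bar B_2(x_0)$, not on all of $K+\bar B_1$, so it is not directly admissible for $m_\mu(\cdot,K)$; I would compensate by applying the PDE comparison principle in a bounded subdomain separating $y_0$ from the portion of $K+\bar B_1$ where $w_{x_0}$ is positive, matching the excess on the artificial boundary via the Lipschitz bound (ii). Completing this comparison while retaining the full value $l_\mu(\dist(y_0,K)-2)$ at $y_0$ is the step I expect to require the most care.
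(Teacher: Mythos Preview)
Your arguments for (i)--(iv) match the paper's (which defers (i) and the $y$-Lipschitz half of (ii) to~\cite{AT} and proves the rest just as you do).

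For (v), the comparison-in-a-subdomain plan has a genuine gap. To compare $w_{x_0}$ with $m_\mu(\cdot,K)$ on a domain $\Omega\ni y_0$ you need $w_{x_0}\le m_\mu(\cdot,K)$ on all of $\partial\Omega$. At this stage the only a~priori lower bound on $m_\mu(\cdot,K)$ is $m_\mu(\cdot,K)\ge 0$ (the Lipschitz estimate (ii) controls differences, not values). Hence every boundary point of $\Omega$ must satisfy $w_{x_0}\le 0$, i.e., lie in $\bar B_2(x_0)$; but $y_0\notin\bar B_2(x_0)$, so no admissible $\Omega$ can contain $y_0$. The argument is circular: setting up the comparison already requires the lower bound you are trying to prove.

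The paper sidesteps this by building a \emph{globally} admissible subsolution. Mollify: set $w:=\dist(\cdot,K)\ast\xi_\ep$, so that $|Dw|\le 1$ and $|D^2w|\le C/\ep$. For $\lambda=c\ep\mu$ with $c>0$ small, a suitable constant shift of $\lambda w$ is nonpositive on $K+\bar B_1$ and is a classical subsolution of~\eqref{e.metbas}, since the diffusion term contributes at most $\Lambda\lambda/\ep=c\Lambda\mu$ and the Hamiltonian term at most $C\lambda$. Feeding this directly into the definition~\eqref{e.mmuK} gives $m_\mu(y,K)\ge\lambda(\dist(y,K)-O(1))$; choosing $\ep$ of order one and absorbing constants into $l_\mu$ yields~\eqref{esti.K}. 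The mollification is precisely what tames the second derivatives you correctly flagged as the obstruction for the raw distance function.
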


\begin{proof} (i) and (ii) for $K=K'$ are proved in \cite{AT}. To show \eqref{lips}, it just remains to check that 
\be\label{kqcbskudfbn:}
|m_\mu(y,K )  - m_\mu(y,K' )| \leq L_\mu\dist_H(K,K').
\ee
We first note that  $\dist_H(K+\overline B_1,K'+\overline B_1)\leq \dist_H(K,K')$. Let $w(y)= m_\mu(y, K )-L_\mu \dist_H(K,K')$. Then $w$ is a subsolution to   \eqref{e.metbas}. Moreover, using the Lipschitz estimate \eqref{lips} for $K=K'$ and the fact that $m_\mu(y,K )\leq 0$ in $K+\overline B_1$, $$
w(y)= m_\mu(y, K )-L_\mu \dist_H(K,K')\leq 0 \qquad  \forall y\in K'+\overline B_1.
$$ 
By definition of $m_\mu(\cdot, K' )$, this implies that $m_\mu(\cdot, K' )\geq w$. Then  \eqref{kqcbskudfbn:} follows.

(iii) Let $w(y)= m_\mu(y,x ) - m_\mu(z,x )-L_\mu$. Then $w$ is a subsolution  to \eqref{e.metbas} in $\R^d$ which satisfies thanks to \eqref{lips},  
$$
w(y) \leq m_\mu(z,x )+L_\mu|z-y|-m_\mu(z,x )-L_\mu \leq 0\qquad \forall y\in B_1(z).
$$
This implies \eqref{subadd} by definition of $m_\mu(\cdot, z )$.

For proving (iv), first note that $y\to l_\mu(|y-x|-1)_+$ is a subsolution of \eqref{e.metbas} for $l_\mu$ small enough: hence the left-hand side of \eqref{lmuLmu}. As $y\to L_\mu(|y-x|-1)_+$ is a supersolution \eqref{mpagan} for $L_\mu$ large enough, we get the right-hand side of \eqref{lmuLmu} by comparison (see \cite{AT}). 

(v) Let $\xi:\R^d\to \R$ be a standard mollification kernel and denote, for $\ep>0$, $\xi_\ep(y) := \ep^{-\d} \xi(y/\ep)$. Set $w := \dist(\cdot, K)* \xi_\ep $. Since  $\dist(\cdot, K)$ is Lipschitz continuous, we have $\|Dw\|_\infty\leq 1$, $\|D^2w\|_\infty\leq 1/\ep$ and $\|w-\dist(\cdot, K)\|_\infty\leq C\ep$. For $\lambda= c\ep\mu$ with $c>0$ small enough, the function $\lambda (w-C\ep)$ is a subsolution of \eqref{e.metbas} which is nonpositive on $K+\overline B_1$. By definition of $m_\mu(\cdot, K )$ we have therefore 
$$
m_\mu(y, K ) \geq \lambda (w(y)-C\ep) \geq \lambda ( \dist(y, K) - 2C\ep) \geq l_\mu( \dist(y, K) - 2),
$$
for $\ep$ small enough (and changing the definition of $l_\mu$ if necessary).
\end{proof}

We continue by introducing some notation and basic observations regarding the sublevel sets of the maximal subsolutions, which play a key role in our analysis. For each $t\geq 0$, we let $\Re_{\mu,t}$ denote the $t$-sublevel set of $m_\mu(\cdot,0 )$, that is,
\begin{equation*}\label{}
 \Re_{\mu,t}:= \left\{ z \in \Rd \,:\, m_\mu(z,0 ) \leq t \right\}.
\end{equation*}
Note that, by \eqref{control2}, $\Re_{\mu,0}= \overline B_1$. For each $\mu > 0$, we think of $\Re_{\mu,t}$ as a ``moving front" with the variable~$t$ representing time.

\begin{prop}\label{p.pptRe} For each $\mu>0$ and $t\geq 0$:
\begin{itemize}
\item[(i)] $\Re_{\mu,t}$ is a compact connected subset of $\R^d$.

\item[(ii)] For every $0\leq s< t$, 
\begin{equation}\label{e.movefron}
\dist_H\left( \Re_{\mu,s} , \Re_{\mu,t} \right) \leq \frac{1}{l_\mu} |s-t|+2.
\end{equation}
\end{itemize}
\end{prop}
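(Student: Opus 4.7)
The proof splits naturally into compactness/connectedness for (i) and a comparison argument for (ii). Compactness of $\Re_{\mu,t}$ is immediate: the set is closed by continuity of $m_\mu(\cdot,0)$ from Proposition~\ref{existMP}(ii), and bounded by the lower estimate $m_\mu(y,0)\geq l_\mu(|y|-1)_+$ from \eqref{control2}, which confines $\Re_{\mu,t}$ to a ball of radius $1+t/l_\mu$. For connectedness, the idea is to exploit the supersolution property of $m_\mu(\cdot,0)$ in $\Rd\setminus\overline B_1$, which together with \eqref{e.imposition} rules out local minima there: at any putative interior local minimum $y_0$, the constant function $m_\mu(y_0,0)$ touches from below, forcing $H(0,y_0)\geq \mu>0$ and contradicting $H(0,y_0)\leq 0$. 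Supposing $\Re_{\mu,t}$ had a connected component $C$ disjoint from $\overline B_1$, one picks $y_0\in\argmin_C m_\mu(\cdot,0)$; since disjoint compact components of the compact set $\Re_{\mu,t}$ lie at positive distance, a small ball around $y_0$ meets $\Re_{\mu,t}$ only inside $C$, making $y_0$ a genuine local minimum in $\Rd$ and yielding the contradiction (the edge case $m_\mu\equiv t$ on $C$ is handled by the same principle applied on any open subset).

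For (ii), the inclusion $\Re_{\mu,s}\subseteq \Re_{\mu,t}$ gives $\sup_{z\in\Re_{\mu,s}}\dist(z,\Re_{\mu,t})=0$ trivially. The main step for the other direction is to prove
\begin{equation*}
m_\mu(y,\Re_{\mu,s}) \leq m_\mu(y,0) - s \qquad \text{for every } y\in \Rd\setminus (\Re_{\mu,s}+\overline B_1).
\end{equation*}
I would establish this via the viscosity comparison principle in the open exterior domain $\Omega':=\Rd\setminus(\Re_{\mu,s}+\overline B_1)$. There, $m_\mu(\cdot,\Re_{\mu,s})$ is a subsolution of \eqref{e.metbas} and, because $\overline B_1\subseteq \Re_{\mu,s}+\overline B_1$, the shifted function $m_\mu(\cdot,0)-s$ is a (super)solution. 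On $\partial\Omega'=\{y:\dist(y,\Re_{\mu,s})=1\}$, every point lies strictly outside $\Re_{\mu,s}$, so $m_\mu(\cdot,0)>s$, while $m_\mu(\cdot,\Re_{\mu,s})=0$. Moreover, the inequality $m_\mu(y,0)\leq m_\mu(y,\Re_{\mu,s})+s+L_\mu$ (obtained by noting that $m_\mu(\cdot,0)-s-L_\mu$ is a subsolution which is $\leq 0$ on $\Re_{\mu,s}+\overline B_1$ thanks to Lipschitz continuity) combined with $m_\mu(y,\Re_{\mu,s})\leq m_\mu(y,0)$ (larger constraint set yields smaller maximal subsolution) shows the difference of the two competitors is bounded on $\Omega'$, so the comparison principle from~\cite{AT} applies in this unbounded domain.

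Combining the key inequality with the lower bound from Proposition~\ref{existMP}(v) applied to $K=\Re_{\mu,s}$, for any $y\in\Re_{\mu,t}$ with $\dist(y,\Re_{\mu,s})>1$,
\begin{equation*}
l_\mu\bigl(\dist(y,\Re_{\mu,s})-2\bigr) \leq m_\mu(y,\Re_{\mu,s}) \leq m_\mu(y,0)-s \leq t-s,
\end{equation*}
which rearranges to $\dist(y,\Re_{\mu,s})\leq (t-s)/l_\mu+2$. The remaining case $\dist(y,\Re_{\mu,s})\leq 1$ is trivial.

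\textbf{Main obstacle.} The delicate part of (i) is ruling out thin or otherwise pathological connected components disjoint from $\overline B_1$ without additional regularity on $m_\mu(\cdot,0)$; the no-interior-minimum principle is clean, but its translation into a component-level statement needs care. For (ii) the comparison argument is standard once the right competitors are identified, the main technical point being the verification that boundedness of the difference suffices to apply the exterior-domain comparison principle of~\cite{AT}.
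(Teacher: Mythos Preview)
Your argument for (ii) is correct and essentially matches the paper: both reduce to the inequality $m_\mu(y,0)\geq m_\mu(y,\Re_{\mu,s})+s$ on $\Rd\setminus\Re_{\mu,s}$ and then apply~\eqref{esti.K}. The only difference is that the paper derives this inequality from the maximality of $m_\mu(\cdot,0)$ directly, by checking that
\[
w(y):=\begin{cases} m_\mu(y,0), & y\in\Re_{\mu,s},\\ \max\{m_\mu(y,0),\,m_\mu(y,\Re_{\mu,s})+s-\ep\}, & y\notin\Re_{\mu,s},\end{cases}
\]
is a global subsolution vanishing on $\overline B_1$, hence $w\leq m_\mu(\cdot,0)$. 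This sidesteps the need to invoke a comparison theorem in the unbounded exterior domain and to verify its hypotheses.

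For (i) your route differs from the paper's. The paper argues by comparison on a hypothetical component $\mathcal W$ disjoint from $\overline B_1$: since $m_\mu(\cdot,0)$ solves the equation in $\mathcal W$ with boundary value $t$ on $\partial\mathcal W$, and the constant $t$ is a strict subsolution, comparison gives $m_\mu>t$ in $\intr\mathcal W$, contradicting $\mathcal W\subseteq\Re_{\mu,t}$. Your no-local-minimum approach is a legitimate alternative, but the justification has a gap: from ``any two components are at positive distance'' you cannot conclude that a small ball around $y_0$ meets $\Re_{\mu,t}$ only in $C$, because infinitely many other components could accumulate at $y_0$ (as in $\{0\}\cup\{1/n:n\geq1\}\subseteq\R$, where every ball around $0$ meets other components). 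What actually makes the main case work is different and simpler: if $m_\mu(y_0,0)<t$ then continuity gives a ball $B_r(y_0)\subseteq\{m_\mu(\cdot,0)<t\}$, which is connected and therefore lies entirely in $C$, so $y_0$ is a local minimum for the right reason. Your treatment of the edge case $m_\mu\equiv t$ on $C$ (``the same principle applied on any open subset'') is too vague to assess as written.
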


\begin{proof} (i) Boundedness of $\Re_{\mu,t}$ comes from \eqref{control2}. To prove that $\Re_{\mu,t}$ is connected, let ${\mathcal W}$ be a connected component of $\Re_{\mu,t}$. We claim that ${\mathcal W}$ contains the ball $B_1$: this will show that $\Re_{\mu,t}$ consists in a unique connected component, i.e., that $\Re_{\mu,t}$ is connected. For this, let us assume that, contrary to our claim, $B_1\not\subseteq {\mathcal W}$. Since, by \eqref{control2} again, $B_1$ lies in the interior of $\Re_{\mu,t}$, the sets ${\mathcal W}$ and $B_1$ must have an empty intersection. Therefore $m_\mu(\cdot, 0 )$ is  a solution to the problem
$$
 -\tr\left( A(y ) D^2m_\mu(\cdot,K ) \right) +H(Dm_\mu(\cdot,K ),y ) = \mu\qquad {\rm in}\; {\mathcal W}, 
 $$
 with boundary conditions $m_\mu(\cdot,0 )=t$ on $\partial {\mathcal W}$. As the constant map $w(y)=t$ is a strict subsolution of this equation, we have $m_\mu(\cdot, 0 )>w=t$ in the interior of ${\mathcal W}$ by comparison (see \cite{AT}). This contradicts the definition of ${\mathcal W}$. 
 
(ii) As $\Re_{\mu,s}\subseteq \Re_{\mu,t}$, we just have to prove that 
\begin{equation}\label{ljhqbdfsljhhj}
\Re_{\mu,t}\subseteq \Re_{\mu,s}+(2+\frac{1}{l_\mu} |s-t|)\overline B_1.
\end{equation} 
 Set $K:= \Re_{\mu,s} $ and notice that
\begin{equation}\label{lkjhbqscvezrj}
m_\mu(y,0 )\geq m_\mu(y, K ) + s \quad {\rm in }\ \R^d\setminus K.
\end{equation}
Indeed, let $\ep >0$ and denote 
\begin{equation*}\label{}
w(y):= \begin{cases} m_\mu(y,0 ) & y\in K, \\
\max\left\{ m_\mu(y,0 ),m_\mu(y, K )+s-\ep\right\} & y\in \R^d \setminus K. \end{cases}
\end{equation*}
Observe that $w$ is a subsolution of \eqref{e.subsol} in $\Rd$ which vanishes on $\overline B_1$. Hence $w\leq m_\mu(\cdot,0 )$ by the definition~\eqref{e.mmu}. Letting $\ep \to 0$ we get \eqref{lkjhbqscvezrj}. Combining estimate \eqref{esti.K} with \eqref{lkjhbqscvezrj}, we obtain
\begin{equation*}
 m_\mu(\cdot, 0 )\geq s+ l_\mu \left( \dist(\cdot,K) -2 \right)\quad \mbox{in} \ \Rd\setminus K.
\end{equation*}
This yields~\eqref{ljhqbdfsljhhj}. 
\end{proof}

\section{Estimating the fluctuations}\label{s.fluctuation}

The statement of Theorem~\ref{mpEE} can be divided into two parts: (i) an estimate of the random part of the error, that is, of $m_\mu(y,0) - \E \left[ m_\mu(y,0) \right]$; and (ii) an estimate of the nonrandom part of the error, that is, of $\E \left[ m_\mu(y,0) \right] - \overline m_\mu(y)$. In this section we focus on the first part. Throughout the rest of the paper, we denote the mean of $m_\mu(y,0)$ by
\begin{equation} \label{e.meandef}
M_\mu(y):= \E \left[ m_\mu(y,0) \right].
\end{equation}

\begin{prop}
\label{p.fluc}
Fix $\mu_0\geq 1$. There exists $C>0$, depending only on $(q,\Lambda,\mu_0)$ such that, for every $0< \mu \leq \mu_0$, $\lambda\geq C$ and $|y|\geq C\mu^{-2}$, we have
\begin{equation}\label{e.fluc}
\P\big[ \left|m_\mu(y,0 ) - M_\mu(y) \right| > \lambda  \big] \leq \exp\left( - \frac{\mu^4\lambda^2}{C |y|}\right).
\end{equation}
\end{prop}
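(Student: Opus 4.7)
The plan is a Doob--martingale concentration argument exploiting the finite range of dependence assumption~\eqref{e.frd}, combined with a detour/coupling estimate that controls how much $m_\mu(y,0)$ can shift when one locally resamples the environment.

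First, I would \emph{localize}: the two--sided bounds in~\eqref{control2} and~\eqref{lmuLmu} confine the relevant behavior of $m_\mu(y,0)$ to a large ball $B_R$ whose radius $R$ depends polynomially on $|y|$ and $\mu^{-1}$, so that altering the coefficients outside $B_R$ changes $m_\mu(y,0)$ only negligibly. This reduction (the precursor to Lemma~\ref{l.seige}, used throughout Section~\ref{s.fluctuation}) lets me work with finitely many degrees of freedom. Next, partition $B_R$ into congruent cubes $\{Q_i\}_{i=1}^N$ of unit side, fix an arbitrary enumeration, and let $\F_i$ be the $\sigma$--algebra generated by the coefficients restricted to $Q_1\cup\cdots\cup Q_i$. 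Form the Doob martingale $M_i := \E[m_\mu(y,0)\mid \F_i]$, so that $M_0 = M_\mu(y)$ and $M_N$ equals $m_\mu(y,0)$ up to the truncation error of the first step. Because of~\eqref{e.frd}, after a unit--buffer fattening the conditional distribution of the coefficients in $Q_i$ given $\F_{i-1}$ coincides with its unconditional law, which is what enables the following coupling.

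To bound the increments $X_i := M_i - M_{i-1}$, for each $i$ I would couple $\omega$ to an independently resampled environment $\omega^{(i)}$ that agrees with $\omega$ outside the fattened cube $Q_i^+$, and estimate $|m_\mu^\omega(y,0) - m_\mu^{\omega^{(i)}}(y,0)|$. Using the subadditivity~\eqref{subadd} together with the Lipschitz control~\eqref{lips}, one converts any near--optimal subsolution into a detoured subsolution which bypasses $Q_i^+$, at an extra $m_\mu$--cost at most $C L_\mu$. This gives a deterministic bound $|X_i|\leq C$, which is then strengthened to a stochastic bound $|X_i|\leq Y_i$, with $Y_i$ vanishing unless $Q_i^+$ lies within $O(1)$ of an approximate geodesic of $m_\mu(\cdot,0)$. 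The number of such cubes is controlled by the Euclidean length of the geodesic, which by~\eqref{lmuLmu} and the upper bound $m_\mu(y,0)\leq C|y|$ is at most $m_\mu(y,0)/l_\mu\leq C|y|/\mu$. After accounting for the cross--sectional thickness of the set of near--optimizers -- and using the hypothesis $|y|\geq C\mu^{-2}$ to ensure that this tube is genuinely thinner than it is long -- one obtains $\sum_i Y_i^2 \leq C|y|/\mu^4$. A stochastic bounded--differences form of the Azuma--Hoeffding inequality, in the spirit of Kesten~\cite{K2}, then produces the Gaussian tail $\exp(-\mu^4\lambda^2/C|y|)$.

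The main obstacle will be the coupling/tubular estimate behind the bound $\sum Y_i^2\leq C|y|/\mu^4$: one needs to show that the ``footprint'' of approximate minimizers of the metric problem concentrates in a thin tube around $[0,y]$, so that only a small fraction of cubes carry non--negligible increments. The key geometric input will be the moving--front description of Proposition~\ref{p.pptRe}, which quantitatively translates a spatial detour into an additive loss in $m_\mu$ and thereby justifies the coupling bound for each $X_i$.
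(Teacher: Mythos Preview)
Your proposal follows Kesten's original spatial-cube Doob martingale, whereas the paper follows Zhang's refinement and builds the filtration from the \emph{sublevel sets} $\Re_{\mu,t}$ of $m_\mu(\cdot,0)$ themselves (more precisely, from a localized surrogate $\Se_{\mu,t}$). The martingale is $X_t=\E[m_\mu(y,0)\mid \F_{\mu,t}]-M_\mu(y)$ with $\F_{\mu,t}$ the $\sigma$--algebra ``generated by the environment inside $\Se_{\mu,t}$,'' and the increment bound~\eqref{e.stepbnd} is obtained \emph{deterministically} from a dynamic programming identity (Lemma~\ref{l.dpp}), the localization estimate of Lemma~\ref{l.seige}/Corollary~\ref{c.pillage}, and the Lipschitz bounds. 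No geodesic or tube is ever invoked; the number of Azuma steps is $\sim L_\mu|y|/(l_\mu a_\mu)$, which produces the $\mu^4$ in the exponent via~\eqref{e.amubnd}.

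Your approach has a genuine gap precisely at the point you flag as the ``main obstacle.'' The detour/coupling step and the tube estimate both rest on a path-based picture---``near-optimal subsolution,'' ``approximate geodesic,'' ``Euclidean length of the geodesic''---that does not exist for the viscous equation when $A\not\equiv 0$. There are no deterministic minimizing curves to reroute around $Q_i^+$; the subadditivity~\eqref{subadd} gives a triangle inequality for \emph{points}, not a mechanism to excise a cube from a subsolution, and the control-theoretic trajectories are diffusions that may visit $Q_i^+$ with positive probability regardless of where $y$ sits. This is exactly the difficulty the paper isolates in Subsection~\ref{ss.heur}: the second-order term destroys finite speed of propagation, so $\Re_{\mu,t}$ is \emph{not} measurable with respect to the environment inside itself, and one must work (Lemma~\ref{l.seige}) to show that $m_\mu^U$ approximates $m_\mu$ exponentially well in the sublevel sets. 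Without that localization input, there is no reason the resampling increment $|X_i|$ should vanish when $Q_i$ is far from the segment $[0,y]$, and hence no way to get $\sum_i Y_i^2\le C|y|/\mu^4$ rather than the trivial $\sum_i Y_i^2\le C R^d$. In short, the sublevel-set filtration is not a stylistic choice: it is what replaces the missing geodesic geometry in the viscous case.
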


\subsection{{An heuristic proof of Proposition~\ref{p.fluc}}}
\label{ss.heur}
The overall strategy underlying the proof of~Proposition~\ref{p.fluc} is similar to the argument of~\cite[Proposition~4.1]{ACS} in the first-order case (which we recommend reading first): we localize the maximal subsolutions in their sublevel sets and apply Azuma's inequality. However, the problem is more difficult here  because the presence of a nonzero diffusion renders the localization phenomenon much more subtle.

The idea to use Azuma's inequality to estimate the fluctuations of a random variable like $m_\mu(y,0 )$ is due to Kesten~\cite{K2}, who applied it to the passage time function in first-passage percolation. Later, Zhang~\cite{Z} modified the approach of~\cite{K2} by introducing the idea of conditioning on the environment in the sublevel sets of the passage time function-- a more geometrically natural construction which was extended to first-order equations in the proof of~\cite[Proposition 4.1]{ACS}.

In this subsection we explain the heuristic ideas underlying the argument for Proposition~\ref{p.fluc}. Most of what is presented here is \emph{not} rigorous, although it is made precise in the rest of the section.

We begin by fixing $y\in \Rd$ with $|y| \gg 1$. To estimate the distribution of the random variable $m_\mu(y,0 )$, we recall that $\Re_{\mu,t}$ denotes the $t$-sublevel set of $m_\mu(\cdot,0 )$, that is,
\begin{equation*}\label{}
\Re_{\mu,t}:= \left\{ z\in \R^d  \,:\, m_\mu(z,0 ) \leq t \right\}.
\end{equation*}
We think of $\Re_{\mu,t}$ as a ``moving front" which ``reveals the environment" as the ``time" $t$ increases. We let $\F'_{\mu,t}$ denote ``the $\sigma$-algebra generated by the environment in $\Re_{\mu,t}$". 

The ``best guess" for the difference between the random variable $m_\mu(y,0 )$ and its mean $M_\mu(y):= \E\left[m_\mu(y,0 )\right]$, given the information about the medium in $\F'_{\mu,t}$, is
\begin{equation*}\label{}
X_t := \E \left[ m_\mu(y,0 ) \,\big\vert\, \F'_{\mu,t} \right] - M_\mu(y).
\end{equation*}
Note that $\{ X_t\}_{t\geq 0}$ is a martingale with $\E X_t=0$. If we could show, for some constant $C_\mu >0$, that
\begin{equation}\label{e.keyinc}
|X_t - X_s| \leq C_\mu (1+|s-t|)
\end{equation}
and, for $T = C_\mu|y|$,
\begin{equation}\label{e.keyinc2}
X_T = m_\mu(y,0 ) - M_\mu(y),
\end{equation}
then Azuma's inequality yields
\begin{equation*}\label{}
\Prob \left[\left|m_\mu(y,0 ) - \E[m_\mu(y,0 )] \right|  > \lambda \right] \leq \exp\left(-\frac{2\lambda^2}{C_\mu|y|} \right),
\end{equation*}
which is the estimate in Proposition~\ref{p.fluc}, albeit with a less explicit constant $C_\mu$. 

Thus the key step in the proof of~Proposition~\ref{p.fluc} is proving~\eqref{e.keyinc} and~\eqref{e.keyinc2}. The quantity $|X_t-X_s|$ for $0 < s < t$ represents ``the magnitude of the improvement in the estimate for $m_\mu(y,0 )$ after gaining information about the environment in $\Re_{\mu,t}\setminus \Re_{\mu,s}$." The reason that we can expect analogues of~\eqref{e.keyinc} and~\eqref{e.keyinc2} to be true has to do with the Lipschitz estimates available for the viscous Hamilton-Jacobi equation and the fact that the maximal subsolutions ``localize in their sublevel sets." The rest of this subsection is devoted to giving  informal heuristic arguments for~\eqref{e.keyinc} and~\eqref{e.keyinc2}. 

Before proceeding to the heuristic proofs, we review the  ingredients in the arguments. The crucial localization property can be stated more (but still not completely) precisely as ``$m_\mu(y,0 )$ is (almost) measurable with respect to the $\sigma$-algebra $\F_{\mu,t}'$ generated by the environment inside $\Re_{\mu,t}$, provided that $t \geq m_\mu(y,0 )$." In other words, if $0 < s \leq t$, then
\begin{equation}\label{e.crutid1}
m_\mu(y,0 )\indc_{\{ y\in \Re_{\mu,s}\}}(\omega) \approx
\E\big[ m_\mu(y,0 )\ |\ \F'_{\mu,t}\big]\indc_{\{y\in \Re_{\mu,s}\}}.
\end{equation}
Put yet another way: ``the sublevel set $\Re_{\mu,t}$ is (almost) measurable with respect to the $\sigma$-algebra $\F_{\mu,t}'$ generated by the environment inside itself."  If this were true, as it is in the first-order case, then since $m_\mu(y,\Re_{\mu,t} )$ depends only on the environment in the complement of $\Re_{\mu,t}$ we would have, by independence,
\begin{equation}\label{e.crutid2}
\E \big[ m_\mu(y,\Re_{\mu,t} ) \,\big\vert\, \F_{\mu,t}' \big](\omega) \approx \sum_{\scriptsize{K \text{ compact}}} \E \left[ m_\mu(y,K )\right] \indc_{\{\Re_{\mu,t}=K\}}.
\end{equation}
Here things are \emph{much} simpler in the first-order case because the sublevel sets strongly localize the maximal subsolutions and in particular~\eqref{e.crutid1} and~\eqref{e.crutid2} hold with equality; see~\cite[(4.9)]{ACS} and~\cite[Lemma~3.1]{ACS}. The sum on the right of~\eqref{e.crutid2} is interpreted by partitioning the set of compact Borel sets into a finite number of sets of small diameter in the Hausdorff distance (see below).

Unfortunately, the second-order term destroys this strong localization property. An intuition is provided by the interpretation of $m_\mu(y,0 )$ in terms of optimal control theory. Under this interpretation, in the first-order case, once the environment is fixed the optimal trajectories are deterministic and stay confined within the sublevel sets. In contrast, the diffusive term requires the maximal subsolutions to ``see all of the environment" because optimal trajectories are stochastic and thus may venture far outside the sublevel sets before eventually returning to $B_1$. What saves us is a continuous dependence-type estimate (Lemma~\ref{l.seige}, below) which implies that the maximal subsolutions possess a weaker form of the localization property. This allows us to replace $\Re_{\mu,t}$ by a close approximation of it, denoted by $\Se_{\mu,t}$, which does have the property of being measurable with respect to the $\sigma$-algebra generated by the environment inside itself. The rigorous justifications of~\eqref{e.crutid1} and~\eqref{e.crutid2} appear below in Lemmas~\ref{l.frumpies} and~\ref{l.gumpers}, respectively.

The next ingredient we discuss is the ``dynamic programming principle." Under the control theoretic interpretation that $m_\mu(y,K )$ is measuring a ``cost imposed by the environment $\omega$ to transport a particle from $y$ to $K$," it is intuitively clear that
\begin{equation}\label{e.dpp.h}
m_\mu(y,0 ) \indc_{\{y\not\in \Re_{\mu,t}\}} \approx \left(  t + m_\mu(y,\Re_{\mu,t} ) \right) \indc_{\{y\not\in \Re_{\mu,t}\}}.
\end{equation}
In the first-order case, the quantities in~\eqref{e.dpp.h} are interpreted in such a way that~\eqref{e.dpp.h} holds with equality; see~\cite[(A.20)]{ACS}. This cannot occur in the second-order case due to the fact that a diffusion ``has more than one way of going between points" (here we are quoting Sznitman~\cite[Example~1.1]{SzEE}). Nevertheless, we show in Lemma~\ref{l.dpp} that~\eqref{e.dpp.h} holds provided we allow for a suitably small error.

We now assemble the above imprecise ingredients into an imprecise (but hopefully illuminating) proof of~\eqref{e.keyinc}. Below we make free use of~\eqref{e.dpp.h}, ~\eqref{e.crutid1} and~\eqref{e.crutid2} and are not troubled by the difficulties in interpreting the sum on the right of~\eqref{e.crutid2}. We also allow the constants to depend on~$\mu$. 

\begin{proof}[{Heuristic proof of~\eqref{e.keyinc} and~\eqref{e.keyinc2}}]
The reason that~\eqref{e.keyinc2} should hold is straightforward: according to~\eqref{e.movefron}, if $T \geq L_\mu |y|$, then we have $\indc_{\{ y\in \Re_{\mu,T}\}} \equiv 1$ . Thus~\eqref{e.crutid1} yields~\eqref{e.keyinc2}. 

To obtain~\eqref{e.keyinc} it suffices to prove the following two estimates for fixed $0< s \leq t$:
\begin{equation}\label{e.keyinc.a}
|X_t-X_s| \leq \left|  \E \big[ m_\mu(y,\Re{\mu,t} ) \,\big\vert\, \F_{\mu,t}' \big] - \E \big[ m_\mu(y,\Re{\mu,s} ) \,\big\vert\, \F_{\mu,s}' \big] \right| + |s-t| 
\end{equation}
and
\begin{equation}\label{e.keyinc.b}
\left|  \E \big[ m_\mu(y,\Re_{\mu,t} ) \,\big\vert\, \F_{\mu,t}' \big] - \E \big[ m_\mu(y,\Re_{\mu,s} ) \,\big\vert\, \F_{\mu,s}' \big] \right| \leq C(1+|s-t|). 
\end{equation}

To get~\eqref{e.keyinc.a}, we write
\begin{equation*}\label{}
X_t = \E \big[ m_\mu(y,\Re_{\mu,t} ) \,\big\vert\, \F_{\mu,t}' \big]  \indc_{\{y \in \Re_{\mu,t}\}} +  \E \big[ m_\mu(y,\Re_{\mu,t} ) \,\big\vert\, \F_{\mu,t}' \big]  \indc_{\{y \not\in \Re_{\mu,t}\}}, 
\end{equation*}
subtract from this a similar expression for $X_s$ and apply~\eqref{e.dpp.h}.

To get~\eqref{e.keyinc.b}, we use~\eqref{e.crutid1} and~\eqref{e.crutid2} to see that
\begin{multline}
\label{e.doubsum}
\E \big[ m_\mu(y,\Re_{\mu,t} ) \,\big\vert\, \F_{\mu,t}' \big] - \E \big[ m_\mu(y,\Re_{\mu,s} ) \,\big\vert\, \F_{\mu,s}' \big] \\ =\sum_{\scriptsize{K,\, K' \text{ compact}}} \left( \E \left[ m_\mu(y,K )\right] - \E \left[ m_\mu(y,K' )\right] \right) \indc_{\{\Re_{\mu,t}=K\}}\indc_{\{\Re_{\mu,s}=K'\}}.
\end{multline}
Next we notice that the Lipschitz estimates imply that, either $\indc_{\{\Re_{\mu,t}=K\}}\indc_{\{\Re_{\mu,s}=K'\}} \equiv 0$ or else $K$ and $K'$ are closer than $C|s-t|$, measured in the Hausdorff distance. Using the Lipschitz estimate again, we then get
\begin{equation*}\label{}
\left| \E \left[ m_\mu(y,K )\right] - \E \left[ m_\mu(y,K' )\right] \right| \indc_{\{\Re_{\mu,t}=K\}}\indc_{\{\Re_{\mu,s}=K'\}} \leq C (1+ |s-t|) \indc_{\{\Re_{\mu,t}=K\}}\indc_{\{\Re_{\mu,s}=K'\}}. 
\end{equation*}
Inserting this into the right side of~\eqref{e.doubsum} and summing over all compact $K$ and $K'$ yields~\eqref{e.keyinc}.
\end{proof}

The rigorous version of the heuristic proof above is given in Subsection~\ref{ss.fluc}, and it relies on the precise justification of~\eqref{e.crutid1},~\eqref{e.crutid2} and~\eqref{e.dpp.h}, found in Lemmas~\ref{l.frumpies},~\ref{l.gumpers} and~\ref{l.dpp}, respectively, in Subsection~\ref{e.justify}. In order to prove the latter, we first address the problems surrounding the lack of strong localization of $\Re_{\mu,t}$. This is the subject of the next subsection.

\subsection{Localizing the sublevel sets of the maximal subsolutions}
\label{ss.localize}
We consider maximal solutions of the inequality
\begin{equation}\label{e.subsol}
\left\{ \begin{aligned}
& -\tr\left( A(y ) D^2w \right) + H(Dw,y ) \leq \mu &  \mbox{in} & \ U,\\
& w \leq 0 & \mbox{on} & \ \overline B_1, 
\end{aligned} \right.
\end{equation}
where $U$ is an open subset of $\Rd$ with $\overline B_1 \subseteq U$. The maximal subsolutions are defined by
\begin{equation}\label{e.mmuUdef}
m_\mu^U(y ) := \sup \left\{ w(y) \, : \,  w\in \USC(U) \ \mbox{satisfies} \ \eqref{e.subsol} \right\}.
\end{equation}
The advantage of this quantity over $m_\mu$ is that it depends only on the environment in $U$. Indeed, it is immediate from~\eqref{e.mmuUdef} that, for each $y\in U$, the random variable $m_\mu^U(y )$ is $\mathcal G(U)$--measurable. In order to estimate the dependence of the original $m_\mu$ on the environment outside of one of its level sets, we compare $m_\mu$ to $m_\mu^U$ for $U$'s only a little larger than the sublevel sets. This is the purpose of this subsection, and the key estimates are obtained below in Corollary~\ref{c.pillage}. 

\smallskip

We continue with the basic properties of $m_\mu^U$ needed in the arguments below: proofs of these statements can be found in \cite{AT}. It follows from the definition that
\begin{equation*}\label{}
\overline B_1 \subseteq U \subseteq V \qquad \mbox{implies that} \qquad m_\mu^{V}(\cdot ) \leq m_\mu^U(\cdot ) \quad \mbox{in} \ U.
\end{equation*}
In particular, since $m_\mu^{\Rd}(\cdot ) = m_\mu(\cdot,0 )$,
\begin{equation}\label{e.mmuUub}
m_\mu(\cdot,0 ) \leq m_\mu^U(\cdot ) \quad \mbox{in} \ U.
\end{equation}
We note that $m_\mu^U(y ) =+\infty$ if $y$ does not belong to the connected component of $U$ containing~$\overline B_1$. We also define $m_\mu^K$ for any Borel set $K\in \mathcal B$ which contains $\overline B_1$ in its interior by setting $m_\mu^K:= m_\mu^{{\rm int} K}$. 
As shown in~\cite{AT}, by Perron's method, $m^U_\mu(\cdot )$ satisfies
\begin{equation*}\label{}
-\tr\left( A(y ) D^2m^U_\mu \right) + H\big(Dm^U_\mu,y \big) = \mu \quad \mbox{in} \ \left\{ y\in U \,:\, m_\mu^U(y ) < +\infty \right\} \setminus \overline B_1.
\end{equation*}
Therefore, the interior Lipschitz estimates apply: let $\tilde U$ be the connected component of $U$ containing $B_1$ and let
\begin{equation*}\label{}
U_h := \left\{ x\in \tilde U \, : \, \dist(x,\partial \tilde U) > h \right\},
\end{equation*}
we have that $m_\mu(\cdot )$ is Lipschitz on $\overline U_1$ and
\begin{equation}\label{e.flughagen}
\esssup_{y\in U_1} \big|Dm^U_\mu(y )\big| \leq L_\mu.
\end{equation}
The Lipschitz constant of $m_\mu$ on $\overline U_1$ may depend on the geometry of $U_1$ and in general can be much larger than $L_\mu$. However,~\eqref{e.flughagen} implies that the Lipschitz constant of $m_\mu$ is at most $L_\mu$ relative to each of its sublevel sets which are contained in $U_1$. That is, for every $t>0$,
\begin{multline} \label{e.lipflug}
\left\{ y\in U \, :\, m^U_\mu(y ) \leq t \right\} \subseteq \overline U_1 \quad \mbox{implies that} \\
\mbox{for every} \ y,z\in \left\{ y\in U \, :\, m^U_\mu(y ) \leq t \right\}, \quad \left| m_\mu^U(y ) - m^U(z ) \right| \leq L_\mu|y-z|.
\end{multline}
Here is the proof: modify and extend~$m^U_\mu(\cdot )$ to the complement of~$\{ m^U_\mu(\cdot ) \leq t \}$ by setting it equal to~$t$ there. It is then clear that this modified function has gradient bounded by $L_\mu$ in~$\Rd$, and is therefore globally Lipschitz with constant~$L_\mu$. Since we did not alter any of the values in the~$t$--sublevel set, we obtain the conclusion of~\eqref{e.lipflug}.

Observe that~\eqref{e.mmuUub} and~\eqref{control2} yield the lower growth estimate
\begin{equation}\label{e.grthUw}
l_\mu ( |y|-1 ) \leq m^U_\mu(y ) \quad \mbox{for every} \ y \in U
\end{equation}
and~\eqref{e.lipflug} the upper growth estimate
\begin{equation}\label{e.grthUwup}
m^U_\mu(y ) \leq L_\mu(|y|-1) \quad \mbox{provided that} \quad  y \in \left\{ z\in U \, :\, m^U_\mu(z ) \leq t \right\} \subseteq \overline U_1.
\end{equation}

We require a slightly stronger version of~\eqref{e.grthUw}, contained in the following lemma, which says that the sublevel sets of $m_\mu^U$ grow at a rate bounded uniformly from above. The proof is the same as for \eqref{e.movefron}, so we omit it. 

\begin{lem}
For every $0 < s \leq t$,
\begin{equation} \label{e.levsflug}
\left\{ y\in U \, :\, m^U_\mu(y ) \leq t \right\} \subseteq \left\{ y\in U \, :\, m^U_\mu(y ) \leq s \right\} + \overline B_{2+(t-s)/l_\mu}.
\end{equation}
\end{lem}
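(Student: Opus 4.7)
The plan is to mirror the proof of~\eqref{e.movefron} in Proposition~\ref{p.pptRe}(ii), adapted to the restricted-domain setting. Let $K := \{y \in U : m^U_\mu(y) \leq s\}$ and observe first that $\overline B_1 \subseteq K$: by~\eqref{e.imposition} the constant function $0$ is a subsolution of~\eqref{e.subsol}, so the constraint in~\eqref{e.mmuUdef} together with admissibility of $w\equiv 0$ forces $m^U_\mu \equiv 0$ on $\overline B_1$.

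The key step is the analog of~\eqref{lkjhbqscvezrj}, which in the present setting reads
\begin{equation*}
m^U_\mu(y) \;\geq\; s + l_\mu\bigl(\dist(y,K) - 2\bigr) \qquad \text{for all } y \in U \setminus K.
\end{equation*}
To obtain it, I would reuse the mollified-distance test function from the proof of Proposition~\ref{existMP}(v): with a sufficiently small fixed mollification parameter, one obtains a Lipschitz $\phi \in C(\Rd)$ which is a subsolution of~\eqref{e.metbas} on all of $\Rd$ (hence on $U$), is nonpositive on $K + \overline B_1$, and satisfies $\phi(y) \geq l_\mu\bigl(\dist(y,K) - 2\bigr)$ for a (possibly redefined, still positive) constant $l_\mu$. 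Then, for any $\eta > 0$, set
\begin{equation*}
w(y) := \begin{cases} m^U_\mu(y), & y \in K, \\ \max\bigl\{m^U_\mu(y),\, \phi(y) + s - \eta\bigr\}, & y \in U \setminus K. \end{cases}
\end{equation*}
Exactly as in the proof of~\eqref{e.movefron}, $w$ is a continuous USC viscosity subsolution of~\eqref{e.subsol} on~$U$: on $\intr K$ it equals $m^U_\mu$; on $U \setminus K$ it is the maximum of two subsolutions; and at any $y_0 \in \partial K \cap U$ one has $w \geq m^U_\mu$ on $U$ with $w(y_0) = m^U_\mu(y_0) = s$, so any smooth test function touching $w$ from above at $y_0$ also touches $m^U_\mu$ from above, which gives the subsolution inequality. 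Moreover $w \leq 0$ on $\overline B_1 \subseteq K$, so the maximality in~\eqref{e.mmuUdef} forces $w \leq m^U_\mu$ throughout $U$. For $y \in U \setminus K$ this reads $\phi(y) + s - \eta \leq m^U_\mu(y)$, and sending $\eta \to 0$ yields the claim.

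Given the displayed lower bound the conclusion is immediate: if $m^U_\mu(y) \leq t$ then either $y \in K$, in which case $\dist(y,K) = 0$, or $y \in U \setminus K$ and $t \geq s + l_\mu(\dist(y,K) - 2)$, so that $\dist(y,K) \leq 2 + (t-s)/l_\mu$ in both cases. This is exactly~\eqref{e.levsflug}. The only delicate point is the viscosity-subsolution check at the interface $\partial K \cap U$ for the glued function $w$; but this is a routine verification, identical to what is needed in the proof of~\eqref{e.movefron}, which is presumably why the authors are content to omit the proof.
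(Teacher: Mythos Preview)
Your proof is correct and follows essentially the same route as the paper's proof of~\eqref{e.movefron}: glue a subsolution that is nonpositive on $K+\overline B_1$ to $m^U_\mu$ on $U\setminus K$, invoke maximality, and read off the distance bound. The only cosmetic difference is that you insert the mollified-distance test function $\phi$ from the proof of Proposition~\ref{existMP}(v) directly into the gluing, whereas the paper's template uses $m_\mu(\cdot,K)$ (which is still a subsolution in $U$) and then applies~\eqref{esti.K}; since $\phi\leq m_\mu(\cdot,K)$ by maximality, the two choices are interchangeable.
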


Following the arguments of  Proposition \ref{p.pptRe}, one can also easily check that sublevel sets of $m^U_\mu(\cdot )$ are connected: more precisely,
$$
\mbox{if \ \ $\{m^U_\mu(\cdot )\leq t\} \subseteq \overline U_1$, \quad then \ \
$\{m^U_\mu(\cdot )\leq t\}$ \ \  is connected.}
$$

We next show that $m_\mu^U$ is a very good approximation for $m_\mu$ in each of the sublevel sets of $m_\mu^U$ which are contained in $\overline U_1$. This result is central to the localization result and hence the analysis in this paper. The proof relies on a novel change of the dependent variable. 

\begin{lem} \label{l.seige}
Assume $U \subseteq \Rd$ and $t>0$ are such that
\begin{equation}\label{e.encamped}
 \left\{ y\in U \,: \, m_\mu^U(y ) \leq t  \right\} \subseteq \overline U_1.
\end{equation}
Then
\begin{multline}\label{e.seige}
m_\mu^U(y ) - m_\mu(y,0 ) \leq \frac{4\Lambda L_\mu^3}{\mu l_\mu} \exp\left( \frac{4L_\mu}{l_\mu} \right) \exp\left( - \frac{\mu }{\Lambda L_\mu^2} ( t - m_\mu(y,0 ) )\right) \\ \mbox{for every} \quad y\in \left\{ z\in U\,:\, m_\mu^U(z ) \leq t  \right\}.
\end{multline}
\end{lem}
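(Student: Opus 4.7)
The plan is to prove the estimate by constructing a supersolution of the metric equation via a change of the dependent variable and invoking the comparison principle. Write $v := m_\mu(\cdot,0)$, $w := m_\mu^U$, and $V := \{y\in U : w(y) \leq t\}$. By the hypothesis~\eqref{e.encamped} and the interior Lipschitz estimate~\eqref{e.lipflug}, both $v$ and $w$ are $L_\mu$-Lipschitz on $V$, vanish on $\overline B_1$, and are viscosity solutions of $-\tr(A(y)D^2u) + H(Du,y) = \mu$ in $V \setminus \overline B_1$; by~\eqref{e.mmuUub}, $v \leq w$ in $V$, so the task is to bound $w - v$ from above. The strategy is to construct a supersolution $\bar u$ of the metric equation in $V\setminus\overline B_1$ with $\bar u \geq w$ on $\partial V \cup \partial B_1$; the comparison principle then gives $w \leq \bar u$ throughout $V$, hence $w - v \leq \bar u - v$ pointwise, which matches~\eqref{e.seige}.

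The novel change of dependent variable is to take
\[
\bar u(y) \;:=\; v(y) + \beta\,\exp\!\bigl(-\alpha\,(t - v(y))\bigr),
\]
with $\alpha := \mu/(\Lambda L_\mu^2)$ and $\beta > 0$ chosen below. The correction $\bar u - v$ decays as $y$ moves toward $\overline B_1$, producing the shape of the bound in~\eqref{e.seige}. To verify that $\bar u$ is a supersolution, compute at a smooth point of $v$: with $\gamma := \beta e^{-\alpha(t-v)}$, one has $D\bar u = (1+\alpha\gamma)Dv$ and $D^2\bar u = \alpha^2\gamma\,Dv\otimes Dv + (1+\alpha\gamma)D^2v$. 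Using $-\tr(AD^2 v) = \mu - H(Dv,\cdot)$ from the equation for $v$, together with the convexity of $H(\cdot,y)$ and the structural condition $H(0,y) \leq 0$ from~\eqref{e.imposition} (which jointly yield $H(\lambda p,y) \geq \lambda H(p,y)$ for $\lambda \geq 1$), and the bound $\langle Dv, ADv\rangle \leq \Lambda L_\mu^2$, one obtains
\[
-\tr(A D^2\bar u) + H(D\bar u, \cdot) - \mu \;\geq\; \alpha\gamma\bigl(\mu - \alpha\Lambda L_\mu^2\bigr) \;\geq\; 0,
\]
by our choice of $\alpha$. The formal computation passes to the viscosity setting via the standard test-function argument (touching $v$ from below); the value $\alpha = \mu/(\Lambda L_\mu^2)$ is precisely the balance between the extra diffusion introduced by the correction $\gamma$ and the positive reserve $\mu - H(Dv,\cdot)$ supplied by the equation.

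On $\partial B_1$ we have $\bar u = \beta e^{-\alpha t} \geq 0 = w$, automatically. On $\partial V = \{w = t\}$, setting $s := t - v \geq 0$, the required inequality $\bar u \geq t$ reduces to $\beta \geq s\,e^{\alpha s}$, so $\beta$ must dominate this quantity uniformly on $\partial V$. The main obstacle, and the technical heart of the proof, is to produce a uniform upper bound $s \leq s^*$ on $\partial V$ depending only on $(\mu,\Lambda,L_\mu,l_\mu)$: indeed, the constant in~\eqref{e.seige} matches exactly $s^*\,e^{\alpha s^*}$ with $s^* = 4\Lambda L_\mu^3/(\mu l_\mu)$, so that $\exp(\alpha s^*) = \exp(4L_\mu/l_\mu)$ accounts for the factor $\exp(4L_\mu/l_\mu)$ in the stated constant. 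I would produce such a bound by combining~\eqref{control2}, which gives $v \geq l_\mu(\dist(\cdot,B_1)-1)$, with the upper growth~\eqref{e.grthUwup} (forcing $\dist(y,B_1) \geq t/L_\mu$ on $\partial V$ via the global Lipschitz bound for $w$), and a telescoping comparison through a sequence of concentric sublevel sets of $w$ separated by a scale comparable to unity; the number of layers is $O(L_\mu/l_\mu)$, which is what produces the exponential factor $e^{4L_\mu/l_\mu}$ rather than a polynomial one. With $\beta$ so chosen, comparison yields $\bar u \geq w$ throughout $V$, giving~\eqref{e.seige}. The construction of the exponential barrier is natural once the balance $\alpha = \mu/(\Lambda L_\mu^2)$ is identified from the supersolution computation; the substantial work lies in the boundary analysis on $\partial V$.
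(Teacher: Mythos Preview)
Your supersolution construction is correct and is exactly the paper's: the function $\bar u = v + \beta e^{-\alpha(t-v)}$ with $\alpha = \mu/(\Lambda L_\mu^2)$ is precisely the paper's $\varphi(m_\mu)$ (their constant is $\beta = k e^{\alpha k}$, see below), and your verification that it is a supersolution matches Step~1 of their proof.

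The gap is in the boundary estimate. You correctly identify that the whole difficulty is to bound $s := t - v$ uniformly on $\partial V$ by some $s^*$ independent of $t$; the growth bounds you cite (\eqref{control2} and \eqref{e.grthUwup}) give only $s \leq t(1-l_\mu/L_\mu)$, which is $t$-dependent, and your proposed ``telescoping comparison through concentric sublevel sets'' is not a well-defined mechanism for removing this dependence. On each shell both $v$ and $w$ solve the same equation, so a layer-by-layer comparison has no leverage beyond the boundary data you are trying to control; nothing in that scheme produces a bound that stabilizes as the number of layers grows.

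The paper avoids this circularity by a \emph{bootstrap}. Rather than fixing $\beta$ in advance, they set $k := \max_{V}(w-v)$ and take $\varphi(s) = s + k\,e^{\alpha(s-t+k)}$, so that $\bar u \geq w$ on $\partial V$ holds \emph{by definition of $k$} (at the worst boundary point $y_0$ one has $v(y_0)=t-k$ and equality). Comparison then yields
\[
w(y)-v(y) \;\leq\; k\,\exp\!\bigl(\alpha(v(y)-t+k)\bigr) \quad \text{throughout } V,
\]
an inequality that still contains the unknown $k$. The clever step is to evaluate this at an \emph{interior} point $y_1$ chosen so that $v(y_1) = t-k-\alpha^{-1}$ and $|y_1-y_0| \leq \alpha^{-1}/l_\mu + 2$ (using \eqref{e.movefron}); combined with the Lipschitz bound $w(y_1) \geq t - L_\mu|y_1-y_0|$, this becomes an inequality of the form $k(1-e^{-1}) \leq C\Lambda L_\mu^3/(\mu l_\mu)$, which solves for $k$ independently of $t$. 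Feeding this back into the comparison inequality yields \eqref{e.seige}, and $e^{\alpha k} = e^{4L_\mu/l_\mu}$ is the origin of that factor. This self-referential use of the comparison estimate at an interior point is the idea your proposal is missing.
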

\begin{proof}
We divide the proof into two steps. In the first step we perturb~$m_\mu$ in order to permit a comparison to~$m_\mu^U$. The perturbation is strong near $\partial U$, which forces~$m_\mu$ to be larger than~$m_\mu^U$ on the boundary of~$U$. On the other hand, the perturbation is very small in the~$s$--sublevel set of~$m_\mu^U$ for $t-s\gg1$, which allows for the comparison to be useful. In the second step of the argument, we perform the comparison and eventually deduce~\eqref{e.seige}. Throughout we simplify the notation by writing $m_\mu(y) = m_\mu(y,0)$.

\smallskip

\emph{Step 1.}
We set $w(y):= \varphi\left( m_\mu(y ) \right)$, where $\varphi:\R_+ \to \R_+$ is given by
\begin{equation*}\label{}
\varphi(s) := s + k  \exp\left( \frac{\mu}{\Lambda L_\mu^2}(s-t+k) \right) \quad k:= \max \left\{ m^U_\mu(y ) - m_\mu(y,0 ) \,: \, m^U_\mu(y ) \leq t \right\}.
\end{equation*}
We claim that $w$ satisfies
\begin{equation}\label{e.pert1}
-\tr\left( A(y ) D^2w \right) + H(Dw,y ) \geq \mu \quad \mbox{in} \ U_1 \setminus \overline B_1
\end{equation}
as well as  
\begin{equation}\label{e.pert2}
w\geq 0\quad \mbox{on} \ \overline  B_1 \qquad \mbox{and} \qquad w \geq m_\mu^U \quad \mbox{on} \ \partial \left\{ y\in  U \,:\, m^U_\mu(y ) \leq t \right\}.
\end{equation}
To check~\eqref{e.pert1}, we perform formal computations assuming that $m_\mu$ is smooth. While we have only that $m_\mu$ is Lipschitz, in general, it is routine to make the argument rigorous in the viscosity sense by doing the  same computation on a smooth test function. Computing the derivatives of $w$, we find
\begin{align*}
Dw(y) & = \left( 1 + \frac{\mu k}{\Lambda L_\mu^2} \exp\left( \frac{\mu}{\Lambda L_\mu^2} \left( m_\mu(y) - t +k \right) \right)  \right) Dm_\mu(y),  \\
D^2w(y) & = \left( 1 + \frac{\mu k}{\Lambda L_\mu^2} \exp\left( \frac{\mu}{\Lambda L_\mu^2} \left( m_\mu(y) - t +k \right) \right)  \right) D^2m_\mu(y) \\ & \qquad +\frac{\mu^2k}{\Lambda^2 L_\mu^4}\exp\left( \frac{\mu}{\Lambda L_\mu^2} \left( m_\mu(y) - t +k \right) \right) Dm_\mu(y) \otimes Dm_\mu(y).
\end{align*}
Before we attempt to evaluate the left side of~\eqref{e.pert1}, we note that, since $H$ is convex and $H(0,y) \leq 0$, we have, for every $\lambda \geq 0$,
\begin{equation*} \label{}
H( (1+\lambda)p,y) \geq (1+\lambda)H(p,y).
\end{equation*}
In order to bound the trace of $A(y)$ and the second term in the expression for $D^2w$, we recall that $|A(y)| \leq \Lambda$ from the assumption~\eqref{e.sigbnd} and that $|Dm_\mu(y)|\leq L_\mu$ for every $y\in U_1$, which follows from~\eqref{e.encamped} and~Proposition~\ref{p.lipschitz}. Using these and the equation for $m_\mu$, we obtain, for every $y\in U_1\setminus B_1$,
\begin{align*}
-\tr\left( A(y ) D^2w \right) + H(Dw,y ) & \geq \left( 1 + \frac{\mu k}{\Lambda L_\mu^2} \exp\left( \frac{\mu}{\Lambda L_\mu^2} \left( m_\mu(y) - t +k \right) \right)  \right) \mu \\
& \qquad  - \frac{\mu^2k}{\Lambda^2 L_\mu^4}\exp\left( \frac{\mu}{\Lambda L_\mu^2} \left( m_\mu(y) - t +k \right) \right) \Lambda L_\mu^2 \\
& = \mu.
\end{align*}
To check~\eqref{e.pert2}, we use the definition of $k$ and $\varphi$ and the fact that $$\partial \left\{ y\in  U \,:\, m^U_\mu(y ) \leq t \right\} \subseteq \left\{ y\in  U \,:\, m^U_\mu(y ) = t \right\},$$ which follows from the continuity of $m_\mu^U$, to deduce, for each $y \in \partial \left\{ z\in  U \,:\, m^U_\mu(z ) \leq t \right\} $,
\begin{align*} \label{}
w(y) & \geq m_\mu(y) + k\exp\left(  \frac{\mu}{\Lambda L_\mu^2} \left( m_\mu(y) - t +k \right) \right) \\
& \geq m_\mu(y) + k\exp\left(  \frac{\mu}{\Lambda L_\mu^2} \left( m^U_\mu(y) - t \right) \right) \geq m_\mu(y) + k \geq m_\mu^U(y).
\end{align*}
The nonnegativity of $w$ on $\overline B_1$ is obvious. This completes the proof of~\eqref{e.pert1} and~\eqref{e.pert2}.

\smallskip

\emph{Step 2.}
By  comparison principle (c.f.~\cite{AT}), we obtain that
\begin{equation} \label{e.cmppert}
w \geq m_\mu^U \quad \mbox{in} \ \{ y \in   U \,:\, m^U_\mu \leq t\}.
\end{equation}
Indeed, while the comparison principle requires a strict inequality, we may compare $w$ to $(1-\ep)m_\mu^U$ for $\ep > 0$ and then send $\ep  \to 0$, using the observation that $(1-\ep) m_\mu^U$ is a strict subsolution of~\eqref{e.metbas} in $U$ by the convexity of $H$ and the fact that $0$ is a strict subsolution for $\mu > 0$. See~\cite[Lemma 2.4]{AT} for details.

Expressing~\eqref{e.cmppert} in terms of $m_\mu$ yields
\begin{equation}\label{e.gumpt}
m_\mu(y ) - m_\mu^U(y ) \geq - k \exp\left( \frac{\mu}{\Lambda L_\mu^2}(m_\mu(y )-t+k) \right) \quad \mbox{in} \ \left\{ x\in   U  \,: \, m_\mu^U(x ) \leq t \right\}.
\end{equation}
We complete the argument by using the Lipschitz estimate and~\eqref{e.gumpt} to estimate $k$ from above, and then feed the result back into~\eqref{e.gumpt}. We first show the following rough bound on $k$: 
\be\label{e.roughbdk}
k\leq t(1-l_\mu/L_\mu).
\ee 
Since $k= t-\min\left\{m_\mu(y,0 )\,: \, m^U_\mu(y ) =  t\right\}$, we may select $y_0\in U$ such that $m^U_\mu(y_0 ) = t$ and $k = t - m_\mu(y_0 )$. 
Then \eqref{e.grthUwup} implies that $|y_0|\geq t/L_\mu+1$ and \eqref{control2} gives that 
$(l_\mu/L_\mu) t \leq m_\mu(y_0)= t-k$, which yields \eqref{e.roughbdk}. 

\smallskip

Fix $t_1\in [0, t-k]$ to selected below. By~\eqref{e.movefron}, there exists $y_1\in \Rd$ such that $m_\mu(y_1,0 )=t_1$ and $|y_1-y_0|\leq (t-k-t_1)/l_\mu+2$. Using~\eqref{e.gumpt} at~$y_1$, and the Lipschitz estimate \eqref{e.lipflug}, we deduce
\be\label{kjhbefvz}
-k \exp\left( \frac{\mu}{\Lambda L_\mu^2}(t_1-t+k) \right)\leq t_1 - m_\mu^U(y_1 ) \leq t_1-t+ \frac{L_\mu}{l_\mu}(t-k-t_1)+2L_\mu.
\ee
Fix $\ep:=\exp(-1)$ and set $t_1:=  t - k - \mu^{-1}(\Lambda L_\mu^2)$. Observe that, in view of~\eqref{e.roughbdk}, we have $t_1\geq 0$ provided that $t\geq (\Lambda L_\mu^3)/(\mu l_\mu)$. Then \eqref{kjhbefvz} gives
$$
k \leq \frac{1}{1-\ep} \left( -\frac{\Lambda L_\mu^2}{\mu}+ \frac{\Lambda L_\mu^3}{\mu l_\mu}+2L_\mu\right)\leq 4 \frac{\Lambda L_\mu^3}{\mu l_\mu}.
$$
Inserting this into~\eqref{e.gumpt} yields~\eqref{e.seige} for $t\geq (\Lambda L_\mu^3)/(\mu l_\mu)$. We conclude by noting that~\eqref{e.seige}  always holds for $t\leq \Lambda L_\mu^3/ (\mu l_\mu)$.
\end{proof}

We next use the Lipschitz and growth estimates to translate the previous result that $m_\mu \approx m_\mu^U$ into a result asserting that, under the same hypotheses, the corresponding level sets are close. We define a constant $a_\mu$, used in the rest of this section, by
\begin{equation} \label{e.amu}
a_\mu:=1+ \frac{\Lambda L_\mu^2}{\mu l_\mu}\left( \frac{4L_\mu}{l_\mu} + \log\left( \frac{4\Lambda L_\mu^3}{\mu l_\mu^2} \right) \right).
\end{equation}
We note that, for $C>0$ depending only on $(q,\Lambda)$ and an upper bound for $\mu$, 
\begin{equation} \label{e.amubnd}
a_\mu \leq C l_\mu^{-2} \mu^{-1} \leq C\mu^{-3}.
\end{equation}

\begin{cor}\label{c.pillage}
Let $t>0$ and assume $U \subseteq \Rd$ is  such that
\begin{equation}\label{e.scribb}
\left\{ x\in \Rd \,: \, m_\mu^U(x ) \leq t  \right\} \subseteq \overline U_{2+a_\mu}.
\end{equation}
Then 
\begin{equation} \label{e.stoucha}
0 \leq m_\mu^U(t ) - m_\mu(t,0 ) \leq l_\mu \quad \mbox{for every} \ y\in \left\{ z\in U\,:\, m_\mu^U(z ) \leq t \right\}
\end{equation}
and
\begin{equation}\label{e.scrubb}
\left\{ y\in U\,:\, m_\mu^U(y ) \leq t \right\} \subseteq \Re_{\mu,t}  \subseteq\left\{ y\in U\,:\, m_\mu^U(y ) \leq t \right\} + \overline B_3.
\end{equation}
\end{cor}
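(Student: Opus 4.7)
The plan is to apply Lemma~\ref{l.seige} at the auxiliary level $T := t + (a_\mu-1)l_\mu$ and then to promote the resulting pointwise estimate into the desired geometric containments by a connectedness argument. I would first check that Lemma~\ref{l.seige} is applicable at level $T$: by the growth estimate \eqref{e.levsflug},
\[
\{m_\mu^U \leq T\} \subseteq \{m_\mu^U \leq t\} + \overline B_{2+(T-t)/l_\mu} = \{m_\mu^U \leq t\} + \overline B_{1+a_\mu},
\]
and combining with the standing hypothesis $\{m_\mu^U \leq t\} \subseteq \overline U_{2+a_\mu}$ yields $\{m_\mu^U \leq T\} \subseteq \overline U_1$.

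Next I would substitute the definition~\eqref{e.amu} of $a_\mu$ into the conclusion of Lemma~\ref{l.seige}. For any $y \in \{m_\mu^U \leq T\}$ with $m_\mu(y,0) \leq t$, the gap $T - m_\mu(y,0)$ is at least $(a_\mu-1)l_\mu$, and by the choice of $a_\mu$ the exponential factor equals
\[
\exp\!\left(-\frac{\mu(a_\mu-1)l_\mu}{\Lambda L_\mu^2}\right) = \exp\!\left(-\frac{4L_\mu}{l_\mu}\right)\cdot \frac{\mu l_\mu^2}{4\Lambda L_\mu^3},
\]
which cancels the prefactor in Lemma~\ref{l.seige} exactly, yielding the key pointwise bound
\[
(\star) \qquad m_\mu^U(y) - m_\mu(y,0) \leq l_\mu \qquad \text{for every } y \in \{m_\mu^U \leq T\} \cap \Re_{\mu,t}.
\]
Since $m_\mu \leq m_\mu^U$, one has $\{m_\mu^U \leq t\} \subseteq \Re_{\mu,t}\cap\{m_\mu^U \leq T\}$, so $(\star)$ immediately delivers \eqref{e.stoucha} (whose lower bound is just \eqref{e.mmuUub}) as well as the first containment in \eqref{e.scrubb}.

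For the remaining containment in \eqref{e.scrubb}, I would first upgrade $(\star)$ to the global inclusion $\Re_{\mu,t} \subseteq \{m_\mu^U \leq T\}$ by a connectedness argument. Set $A := \Re_{\mu,t} \cap \{m_\mu^U \leq T\}$. This set is closed in $\Re_{\mu,t}$ and contains $\overline B_1$. To see that $A$ is also open in $\Re_{\mu,t}$, it suffices to check that $\Re_{\mu,t} \cap \{m_\mu^U = T\} = \emptyset$: any common point would force $m_\mu^U - m_\mu(\cdot,0) \geq T-t = (a_\mu-1)l_\mu > l_\mu$ (one reads off from \eqref{e.amu} that $a_\mu > 2$), contradicting $(\star)$. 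The connectedness of $\Re_{\mu,t}$ provided by Proposition~\ref{p.pptRe} then forces $A = \Re_{\mu,t}$. Reapplying $(\star)$ on this enlarged domain gives $\Re_{\mu,t} \subseteq \{m_\mu^U \leq t + l_\mu\}$, and one further application of \eqref{e.levsflug} (with $s = t$) produces $\{m_\mu^U \leq t + l_\mu\} \subseteq \{m_\mu^U \leq t\} + \overline B_3$, completing \eqref{e.scrubb}.

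The main obstacle is the precise algebraic cancellation in the second paragraph: the constant $a_\mu$ has been calibrated exactly so that the exponential decay in Lemma~\ref{l.seige} at level $T$ reduces the error to $l_\mu$. Once $(\star)$ is in hand, the connectedness step is geometrically natural, but it does depend on the small-but-crucial margin $(a_\mu - 1)l_\mu > l_\mu$ to separate $\Re_{\mu,t}$ from the boundary $\{m_\mu^U = T\}$.
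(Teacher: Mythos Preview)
Your proof is correct and follows essentially the same route as the paper: your level $T = t + (a_\mu-1)l_\mu$ is exactly the paper's $t+h$ (since $a_\mu = 1 + h/l_\mu$), and the algebraic cancellation you describe is identical to the paper's computation of~\eqref{e.stoucha}.

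The one difference worth noting is in the derivation of the second inclusion of~\eqref{e.scrubb}. The paper shows $m_\mu(\cdot,0) > t - l_\mu$ on $\partial\{m_\mu^U \leq t+\ep\}$, deduces $\Re_{\mu,t-l_\mu} \subseteq \{m_\mu^U \leq t\}$ by connectedness of $\Re_{\mu,t-l_\mu}$, and then invokes~\eqref{e.movefron}. You instead prove $\Re_{\mu,t} \subseteq \{m_\mu^U \leq T\}$ directly by a clopen argument, then use $(\star)$ once more and~\eqref{e.levsflug}. Both are valid, but your version hinges on the strict margin $(a_\mu-1)l_\mu > l_\mu$, i.e.\ $a_\mu > 2$. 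This is true, but it does not quite ``read off'' from~\eqref{e.amu} alone: one needs to observe (for instance) that $\tfrac{4\Lambda L_\mu^3}{\mu l_\mu} \geq \tfrac{4\Lambda L_\mu^2}{\mu} \geq l_\mu$ once $L_\mu \geq \mu/(4\Lambda)$, and that the $\exp(4L_\mu/l_\mu)$ factor comfortably absorbs any slack. The paper's variant avoids this small bookkeeping.
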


\begin{proof} 
Define the constant
\begin{equation*} \label{}
h:=  \frac{\Lambda L_\mu^2}{\mu}\left( \frac{4L_\mu}{l_\mu} + \log\left( \frac{4\Lambda L_\mu^3}{\mu l_\mu^2} \right) \right)
\end{equation*}
and observe that, due to~\eqref{e.levsflug},~\eqref{e.scribb} and the fact that $a_\mu = 1+ h/l_\mu$,
\begin{equation} \label{e.scroub}
\left\{ y\in U\,:\, m_\mu^U(y ) \leq t + h \right\} \subseteq \left\{ y\in U \,:\, m_\mu^U (y ) \leq t \right\} + \overline B_{2+h/l_\mu} \subseteq \overline U_1.
\end{equation}
Therefore, applying~\eqref{e.seige} with $t+h$ in place of $t$, we find that
\begin{align*} \label{}
l_\mu & = \frac{4\Lambda L_\mu^3}{\mu l_\mu} \exp\left( \frac{4L_\mu}{l_\mu} \right) \exp \left(- \frac{\mu }{\Lambda L_\mu^2} h \right) && (\mbox{definition of } h)\\ 
& \geq \frac{4\Lambda L_\mu^3}{\mu l_\mu} \exp\left( \frac{4L_\mu}{l_\mu} \right) \exp\left( - \frac{\mu }{\Lambda L_\mu^2} ( t +h - m_\mu(\cdot ) )\right) && ( \mbox{in} \  \left\{ m_\mu^U(\cdot ) \leq t  \right\} ) \\
& \geq m^U_\mu(\cdot ) - m_\mu(\cdot,0 ) && ( \mbox{by~\eqref{e.seige},~\eqref{e.scroub}}).
\end{align*}
This completes the proof of~\eqref{e.stoucha}.
 
Next we show that \eqref{e.scrubb} holds. The first inclusion is immediate from~\eqref{e.mmuUub}. To show the second inclusion, we observe that, according to~\eqref{e.stoucha}, for every $\ep > 0$,
\begin{equation*}\label{}
m_\mu(\cdot,0 )> t-l_\mu \quad \mbox{on} \ \partial \left\{ y\in U \,:\, m^U_\mu(y ) \leq t + \ep \right\}.
\end{equation*}
Since $\Re_{\mu,t-l_\mu}$ is connected, after sending $\ep \to 0$ we conclude that  
\begin{equation*}\label{}
\Re_{\mu,{t-l_\mu}} \subseteq \left\{ y\in U \,:\, m^U_\mu(y ) \leq t \right\}.
\end{equation*}
The result now follows from \eqref{e.movefron} which implies $\Re_{\mu,t} \subseteq \Re_{\mu,t-l_\mu} + \overline B_3$. 
\end{proof}


\subsection{The rigorous justification of~\eqref{e.crutid1},~\eqref{e.crutid2} and~\eqref{e.dpp.h}}\label{e.justify}

Using the estimates in Corollary~\ref{c.pillage}, which assert that $m_\mu\approx m_\mu^U$ in sublevel sets of $m^U_\mu$ which are a unit distance from the boundary of~$U$, we construct a random ``moving front" $\Se_{\mu,t}$, close to $\Re_{\mu,t}$, which is ``measurable with respect to the $\sigma$-algebra generated by the environment inside itself." This is the key to obtaining the rigorous versions of~\eqref{e.dpp.h},~\eqref{e.crutid1} and~\eqref{e.crutid2} we need to complete the proof of Proposition~\ref{p.fluc}.

In order to build $\Se_{\mu,t}$ as well as interpret sums like on the right side of~\eqref{e.crutid2}, it is convenient to introduce a discretization of the set of compact subsets of $\Rd$ which contain $\overline B_1$. We denote the latter by $\mathcal K$ and endow it will the \emph{Hausdorff metric} $\dist_H$ defined by
\begin{align*}\label{}
\dist_H(E,F) := & \inf_{x\in E} \sup_{y\in F} |x-y| \vee \inf_{y\in F} \sup_{x\in E} |x-y| \\ 
= & \inf \left\{ \ep > 0 : E \subseteq F+B_\ep \ \mbox{and} \ F \subseteq E+B_\ep \right\}.
\end{align*}
The metric space $(\mathcal K,\dist_H)$ is locally compact (see Munkres~\cite{Munk}) and thus there exists a partition $(\Gamma_i)_{i\in \N}$ of $\mathcal K$ into Borel subsets of $\mathcal K$ of diameter at most $1$. That is, $\Gamma_i \subseteq \mathcal K$ and $\diam_H(\Gamma_i) \leq 1$ for all $i\in \N$ and $\Gamma_i \cap \Gamma_j = \emptyset$ if $i,j\in\N$ are such that $i\neq j$.

For each $i\geq 1$, we take $K_i$ to be the closure of the union of the elements of $\Gamma_i$. Observe that
\begin{equation}\label{e.caKK}
K \in \Gamma_i \qquad \mbox{implies that} \qquad K \subseteq K_i  \subseteq K + \overline B_{1}.
\end{equation}
We introduce, for each $i\in \N$, compact sets
\begin{equation*} \label{}
K_i \subseteq K_i' \subseteq K_i'' \subseteq \widetilde K_i
\end{equation*}
by setting (with the constant $a_\mu>0$ defined in~\eqref{e.amu})
\begin{equation*}\label{}
K_i' := K_i + \overline B_{2+a_\mu}, \qquad K_i'':= K_i' + \overline B_{7} \qquad \mbox{and} \qquad \widetilde K_i:= K_i''+ \overline B_{4}.
\end{equation*}
Here are the reasons we must introduce so many sets: 
\begin{itemize}

\item $K_i'$ provides extra room so that we can apply the localization results of the Subsection~\ref{ss.localize};

\item $K_i''$ must be a little larger than $K_i'$ so that the partition $\{ E_i(t) \}_{i\in \N}$ of $\Omega$ that we construct below has the property that $E_i(t) \in \G(K_i'')$, which forces the moving front $\Se_{\mu,t}$ we build to localize properly;

\item $\widetilde K_i$ is larger than $K_i''$ so that $\dist(K_i''+\overline B_3\,,\Rd\setminus \widetilde K_i) = 1$ and thus the finite range of dependence hypothesis yields
\begin{equation}\label{e.KtKindp}
\G\left(K_i'' +\overline B_3 \right) \qquad \mbox{and} \qquad \G\left( \Rd \!\setminus \! \widetilde K_i \right) \qquad \mbox{are \   independent.}
\end{equation}
\end{itemize}

We next use the partition $\{ \Gamma_i \}_{i\in\N}$ of $\mathcal K$ to construct, for each $t>0$, a partition of $\Omega$ into disjoint events $\{ E_i(t) \}_{i\in \N}$ which are approximations of the event  $\{ \omega\in\Omega\,:\,\Re_{\mu,t}(\omega) \in \Gamma_i\}$:
\begin{equation}\label{e.Eitintn}
`` \ E_i(t) \approx \left\{\omega\in \Omega\,:\, \Re_{\mu,t}(\omega) \in \Gamma_i \right\}. \ \mbox{"}
\end{equation}
The reason we do not define $E_i(t)$ with equality in~\eqref{e.Eitintn} is due to the fact that, as we have explained above, the $\Re_{\mu,t}$'s do not properly ``localize."  
The events $E_i(t)$ we introduce are better localized in the sense made precise by Lemma~\ref{l.loopers}.

To construct $E_i(t)$, we first make each of the $\Gamma_i$'s slightly larger by setting
\begin{equation*}\label{}
\widetilde \Gamma_i := \left\{ K \in \mathcal K \, : \, K \subseteq K_i \subseteq K+\overline B_4 \right\}
\end{equation*}
and then define, for every $t>0$ and $i\in \N$,
\begin{equation*}\label{}
F_i(t):=\left\{ \omega\in \Omega \,:\, \left\{ y\in K'_i\,:\, m^{K'_i}_\mu(y ) \leq t \right\} \in \widetilde \Gamma_i \right\}.
\end{equation*}
To make a partition of $\Omega$, we modify the $F_i(t)$'s by setting
\begin{equation*}\label{}
E_1(t) := F_1(t) \qquad \mbox{and} \quad E_{i+1}(t) := F_{i+1}(t) \setminus \bigcup_{j\leq i} E_{j}(t) \qquad \mbox{for every} \ i \in\N.
\end{equation*}

We now collect some properties of the families $\{F_i(t)\}$ and $\{E_i(t)\}$. 

\begin{lem}\label{lem:pptFiEi} Fix $t>0$. Then the family $(E_i(t))$ is a measurable partition of $\Omega$. Moreover, if $\omega\in F_i(t)$ and $y\in K_i$, we have 
\begin{equation}\label{e.keylocal2}
 \Re_{\mu,t} \subseteq K_i + \overline B_3,  \qquad  K_i \subseteq \Re_{\mu,t} + \overline B_4,\qquad 
 \dist_H(\partial \Re_{\mu,t}, \partial K_i)\leq 7
\end{equation}
and 
\begin{equation}\label{e.keylocal1}
 \left|m^{K_i'}_\mu(y )-m_\mu(y,0 )\right|  \leq 9L_\mu.
\end{equation}
 \end{lem}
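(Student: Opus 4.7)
The overall plan is to bootstrap off Corollary~\ref{c.pillage}: the whole reason to inflate $K_i$ to $K_i' = K_i + \overline B_{2+a_\mu}$ is to ensure that its hypothesis \eqref{e.scribb} holds as soon as $\{m_\mu^{K_i'}\leq t\}\subseteq K_i$, in which case the corollary simultaneously delivers the pointwise comparison $0\leq m_\mu^{K_i'} - m_\mu(\cdot,0)\leq l_\mu$ and a Hausdorff-close sandwich between the corresponding sublevel sets. For the partition property I would start by fixing $\omega\in\Omega$: the partition $\{\Gamma_i\}$ of $\mathcal K$ yields a unique $i$ with $\Re_{\mu,t}\in\Gamma_i$, whence $\Re_{\mu,t}\subseteq K_i\subseteq \Re_{\mu,t}+\overline B_1$ by \eqref{e.caKK}. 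Since $m^{K_i'}_\mu\geq m_\mu(\cdot,0)$ on $K_i'$ by \eqref{e.mmuUub}, the set $S_i := \{y\in K_i' : m^{K_i'}_\mu(y)\leq t\}$ is trapped inside $\Re_{\mu,t}\subseteq K_i$, so \eqref{e.scribb} is satisfied and Corollary~\ref{c.pillage} gives $\Re_{\mu,t}\subseteq S_i+\overline B_3$. Combined with $K_i\subseteq \Re_{\mu,t}+\overline B_1$, this yields $K_i\subseteq S_i+\overline B_4$, i.e.\ $S_i\in\widetilde\Gamma_i$ and $\omega\in F_i(t)$. Since $m^{K_i'}_\mu$ is $\G(K_i')$-measurable, so is each $F_i(t)$, and the $E_i(t)$ then form a measurable partition by construction.

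For \eqref{e.keylocal2} I would fix $\omega\in F_i(t)$, so that by definition of $\widetilde\Gamma_i$ one has $S_i\subseteq K_i\subseteq S_i+\overline B_4$. In particular $S_i\subseteq K_i\subseteq \overline{(K_i')}_{2+a_\mu}$, so the hypothesis of Corollary~\ref{c.pillage} is again met, and the corollary yields the sandwich $S_i\subseteq \Re_{\mu,t}\subseteq S_i + \overline B_3$. Stacking this against $S_i\subseteq K_i\subseteq S_i+\overline B_4$ immediately produces the desired inclusions $\Re_{\mu,t}\subseteq K_i + \overline B_3$ and $K_i\subseteq \Re_{\mu,t}+\overline B_4$. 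The Hausdorff bound $\dist_H(\partial\Re_{\mu,t},\partial K_i)\leq 7$ then drops out of a short boundary-chase from these two containments (a point of $\partial\Re_{\mu,t}$ lies within $3$ of $K_i$ and admits nearby points outside $\Re_{\mu,t}$, which via $K_i\subseteq \Re_{\mu,t}+\overline B_4$ must come within $4$ of the complement of $K_i$; the reverse direction is symmetric).

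Finally, for \eqref{e.keylocal1} I would combine Corollary~\ref{c.pillage}, which already provides $0\leq m_\mu^{K_i'}(z) - m_\mu(z,0)\leq l_\mu$ for $z\in S_i$, with the interior Lipschitz estimates furnished by Proposition~\ref{existMP}(ii) and \eqref{e.flughagen}. For $y\in K_i$, the inclusion $K_i\subseteq S_i+\overline B_4$ furnishes $z\in S_i$ with $|y-z|\leq 4$; since $a_\mu\geq 1$, the segment $[y,z]$ sits comfortably inside $(K_i')_1$, on which $m_\mu^{K_i'}$ is $L_\mu$-Lipschitz by \eqref{e.flughagen}, while $m_\mu(\cdot,0)$ is $L_\mu$-Lipschitz on any sublevel set containing $[y,z]$ by Proposition~\ref{existMP}(ii). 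Two triangle inequalities then deliver $|m_\mu^{K_i'}(y)-m_\mu(y,0)|\leq l_\mu + 8L_\mu\leq 9L_\mu$, using $l_\mu\leq L_\mu$. The main obstacle throughout is the geometric bookkeeping---verifying \eqref{e.scribb} on the correct sets and tracking which sublevel set each Lipschitz bound governs---rather than any new analytic input; all the real content has already been packaged into Corollary~\ref{c.pillage} and the localization machinery of Subsection~\ref{ss.localize}.
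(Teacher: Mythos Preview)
Your proposal follows essentially the same strategy as the paper: use \eqref{e.mmuUub} and Corollary~\ref{c.pillage} to show $\Re_{\mu,t}\in\Gamma_i\Rightarrow\omega\in F_i(t)$ (hence the $E_i(t)$ partition $\Omega$), then for $\omega\in F_i(t)$ combine the $\widetilde\Gamma_i$-containments with \eqref{e.scrubb} to get the inclusions in \eqref{e.keylocal2}, and finally use \eqref{e.stoucha} plus a $4$-step Lipschitz jump for \eqref{e.keylocal1}. These parts match the paper almost verbatim.

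The one place your argument diverges is the bound $\dist_H(\partial\Re_{\mu,t},\partial K_i)\le 7$. The paper does \emph{not} try to extract this directly from $\Re_{\mu,t}\subseteq K_i+\overline B_3$ and $K_i\subseteq\Re_{\mu,t}+\overline B_4$; instead it routes through the intermediate set $S_i=\{m_\mu^{K_i'}\le t\}$, bounding $\dist_H(\partial S_i,\partial K_i)\le 4$ (from $S_i\subseteq K_i\subseteq S_i+\overline B_4$) and $\dist_H(\partial\Re_{\mu,t},\partial S_i)\le 3$ (from \eqref{e.scrubb}) separately, then adds. Your sketched ``boundary chase'' is not correct as written: given $p\in\partial\Re_{\mu,t}$ and $q$ near $p$ with $q\notin\Re_{\mu,t}$, the inclusion $K_i\subseteq\Re_{\mu,t}+\overline B_4$ only tells you that points at distance \emph{greater than} $4$ from $\Re_{\mu,t}$ lie outside $K_i$; it says nothing about $q$, whose distance to $\Re_{\mu,t}$ is near zero. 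In general, mutual containments $A\subseteq B+\overline B_r$, $B\subseteq A+\overline B_s$ do not bound $\dist_H(\partial A,\partial B)$ without further structure. The paper's detour through $S_i$ works because in each of the two nested chains one set is contained in the other and both are sublevel sets of continuous functions, which is what makes the boundary comparison legitimate. Since you have already established $S_i\subseteq K_i\subseteq S_i+\overline B_4$ and $S_i\subseteq\Re_{\mu,t}\subseteq S_i+\overline B_3$, you should simply follow the paper's triangle-inequality route here rather than the direct chase.
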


\begin{proof} To prove that $\{ E_i(t)\}_{i\in \N}$ is a partition of $\Omega$, we note that, for any $\omega\in \Omega$,  
\begin{equation}\label{e.Rmutob}
\Re_{\mu,t}(\omega) \in \Gamma_i \quad \mbox{implies that} \quad \omega \in F_i(t). 
\end{equation}
Indeed, notice that $\{ m^{K_i'}_\mu(\cdot ) \leq t\} \subseteq \Re_{\mu,t}$ by~\eqref{e.mmuUub}, and thus $\Re_{\mu,t}(\omega) \in \Gamma_i$ implies that 
$$
\{y\in K_i' \,:\, m^{K_i'}_\mu(y ) \leq t\} \subseteq K_i\subseteq (K_i')_{2+a_\mu}.
$$ Corollary~\ref{c.pillage} and the fact that $\Re_{\mu,t}(\omega) \in \Gamma_i$ then imply  that
\begin{equation*}\label{}
K_i \subseteq \Re_{\mu,t} + \overline B_1 \subseteq \big\{ m^{K_i'}_\mu(\cdot ) \leq t \big\} + \overline B_4,
\end{equation*}
so that~\eqref{e.Rmutob} holds. Since $\{ \Gamma_i \}_{i\in\N}$ is a partition of $\mathcal K$,  we deduce from~\eqref{e.Rmutob} that
\begin{equation*}\label{}
\Omega = \bigcup_{i\in \N} \{ \omega\in \Omega \,:\, \Re_{\mu,t} \in \Gamma_i \}\subseteq   \bigcup_{i\in \N} F_i(t)\subseteq\Omega.
\end{equation*}
Then it is clear that $\{ E_i(t)\}_{i\in \N}$ is a partition of $\Omega$.

\smallskip

Next we show~\eqref{e.keylocal2}. Fix $\omega\in F_i(t)$. 
Owing to the definition of $K_i'$, we have 
$$ \left\{ y\in K_i' \,:\, m^{K_i'}_\mu(y ) \leq t \right\}  \subseteq (K_i')_{2+a_\mu}.
$$ In particular, we can apply Corollary \ref{c.pillage} to $U:={\rm int}(K_i')$ to deduce, by the definition of $F_i(t)$ and \eqref{e.scrubb}, that
\be\label{khbslig:,kjbe}
K_i\subseteq \left\{ y\in U\,:\, m_\mu^U(y ) \leq t \right\} +\overline B_4\subseteq \Re_{\mu,t}(\omega) +\overline B_4, 
\ee
and
$$
\Re_{\mu,t}(\omega)  \subseteq\left\{ y\in U\,:\, m_\mu^U(y ) \leq t \right\} + \overline B_3\subseteq K_i+\overline B_3.
$$
To show the last statement of \eqref{e.keylocal2}, let us notice that, by definition of $\tilde \Gamma_i$, 
$$
\left\{ y\in U\,:\, m_\mu^U(y ) \leq t \right\}\subseteq K_i\subseteq \left\{ y\in U\,:\, m_\mu^U(y ) \leq t \right\}+\overline B_4.
$$
Hence  
$$
\dist_H\left( \partial \left\{ y\in U\,:\, m_\mu^U(y ) \leq t \right\}, \partial K_i\right)\leq 4.
$$
On another hand, \eqref{e.scrubb}  implies that
$$
\dist_H\left(\partial \Re_{\mu,t}, \partial \left\{ y\in U\,:\, m_\mu^U(y ) \leq t \right\}\right) \leq 3. 
$$
Therefore
$
\dist_H\left(\partial \Re_{\mu,t}(\omega), \partial K_i\right) \leq 7
$
and the proof of \eqref{e.keylocal2} is complete. 

\smallskip

Let us finally check that \eqref{e.keylocal1} holds: by \eqref{e.stoucha} we have
\be\label{jkqbsflgjdnf:o}
0 \leq m_\mu^U(\cdot ) - m_\mu(\cdot,0 ) \leq l_\mu \quad \mbox{in} \ \left\{ y\in U\,:\, m_\mu^U(y ) \leq t \right\}.
\ee
If $x\in K_i\setminus \left\{ y\in U\,:\, m_\mu^U(y ) \leq t \right\}$, then, by definition of $F_i(t)$, there exists $x_0\in U$ such that 
$ m_\mu^U(x_0 ) \leq t$ and $|x-x_0|\leq 4$. We conclude by Lipschitz estimate and \eqref{jkqbsflgjdnf:o} that
\begin{equation*}
\left|m_\mu^U(x ) - m_\mu(x,0 )\right| \leq 8L_\mu +
\left|m_\mu^U(x_0 ) - m_\mu(x_0,0 )\right| \leq 9L_\mu. \qedhere
\end{equation*}
\end{proof}

We next verify that $E_i(t)$ is ``localized" in the sense that it belongs to $\mathcal G(K_i'')$. 

\begin{lem} \label{l.loopers}
For every $0 < s \leq t$ and $i,j \in \N$,
\begin{equation} \label{e.loopers}
F_i(s) \cap F_j(t) \neq \emptyset \qquad \mbox{implies that} \qquad E_i(s) \in \G(K_j'').
\end{equation}
In particular, $E_i(t) \in \G(K_i'')$ for every $i\in\N$.
\end{lem}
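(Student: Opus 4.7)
My plan is to reduce the measurability statement to deterministic set-containments among the thickenings $K_l' := K_l+\overline B_{2+a_\mu}$ and $K_l'' := K_l+\overline B_{9+a_\mu}$, and then use the localization machinery to handle the residual discrepancies. From the hypothesis, I would pick a witness $\omega^*\in F_i(s)\cap F_j(t)$. Applying Lemma~\ref{lem:pptFiEi} at $\omega^*$, once via $F_j(t)$ and once via $F_i(s)$, and chaining through $\Re_{\mu,s}(\omega^*)\subseteq\Re_{\mu,t}(\omega^*)$, yields $K_i\subseteq \Re_{\mu,s}(\omega^*)+\overline B_4\subseteq \Re_{\mu,t}(\omega^*)+\overline B_4\subseteq K_j+\overline B_7$, hence $K_i'\subseteq K_j''$. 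Since $F_i(s)\in\G(K_i')$ by construction, this already shows $F_i(s)\in\G(K_j'')$, settling the leading factor in $E_i(s)=F_i(s)\cap\bigcap_{k<i}F_k(s)^c$.

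Next, I would treat each factor $F_k(s)^c$ for $k<i$ case-by-case. If $F_i(s)\cap F_k(s)=\emptyset$, then $F_k(s)^c\supseteq F_i(s)$ and the factor is redundant after intersecting with $F_i(s)$. Otherwise, choosing $\omega'\in F_i(s)\cap F_k(s)$ and again invoking Lemma~\ref{lem:pptFiEi} at $\omega'$ yields $K_k\subseteq K_i+\overline B_7$. For the ``in particular'' assertion ($s=t$, $j=i$) this immediately gives $K_k'\subseteq K_i+\overline B_{9+a_\mu}=K_i''$, hence $F_k(t)\in\G(K_k')\subseteq\G(K_i'')$. So $E_i(t)=F_i(t)\cap\bigcap_{k<i}F_k(t)^c$ is realized as a countable intersection of $\G(K_i'')$-measurable events, and the particular claim falls out cleanly.

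The main obstacle is the general case $s<t$, $j\neq i$, where chaining $K_k\subseteq K_i+\overline B_7$ with $K_i\subseteq K_j+\overline B_7$ only gives $K_k'\subseteq K_j+\overline B_{16+a_\mu}$, which lies strictly outside $K_j''$. To close this gap I would further split on whether the triple $F_k(s)\cap F_i(s)\cap F_j(t)$ is empty. When it is nonempty, a witness yields the tighter bound $K_k\subseteq \Re_{\mu,s}(\omega')+\overline B_4\subseteq \Re_{\mu,t}(\omega')+\overline B_4\subseteq K_j+\overline B_7$, and hence $K_k'\subseteq K_j''$ directly. When it is empty, I would deploy the localization of Corollary~\ref{c.pillage}: on $F_i(s)$ the sublevel sets of $m_\mu^{K_k'}$, $m_\mu(\cdot,0)$, and $m_\mu^{K_j''}$ all lie within Hausdorff distance $O(1)$ of each other, so the defining condition ``$\{m_\mu^{K_k'}\leq s\}\in\widetilde\Gamma_k$'' can be restated equivalently in terms of $\{m_\mu^{K_j''}\leq s\}$, which is $\G(K_j'')$-measurable. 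Calibrating this Hausdorff error against the $\overline B_4$ tolerance built into $\widetilde\Gamma_k$, so that the replacement does not spuriously alter which indices $k$ trigger the event, is the technical core of the argument.
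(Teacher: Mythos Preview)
Your opening matches the paper exactly: the containment $K_i'\subseteq K_j''$ from a witness in $F_i(s)\cap F_j(t)$ is the paper's claim \eqref{e.fishfishfish}, and your argument for the ``in particular'' assertion is essentially the paper's. For the general conclusion $E_i(s)\in\G(K_j'')$, the paper's route is much shorter than yours: it applies \eqref{e.fishfishfish} once more (with $s=t$) to see that any $n<i$ with $F_n(s)\cap F_i(s)\neq\emptyset$ satisfies $K_n'\subseteq K_i''$, and hence obtains the rewriting
\[
E_i(s)\;=\;F_i(s)\setminus\bigcup_{n\in\beta(i)}F_n(s),\qquad \beta(i):=\{1\le n<i:K_n'\subseteq K_i''\},
\]
over a \emph{fixed, $\omega$-independent} index set. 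The measurability question then reduces to elementary set containments among the compacta $K_n',K_i',K_j''$; there is no case-split on triples and no appeal to Corollary~\ref{c.pillage}.

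Your Corollary~\ref{c.pillage} plan for the ``empty triple'' sub-case has a genuine gap. You want membership in $F_k(s)$ --- that is, whether $\{y:m_\mu^{K_k'}(y)\le s\}\in\widetilde\Gamma_k$ --- to be decidable from the environment inside $K_j''$, by swapping $m_\mu^{K_k'}$ for $m_\mu^{K_j''}$. But first, the claim that on $F_i(s)$ the sublevel sets of $m_\mu^{K_k'}$ and $m_\mu^{K_j''}$ are within Hausdorff $O(1)$ of each other is unjustified: Corollary~\ref{c.pillage} only compares a localized sublevel set to $\Re_{\mu,s}$ when the former sits well inside its defining domain, and for $\omega\in F_i(s)\setminus F_k(s)$ there is no reason $\{m_\mu^{K_k'}\le s\}$ should lie inside $K_k$ at all. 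Second, even granting $O(1)$ closeness, membership in a fixed $\widetilde\Gamma_k$ is not stable under such perturbations; the $\overline B_4$ slack in $\widetilde\Gamma_k$ was already spent absorbing the gap between $\{m_\mu^{K_k'}\le s\}$ and $\Re_{\mu,s}$, and there is no further margin for a second domain-swap error. Third, whether $F_k(s)\cap F_i(s)\cap F_j(t)=\emptyset$ is a global statement about $\Omega$ and says nothing about which individual $\omega\in F_i(s)$ lie in $F_k(s)$, so the sub-case split does not advance the measurability question. Abandon this detour and follow the paper's deterministic-index-set rewriting.
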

\begin{proof}
We first claim that
\begin{equation} \label{e.fishfishfish}
0 < s \leq t \quad \mbox{and} \quad F_i(s) \cap F_j(t) \neq \emptyset \qquad \mbox{implies that} \qquad K_i' \subseteq K_j''.
\end{equation}
Here is the proof of~\eqref{e.fishfishfish}: if $0 < s \leq t$ and $\omega\in F_i(s)\cap F_j(t)$, then by~\eqref{e.keylocal2} we have
\begin{equation*} \label{}
K_i  \subseteq \Re_{\mu,s}(\omega) +\overline  B_{4} \subseteq \Re_{\mu,t}(\omega) + \overline B_{4} \subseteq K_j+\overline B_{7}.
\end{equation*}
Hence $K_i' \subseteq K_j' + \overline B_{7} = K_j''$.

It is clear that $F_i(s)\in \G(K_i')$ for every $i\in \N$. If $0< s\leq t$ and $i,j\in \N$ are such that $F_i(s) \cap F_j(t) \neq \emptyset$, then we have $F_i(s)\in \G(K_j'')$ by~\eqref{e.fishfishfish}. Observe also that~\eqref{e.fishfishfish} yields the expression
\begin{equation*}\label{}
E_i(s) = F_i(s) \setminus \bigcup_{n \in \beta(i)} F_n(s),
\end{equation*}
where we have set $\beta(i):= \left\{ 1 \leq n < i \,:\, K_n' \subseteq K_i'' \right\}$.
The lemma follows.
\end{proof}

We now introduce the ``moving front" $\Se_{\mu,t}$ and the filtration $\F_{\mu,t}$ that we use in the proof of Proposition~\ref{p.fluc}. We define $\Se_{\mu,t}$ by
\begin{equation}\label{defSemu}
\Se_{\mu,t}(\omega) := K_i \qquad {\rm if }\; \omega\in E_i(t). 
\end{equation}
Observe that~\eqref{e.keylocal2} implies that $\Se_{\mu,t}$ is a good approximation of $\Re_{\mu,t}$:
\begin{equation}\label{e.ReSeRe}
\Re_{\mu,t} \subseteq  \Se_{\mu,t} +\overline B_3,\qquad \Se_{\mu,t} \subseteq \Re_{\mu,t}+ \overline{ B}_4
\qquad {\rm and }\qquad \dist_H\left(\partial \Re_{\mu,t},\partial  \Se_{\mu,t}\right)\leq 7.
\end{equation}
The filtration $\F_{\mu,t}$ is defined as the $\sigma$-algebra generated by events of the form
\begin{equation}\label{e.defFmut}
G \cap E_i(s), \qquad \mbox{where} \quad 0< s \leq t, \ i\in \N, \ G \in \mathcal G(K_i'').
\end{equation}
We also set $\F_{\mu,0} = \{ \emptyset, \Omega \}$.

Observe that $\{ \F_{\mu,t}\}_{t\geq 0}$ is indeed a filtration, since by definition it is increasing in $t$. It is clear from Lemma~\ref{l.loopers} that, for every $y\in \Rd$ and $t \geq 0$,
\begin{equation}\label{e.Semeasu}
\left\{ \omega\in \Omega \,:\, y\in \Se_{\mu,t} (\omega)\right\} \in \F_{\mu,t}.
\end{equation}
For convenience, below we write $\{ y \in \Se_{\mu,t} \}$ to denote the event $\{ \omega\in \Omega\,:\,  y \in \Se_{\mu,t}(\omega)\}$.

\smallskip

The next lemma is the rigorous justification of~\eqref{e.dpp.h}, which follows relatively easily from~\eqref{e.ReSeRe} and the Lipschitz estimates.
 
\begin{lem}\label{l.dpp}
For any $t>1$, 
\begin{equation}\label{e.dpp}
\left| m_\mu(y,0 )-\left(t+m_\mu(y,\Se_{\mu,t} )\right)\right|\indc_{\{ y\not\in \Se_{\mu,t}\}}(\omega) \leq 8L_\mu\indc_{\{y\not\in \Se_{\mu,t}\}}(\omega). 
\end{equation}
\end{lem}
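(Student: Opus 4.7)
Plan: I would prove the two-sided estimate by establishing matching upper and lower bounds on $m_\mu(y,0) - t - m_\mu(y,\Se_{\mu,t})$ when $y\not\in\Se_{\mu,t}$, each arising from the maximal-subsolution characterization of the appropriate function together with the Hausdorff closeness of $\Se_{\mu,t}$ and $\Re_{\mu,t}$ recorded in \eqref{e.ReSeRe} and the global Lipschitz bound \eqref{lips} with constant $L_\mu$.

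For the \emph{upper bound}, the inclusion $\Se_{\mu,t}\subseteq \Re_{\mu,t}+\overline B_4$ in \eqref{e.ReSeRe} shows that every point of $\Se_{\mu,t}+\overline B_1$ lies within distance $5$ of $\Re_{\mu,t}$, on which $m_\mu(\cdot,0)\leq t$. The Lipschitz bound then gives $m_\mu(\cdot,0)\leq t+5L_\mu$ on $\Se_{\mu,t}+\overline B_1$, so the shift $m_\mu(\cdot,0)-t-5L_\mu$ is a subsolution of \eqref{e.metbas} on $\Rd$ that is nonpositive on $\Se_{\mu,t}+\overline B_1$. The maximality property \eqref{e.mmuK} of $m_\mu(\cdot,\Se_{\mu,t})$ then yields $m_\mu(y,0)-t-m_\mu(y,\Se_{\mu,t})\leq 5L_\mu$ for every $y\in\Rd$.

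For the \emph{lower bound}, I would first combine the last inclusion in \eqref{e.ReSeRe} with the fact that $m_\mu(\cdot,0)=t$ on $\partial\Re_{\mu,t}$ and the Lipschitz estimate to obtain $m_\mu(\cdot,0)\geq t-7L_\mu$ on $\partial\Se_{\mu,t}$. Then, imitating the pasting construction from the proof of \eqref{lkjhbqscvezrj} in Proposition~\ref{existMP}(v), for each $\ep>0$ I would set
\begin{equation*}
w_\ep(x) := \begin{cases} m_\mu(x,0), & x\in \Se_{\mu,t},\\ \max\bigl\{m_\mu(x,0),\, m_\mu(x,\Se_{\mu,t})+t-7L_\mu-\ep\bigr\}, & x\not\in \Se_{\mu,t}. \end{cases}
\end{equation*}
On $\partial\Se_{\mu,t}$ one has $m_\mu(\cdot,\Se_{\mu,t})=0$ and $m_\mu(\cdot,0)\geq t-7L_\mu>t-7L_\mu-\ep$, so the two branches agree there and $w_\ep$ is continuous; being locally a maximum of subsolutions of \eqref{e.metbas}, it is itself a viscosity subsolution on $\Rd$. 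Since every $K_i$ contains $\overline B_1$ by the definition of $\mathcal K$, we have $\overline B_1\subseteq\Se_{\mu,t}$, hence $w_\ep=m_\mu(\cdot,0)\leq 0$ on $\overline B_1$. The maximality property \eqref{e.mmu} of $m_\mu(\cdot,0)$ then gives $w_\ep\leq m_\mu(\cdot,0)$ everywhere, and reading off the outer branch at $y\not\in\Se_{\mu,t}$ and sending $\ep\to 0$ yields $m_\mu(y,0)\geq t+m_\mu(y,\Se_{\mu,t})-7L_\mu$.

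Combining the two inequalities gives $|m_\mu(y,0)-t-m_\mu(y,\Se_{\mu,t})|\leq 7L_\mu\leq 8L_\mu$ on $\{y\not\in\Se_{\mu,t}\}$, as required. I expect the main (mild) obstacle to be verifying that $w_\ep$ is a viscosity subsolution across the interface $\partial\Se_{\mu,t}$; this is the standard pasting lemma for viscosity subsolutions, and the strict margin $\ep>0$ ensures that the outer branch is strictly dominated by the inner one on a neighbourhood of the interface so that no genuine gluing is needed. The assumption $t>1$ is not essential for this argument and only serves to ensure that $\Se_{\mu,t}$ is a nontrivial approximation of $\Re_{\mu,t}$ outside of $\overline B_1$.
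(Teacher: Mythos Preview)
Your proof is correct and follows essentially the same approach as the paper: both bounds rely on the maximal-subsolution characterizations \eqref{e.mmu} and \eqref{e.mmuK} together with the Hausdorff estimates \eqref{e.ReSeRe}, and your pasting construction for the lower bound is the same as the paper's argument for its inequality \eqref{e.climb1}. The only cosmetic differences are that the paper first expresses both bounds in terms of $\inf_{\partial\Se_{\mu,t}} m_\mu(\cdot,0)$ and $\sup_{\partial\Se_{\mu,t}} m_\mu(\cdot,0)$ and only afterwards invokes \eqref{e.ReSeRe}, picking up an extra $L_\mu$ on the upper side (whence the $8L_\mu$), whereas you substitute the Hausdorff bounds earlier and obtain the slightly sharper $7L_\mu$.
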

\begin{proof} 
By the maximality of $m_\mu(\cdot,0 )$, we have
\begin{equation}\label{e.climb1}
\inf_{z\in \partial \Se_{\mu,t}} m_\mu(z,0 ) + m_\mu(\cdot,\Se_{\mu,t} ) \leq m_\mu(\cdot,0 ) \quad \mbox{in} \ \Rd\setminus \Se_{\mu,t}.
\end{equation}
Indeed, for every $\ep > 0$, the function
\begin{equation*}\label{}
w(y):= \begin{cases} 
m_\mu(y,0 ) & y \in \Se_{\mu,t},\\
\displaystyle \max\left\{ m_\mu(y,0 ), \inf_{z\in \partial \Se_{\mu,t}} m_\mu(z,0 ) + m_\mu(y,\Se_{\mu,t} ) - \ep \right\} & y\not \in \Se_{\mu,t},
\end{cases}
\end{equation*}
is a global subsolution which vanishes on $\overline B_1$, hence $w \leq m_\mu(\cdot,0 )$ by maximality. Sending $\ep \to 0$ yields~\eqref{e.climb1}. On the other hand, the maximality of $m_\mu(\cdot,\Se_{\mu,t} )$ yields
\begin{equation*}\label{}
m_\mu(\cdot,\Se_{\mu,t} ) \geq m_\mu(\cdot,0 ) - \sup_{z\in \partial \Se_{\mu,t}}  m_\mu(z,0 ) -L_\mu \quad \mbox{in} \ \Rd
\end{equation*}
since the right side is a subsolution which is nonpositive on $\Se_{\mu,t}+B_1$ thanks to~\eqref{control2}. We conclude that
\begin{equation}\label{e.climb2}
\inf_{z\in \partial \Se_{\mu,t}} m_\mu(z,0 )\leq  m_\mu(\cdot,0 ) - m_\mu(\cdot,\Se_{\mu,t} ) \leq  \sup_{z\in \partial \Se_{\mu,t}}  m_\mu(z,0 ) +L_\mu \quad \mbox{in} \ \Rd\setminus \Se_{\mu,t}
\end{equation}
Next we observe that, according to~\eqref{e.ReSeRe}, the Lipschitz estimate and the fact that $m_\mu(\cdot,0 ) \equiv t$ on $\partial \Re_{\mu,t}$, we have  
\begin{equation}\label{e.climb3}
\sup_{z\in \partial\Se_{\mu,t}} \left| m_\mu(z,0 ) -  t \right| \leq 7L_\mu.
\end{equation}
Combining~\eqref{e.climb2} and~\eqref{e.climb3} gives
\begin{equation*}\label{}
\left| m_\mu(\cdot,0 )-\left(t+m_\mu(\cdot,\Se_{\mu,t} )\right)\right| \leq 8L_\mu \quad \mbox{in} \ \Rd\setminus \Se_{\mu,t},
\end{equation*}
which yields the lemma.
\end{proof}

We next give the rigorous justification of~\eqref{e.crutid1}, which asserts that $m_\mu(y,0 ) \indc_{\{ y\in \Se_{\mu,t}\}}$ is nearly $\F_{\mu,t}$--measurable. 
\begin{lem} \label{l.frumpies}
For every $y\in \Rd$ and $0 < s \leq t$, 
\begin{equation}\label{e.frumpies}
\left| m_\mu(y,0 )\indc_{\{ y\in \Se_{\mu,s}\}} -
\E\big[ m_\mu(y,0 )\, |\, \F_{\mu,t}\big]\indc_{\{ y\in \Se_{\mu,s}\}} \right|\leq 18L_\mu.
\end{equation}
\end{lem}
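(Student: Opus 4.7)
The strategy is to approximate $m_\mu(y,0 )$ on the event $\{y\in \Se_{\mu,s}\}$ by a well-chosen $\F_{\mu,t}$--measurable random variable. On the event $E_i(s)$, the front $\Se_{\mu,s}$ equals $K_i$ by \eqref{defSemu}, and the estimate \eqref{e.keylocal1} tells us that the localized maximal subsolution $m_\mu^{K_i'}(y )$ differs from $m_\mu(y,0 )$ by at most $9L_\mu$ whenever $y\in K_i$. I would therefore set
$$Y := \sum_{i\in \N} m_\mu^{K_i'}(y )\, \indc_{E_i(s)\cap \{y\in K_i\}},$$
a sum in which at most one term is nonzero. Since $\{y\in \Se_{\mu,s}\} = \bigcup_{i} \bigl(E_i(s)\cap \{y\in K_i\}\bigr)$, combining the pointwise estimates from \eqref{e.keylocal1} gives the deterministic bound
$$\bigl| m_\mu(y,0 )\, \indc_{\{y\in \Se_{\mu,s}\}} - Y \bigr| \leq 9 L_\mu\, \indc_{\{y\in \Se_{\mu,s}\}}.$$

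Next I would verify that $Y$ is $\F_{\mu,t}$--measurable. Each $m_\mu^{K_i'}(y )$ is $\G(K_i')$--measurable by \eqref{e.mmuUdef}, and because $K_i'\subseteq K_i''$ it is therefore $\G(K_i'')$--measurable. By Lemma \ref{l.loopers}, $E_i(s)\in \G(K_i'')$, and since $s\leq t$ every event of the form $\{m_\mu^{K_i'}(y )\in B\}\cap E_i(s)$ with $B\subseteq \R$ Borel is one of the generators in \eqref{e.defFmut} of $\F_{\mu,t}$. It follows that each summand of $Y$, and hence $Y$ itself, is $\F_{\mu,t}$--measurable. The indicator $\indc_{\{y\in \Se_{\mu,s}\}}$ is also $\F_{\mu,t}$--measurable by \eqref{e.Semeasu} and the inclusion $\F_{\mu,s}\subseteq \F_{\mu,t}$.

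To finish, I would take the conditional expectation of the pointwise bound with respect to $\F_{\mu,t}$: using $\F_{\mu,t}$--measurability of $Y$ and $\indc_{\{y\in \Se_{\mu,s}\}}$ together with conditional Jensen, I get
$$\bigl| \indc_{\{y\in \Se_{\mu,s}\}} \E\bigl[ m_\mu(y,0 ) \,\big|\, \F_{\mu,t}\bigr] - Y \bigr| \leq 9 L_\mu \, \indc_{\{y\in \Se_{\mu,s}\}},$$
and then the triangle inequality with the first displayed estimate yields the claimed $18 L_\mu$ bound in \eqref{e.frumpies}. The only real subtlety is the measurability bookkeeping: $\F_{\mu,t}$ is generated not by the ambient $\sigma$--algebras $\G(K_i'')$ but only by their intersections with the events $E_i(s')$, so one must use the preceding subsection's careful choice of $K_i''$ slightly larger than $K_i'$ to ensure that the approximating random variable $Y$ lies in $\F_{\mu,t}$. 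This is the main (rather mild) obstacle; everything else reduces to the deterministic estimate \eqref{e.keylocal1} combined with linearity of conditional expectation.
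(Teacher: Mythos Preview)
Your proof is correct and follows essentially the same approach as the paper: your approximating random variable $Y$ coincides with the paper's $Z:=\sum_{i\in\N,\,y\in K_i} m_\mu^{K_i'}(y)\,\indc_{E_i(s)}$, and both arguments invoke \eqref{e.keylocal1} for the pointwise $9L_\mu$ bound, Lemma~\ref{l.loopers} together with the definition \eqref{e.defFmut} for $\F_{\mu,t}$--measurability, and then the triangle inequality through the conditional expectation.
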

\begin{proof}
Fix $y\in \Rd$, $0 < s \leq t$ and define a random variable $Z$ by
\begin{equation*}\label{}
Z(\omega):= \sum_{i\in \N,\, y\in K_i} m_\mu^{K_i'}(y ) \indc_{ E_i(s) } (\omega). 
\end{equation*}
It is clear that $m_\mu^{K_i'}(y )$ is $\mathcal{G}(K_i'')$--measurable. According to Lemma~\ref{l.loopers}, $E_i(s) \in \F_{\mu,s}$, and hence $Z$ is $\F_{\mu,s}$--measurable by the definition~\eqref{e.defFmut} of the filtration. As
\begin{equation*}\label{}
\left\{\omega\in \Omega\,:\, y\in \Se_{\mu,s} \right\} = \bigcup_{i\in \N,\, y\in K_i} E_i(s),
\end{equation*}
we have, from~\eqref{e.keylocal1}, that for every $\omega\in \Omega$,
\begin{align*}
\left| Z(\omega)  -  m_\mu(y,0 ) \indc_{\{ y\in \Se_{\mu,s}\}}(\omega) \right| \leq \sum_{i\in \N,\ y\in K_i} \left| m_\mu^{K_i'}(y ) - m_\mu(y,0 ) \right|\indc_{ E_i(s) } (\omega) \leq 9L_\mu.
\end{align*}
Using~\eqref{e.Semeasu} and that $Z$ is $\F_{\mu,t}$--measurable, we find that
\begin{align*}\label{}
\lefteqn{ \left| m_\mu(y,0 )\indc_{\{ y\in \Se_{\mu,s}\}} -
\E\big[ m_\mu(y,0 )\, |\, \F_{\mu,t}\big]\indc_{\{ y\in \Se_{\mu,s}\}} \right| } \qquad \qquad & \\
& \leq \left|  m_\mu(y,0 )\indc_{\{ y\in \Se_{\mu,s}\}}
-Z\right|
+ \left| \E\big[ Z\ |\ \F_{\mu,t}\big] -
\E\big[ m_\mu(y,0 )\ |\ \F_{\mu,t}\big]\indc_{\{ y\in \Se_{\mu,s}\}} \right|  \\
& = \left|  m_\mu(y,0 )\indc_{\{ y\in \Se_{\mu,s}\}}
-Z\right|
+  \left|\E\left[ Z -m_\mu(y,0 )\indc_{\{ y\in \Se_{\mu,s}\}}  \, \big\vert \, \F_{\mu,t}\right]\right|\\
& \leq  2 \sup_{\Omega} \left|  m_\mu(y,0 )\indc_{\{ y\in \Se_{\mu,s}\}}
-Z\right| \\
& \leq 18L_\mu,
\end{align*}
as desired.
\end{proof}

As a consequence of Lemma~\ref{l.frumpies}, we obtain~\eqref{e.keyinc2}, albeit with a small error: the statement is that for every~$T\geq L_\mu(|y|-1)$,
\begin{equation}\label{e.frumpies1}
\left|m_\mu(y,0 )-\E\big[m_\mu(y,0 )\, |\, \F_{\mu,T}\big]\right|\leq 18L_\mu.
\end{equation}
Here is the proof: by the definition of $F_i(t)$, we have
\begin{equation*}\label{}
\omega\in F_i(t) \quad \mbox{implies that} \quad \left\{ y\in K_i' \,:\, m^{K_i'}_\mu(y ) \leq t \right\} \subseteq K_i. 
\end{equation*}
In light of~\eqref{e.grthUwup} and the definition of $\Se_{\mu,t}$, we find that 
\begin{equation*}\label{}
T \geq L_\mu(R-1) \quad \mbox{implies that} \quad \overline B_R \subseteq \Se_{\mu,T}.
\end{equation*}
Hence $T\geq L_\mu(|y|-1)$ implies that $\indc_{\{ y\in \Se_{\mu,T}\}} \equiv 1$, and so~\eqref{e.frumpies1} follows from~\eqref{e.frumpies}. 

\smallskip

The next lemma provides the rigorous justification of~\eqref{e.crutid2}.

\begin{lem} \label{l.gumpers}
For every $t>0$,
\begin{equation}\label{e.gumpers}
0 \leq \E \big[ m_\mu(y,\Se_{\mu,t} ) \,\big\vert\, \F_{\mu,t} \big] - \sum_{i\in \N} \E \left[ m_\mu(y,\tilde K_i )\right] \indc_{E_i(t)} \leq L_\mu(a_\mu+13).
\end{equation}
\end{lem}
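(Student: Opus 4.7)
The plan is to exploit the partition structure: since $\Se_{\mu,t}=K_i$ on $E_i(t)$ and these events partition $\Omega$,
\[
m_\mu(y,\Se_{\mu,t})=\sum_{i\in\N} m_\mu(y,K_i)\indc_{E_i(t)}.
\]
Because each $E_i(t)\in\F_{\mu,t}$ by Lemma~\ref{l.loopers}, the indicators pull through the conditional expectation, and the task reduces to comparing $\E[m_\mu(y,K_i)\,|\,\F_{\mu,t}]\indc_{E_i(t)}$ with $\E[m_\mu(y,\widetilde K_i)]\indc_{E_i(t)}$ term by term. This splits cleanly into two independent moves: a deterministic swap of $K_i$ for $\widetilde K_i$ at a controlled cost, and a probabilistic decoupling of $m_\mu(y,\widetilde K_i)$ from $\F_{\mu,t}$ on $E_i(t)$ using the finite range of dependence.

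For the deterministic swap, the inclusion $K_i\subseteq\widetilde K_i$ gives $m_\mu(y,K_i)\geq m_\mu(y,\widetilde K_i)$ directly from~\eqref{e.mmuK}. For the reverse inequality, set $C_i:=\sup_{z\in\widetilde K_i+\overline B_1}m_\mu(z,K_i)$; the function $m_\mu(\cdot,K_i)-C_i$ is a subsolution of~\eqref{e.metbas} which is nonpositive on $\widetilde K_i+\overline B_1$, so by the maximality in~\eqref{e.mmuK} it is dominated by $m_\mu(\cdot,\widetilde K_i)$. Using $\widetilde K_i\subseteq K_i+\overline B_{13+a_\mu}$ together with the Lipschitz estimate of Proposition~\ref{existMP}(ii) and the vanishing of $m_\mu(\cdot,K_i)$ on $K_i+\overline B_1$, one gets $C_i\leq L_\mu(12+a_\mu)$, which yields the pointwise estimate
\[
0\leq m_\mu(y,K_i)-m_\mu(y,\widetilde K_i)\leq L_\mu(a_\mu+13).
\]
Taking conditional expectations transports this bound to the first step of the comparison.

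For the decoupling, I would prove that on each $E_i(t)$, $\E[m_\mu(y,\widetilde K_i)\,|\,\F_{\mu,t}]=\E[m_\mu(y,\widetilde K_i)]$. The inputs are two: first, since $w\leq 0$ is imposed on $\widetilde K_i+\overline B_1$ in the definition of $m_\mu(\cdot,\widetilde K_i)$, that random variable is $\G(\Rd\setminus\widetilde K_i)$-measurable; second, the trace $\F_{\mu,t}\cap E_i(t)$ of the filtration on $E_i(t)$ is contained in $\G(K_i''+\overline B_3)$, which is where Lemma~\ref{l.loopers} enters: each generator $G\cap E_k(s)$ of $\F_{\mu,t}$ becomes, after intersection with $E_i(t)$, an event localized inside a suitable enlargement of $K_i$ thanks to the inclusion $K_k'\subseteq K_i''$. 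Since the construction of $\widetilde K_i$ was arranged so that $\dist(K_i''+\overline B_3,\Rd\setminus\widetilde K_i)\geq 1$, the finite-range-of-dependence hypothesis~\eqref{e.frd} supplies independence of the two $\sigma$-algebras, and a $\pi$--$\lambda$ argument upgrades independence on generators to the desired conditional-expectation identity.

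Combining the two moves and summing over $i\in\N$ using $\sum_i\indc_{E_i(t)}\equiv 1$ yields both inequalities in~\eqref{e.gumpers}: the lower bound from the pointwise inequality $m_\mu(y,K_i)\geq m_\mu(y,\widetilde K_i)$ and the decoupling, the upper bound from the same decoupling together with the $L_\mu(a_\mu+13)$ deterministic loss. The main technical obstacle is the measurability claim in the decoupling step: one must track the geometry of the buffered enlargements $K_i\subseteq K_i'\subseteq K_i''\subseteq\widetilde K_i$ to verify that every generator of $\F_{\mu,t}$ is, after restriction to $E_i(t)$, determined by coefficients a full unit away from the spatial support of $m_\mu(\cdot,\widetilde K_i)$.
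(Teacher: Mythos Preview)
Your proposal is correct and follows essentially the same route as the paper: the paper's Step~1 is precisely your claim that $A\cap E_i(t)\in\G(K_i''+\overline B_3)$ for every $A\in\F_{\mu,t}$ (argued on generators via~\eqref{e.fishfishfish} and Lemma~\ref{l.loopers}), Step~2 is your decoupling identity $\E[m_\mu(y,\widetilde K_i)\indc_{E_i(t)}\,|\,\F_{\mu,t}]=\E[m_\mu(y,\widetilde K_i)]\indc_{E_i(t)}$ via~\eqref{e.KtKindp}, and Step~3 is your deterministic swap, which the paper obtains directly from the Lipschitz bound~\eqref{lips} with $\dist_H(K_i,\widetilde K_i)=13+a_\mu$ rather than by building a subsolution. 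One minor slip: your intermediate claim $C_i\le L_\mu(12+a_\mu)$ should read $L_\mu(13+a_\mu)$, consistent with the final bound you state.
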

\begin{proof}
\emph{Step 1.} We first show that, for every $i\in \N$ and $t>0$,
\begin{equation}\label{e.lumpies}
A \in \F_{\mu,t} \qquad \mbox{implies that} \qquad A \cap E_i(t) \in \G( K_i'' + \overline B_3).
\end{equation}
By the definition of $\F_{\mu,t}$, it suffices to fix $0<s\leq t$, $j\in \N$, $G\in \mathcal G(K_j'')$ and to take $A = G \cap E_j(s)$. Then 
\begin{equation*}
A \cap E_i(t) = G \cap E_j(s) \cap E_{i}(t).
\end{equation*}
By~\eqref{e.loopers} and~\eqref{e.fishfishfish}, we see that either this set is empty or else $K_j' \subseteq K_i''$and $E_j(s) \in \mathcal G(K_i'')$. This also yields that $G\in \mathcal G(K_j'') \subseteq \mathcal G(K_i''+\overline B_3)$ and $E_j(s) \cap E_i(t) \in \mathcal G(K_i'')$, and therefore we obtain $A\cap E_i(t) \in  \mathcal G(K_i''+ \overline B_3)$ as desired.

\smallskip

\emph{Step 2.} We claim that
\begin{equation}\label{e.groscaca}
 \E \left[ m_\mu\big(y,\widetilde K_i \big) \indc_{E_i(t)}  \, \Big\vert\, \mathcal{F}_{\mu,t}\right] = \E\left[ m_\mu\big(y,\widetilde K_i \big) \right] \indc_{E_i(t)}.
\end{equation}
By the definition of conditional expectation, we must show that, for every $A\in \F_{\mu,t}$,
\begin{equation*}\label{}
\E \left[ m_\mu(y,\tilde K_i ) \indc_{A\cap E_i(t)} \right] = \E \left[ m_\mu(y,\tilde K_i ) \right] \Prob\Big[\, A \cap E_i(t) \Big].
\end{equation*}
Since $m_\mu(y,\tilde K_i )$ is $\G(\Rd\!\setminus \!\tilde K_i)$-measurable, this follows from~\eqref{e.KtKindp} and~\eqref{e.lumpies}.

\smallskip

\emph{Step 3.} The conclusion, using~\eqref{e.groscaca} and the Lipschitz estimates. Observe that
\begin{equation*}\label{}
\E \big[ m_\mu(y,\Se_{\mu,t} )  \,\big\vert\, \F_{\mu,t} \big]  = \sum_{i\in \N} \E \big[ m_\mu(y,\Se_{\mu,t} ) \indc_{E_i(t)}  \,\big\vert\, \F_{\mu,t} \big]  =  \sum_{i\in \N} \E \big[ m_\mu(y,K_i ) \indc_{E_i(t)}  \,\big\vert\, \F_{\mu,t} \big]
\end{equation*}
and, according to the Lipschitz estimates \eqref{lips},
\begin{equation*}\label{}
0 \leq \left(  m_\mu(y,K_i ) - m_\mu(y,\tilde K_i )\right) \indc_{E_i(t)}  \leq  L_\mu (a_\mu+13) \indc_{E_i(t)}.
\end{equation*}
Combining the previous two lines and applying~\eqref{e.groscaca} yields~\eqref{e.gumpers}.
\end{proof}

\subsection{The fluctuations estimate}
\label{ss.fluc}

We now present the proof for Proposition~\ref{p.fluc}, which follows the heuristic argument given in Subsection~\ref{ss.heur}.

\begin{proof}[{Proof of Proposition~\ref{p.fluc}}]
We break the proof into three steps. Throughout, $C$ denotes a constant which may change from line to line and depends only on $(q,\Lambda,\mu_0)$.

\smallskip

\emph{Step 1}. The application of Azuma's inequality.
Fix $y\in \Rd$ and define a $\F_{\mu,t}$--adapted martingale $\{ X_t \}_{t\geq 0}$ by
\begin{equation}\label{}
X_t : = \E \big[ m_\mu(y,0 ) \, \vert \, \mathcal{F}_{\mu,t} \big] - M_\mu(y).
\end{equation}
Here $\{ \F_{\mu,t}\}$ is the filtration defined in the previous subsection and we recall that $M_\mu(y)$ is defined by~\eqref{e.meandef}. Observe that $X_0 \equiv 0$ and, according to~\eqref{e.frumpies1}, for all $t\geq L_\mu|y|$,
\begin{equation} \label{e.deritx}
\left| X_t(\omega) - \left(m_\mu(y,0 ) - M_\mu(y)\right) \right| \leq 18L_\mu.
\end{equation}
The main step in our argument is to show
\begin{equation}\label{e.stepbnd}
\esssup_{\omega\in\Omega} \left|X_t(\omega) - X_s(\omega) \right|  \leq (2a_\mu+98)L_\mu + \frac{2L_\mu}{l_\mu}|t-s|.
\end{equation}
We admit~\eqref{e.stepbnd} for a moment and use it to complete the proof of the proposition. Azuma's inequality applied to the discrete martingale sequence $Y_n:= X_{\alpha n}$, for $n\in \N$ and with $\alpha:=  l_\mu (a_\mu+49)$ yields, in light of~\eqref{e.stepbnd},
\begin{equation*}\label{}
\P\big[ \left|Y_n \right|>\lambda  \big] \leq \exp\left( - \frac{\lambda^2}{8(2a_\mu+98)^2L_\mu^2 n}\right).
\end{equation*}
Note that by~\eqref{e.amubnd} we have $l_\mu a_\mu \leq C\mu^{-2}$. Take $n:= \left\lceil \frac{L_\mu|y|}{\alpha} \right\rceil +1$ and observe that for $|y| \geq  C\mu^{-2} \geq Cl_\mu(a_\mu+1)$, we have
\begin{equation*}\label{}
n = \left\lceil \frac{ 2L_\mu|y|}{ l_\mu(2a_\mu+98)} +1\right\rceil  \leq \frac{C|y|}{l_\mu(a_\mu+1)}.
\end{equation*}
The above estimate, together with~\eqref{e.deritx}, yields~\eqref{e.fluc} for every $\lambda \geq C$.

\smallskip

We have left to prove~\eqref{e.stepbnd}, which is a consequence of the following two inequalities:
\begin{equation}\label{e.stepbone}
 |X_t-X_s| \leq \big| \E\left[  m_\mu(y,\Se_{\mu,t} ) \, \vert \, \mathcal{F}_{\mu,t} \right] - \E\left[ m_\mu(y, \Se_{\mu,s} ) \, \vert \, \mathcal{F}_{\mu,s} \right] \big| + \frac{L_\mu}{l_\mu}|s-t| + 62 L_\mu 
\end{equation}
and
\begin{equation}\label{e.stepbrain}
\big| \E\left[  m_\mu(y,\Se_{\mu,t} ) \, \vert \, \mathcal{F}_{\mu,t} \right] - \E\left[ m_\mu(y, \Se_{\mu,s} ) \, \vert \, \mathcal{F}_{\mu,s} \right] \big| \leq \frac{L_\mu}{l_\mu}|s-t| + (2a_\mu+36)L_\mu.
\end{equation}
These are proved in the next two steps.

\smallskip

\emph{Step 2.} The proof of~\eqref{e.stepbone}.
For every $0< s \leq t$, we have $\{ y\in \Se_{\mu,s}\} \in \mathcal{F}_{\mu,s}\subseteq \mathcal{F}_{\mu,t}$ and hence 
\begin{equation} \label{e.Xtexp}
X_t = \E\big[ m_\mu(y,0 ) \, \vert \, \mathcal F_{\mu,t} \big]\indc_{\{ y\in \Se_{\mu,s} \}}+
\E\big[ m_\mu(y,0 )\indc_{\{ y\notin \Se_{\mu,s} \}} \, \vert \, \mathcal F_{\mu,t} \big]- M_\mu(y)
\end{equation}
and
\begin{equation} \label{e.Xsexp}
X_s = \E\big[ m_\mu(y,0 ) \, \vert \, \mathcal F_{\mu,s} \big]\indc_{\{ y\in \Se_{\mu,s} \}}+
\E\big[ m_\mu(y,0 )\indc_{\{ y\notin \Se_{\mu,s} \}} \, \vert \, \mathcal F_{\mu,s} \big]- M_\mu(y).
\end{equation}
According to~\eqref{e.frumpies}, we have 
\begin{equation*}
\left| \E\big[ m_\mu(y,0 ) \, \vert \, \mathcal F_{\mu,t} \big]\indc_{\{ y\in \Se_{\mu,s} \}}
-\E\big[ m_\mu(y,0 ) \, \vert \, \mathcal F_{\mu,s} \big]\indc_{\{ y\in \Se_{\mu,s} \}}\right|\leq 36L_\mu.
\end{equation*}
Therefore, subtracting~\eqref{e.Xsexp} from~\eqref{e.Xtexp}, we obtain
\begin{equation*}
\left|X_t - X_s\right| \leq   \left|\E\big[ m_\mu(y,0 )\indc_{\{ y\notin \Se_{\mu,s} \}} \, \vert \, \mathcal F_{\mu,t} \big]
- \E\big[ m_\mu(y,0 )\indc_{\{ y\notin \Se_{\mu,s} \}} \, \vert \, \mathcal F_{\mu,s} \big]\right|+36L_\mu. 
\end{equation*}
Using~\eqref{e.dpp} to estimate the right side of the above inequality, we find
\begin{equation}\label{e.kittens}
\left|X_t - X_s\right| \leq  \left|\E\big[ m_\mu(y,\Se_{\mu,s} )  \, \vert \, \mathcal F_{\mu,t} \big]
- \E\big[ m_\mu(y,\Se_{\mu,s} )\, \vert \, \mathcal F_{\mu,s} \big]\right|\indc_{\{ y\notin \Se_{\mu,s}\}}+ 52L_\mu.
\end{equation}
According to~\eqref{e.ReSeRe} and~\eqref{e.movefron}, 
\begin{align}\label{e.twofronts}
\dist_H(\Se_{\mu,t}, \Se_{\mu,s}) & \leq \dist_H(\Se_{\mu,t},\Re_{\mu,t}) + \dist_H(\Re_{\mu,t},\Re_{\mu,s}) + \dist_H(\Re_{\mu,s},\Se_{\mu,s}) \\ & \leq  
\frac{|s-t|}{l_\mu} +10. \nonumber
\end{align}
By the Lipschitz estimate \eqref{lips}, this yields
\begin{equation*}
\left|m_\mu(y,\Se_{\mu,t} )- m_\mu(y,\Se_{\mu,s} )\right|\leq   \frac{L_\mu}{l_\mu}|s-t| +10L_\mu.
\end{equation*}
After inserting this into \eqref{e.kittens}, we get \eqref{e.stepbone}.

\smallskip

\emph{Step 3.} The proof of~\eqref{e.stepbrain}.
We use the discrete approximation constructed in the previous subsection: by Lemma~\ref{l.gumpers}, we have
\begin{align*}\label{}
\lefteqn{   \left| \E\big[ m_\mu(y, \Se_{\mu,t} ) \, \big\vert \, \mathcal{F}_{\mu,t} \big] - \E\big[ m_\mu(y, \Se_{\mu,s} ) \, \big\vert \, \mathcal{F}_{\mu,s} \big] \right|} \qquad \qquad &    \\ 
& \leq \bigg| \sum_{i\in \N} \E \left[ m_\mu(y,\tilde K_i )\right] \indc_{E_i(t)} - \sum_{j\in \N} \E \left[ m_\mu(y,\tilde K_j )\right] \indc_{E_j(s)} \bigg| \\
& \qquad + 2L_\mu(1+a_\mu+12) \\
& \leq  \sum_{i,j\in \N} \E \left[ \left| m_\mu(y,\tilde K_i ) - m_\mu(y,\tilde K_j )\right| \right]\indc_{E_i(t) \cap E_j(s)} + 2L_\mu(a_\mu+13).
\end{align*}
Next we recall from~\eqref{e.twofronts} that
\begin{equation*}\label{}
E_i(t) \cap E_j(s) \neq \emptyset \quad \mbox{implies that} \quad \dist_H(\widetilde K_i,\widetilde K_j) \leq \dist_H(K_i,K_j) \leq \frac{|s-t|}{l_\mu} + 10.
\end{equation*}
Using the Lipschitz estimate \eqref{lips}, this yields
\begin{multline*}\label{}
\sum_{i,j\in\N} \E \left| m_\mu(y,\tilde K_i ) - m_\mu(y,\tilde K_j )\right|\indc_{E_i(t) \cap E_j(s)} \\ \leq  \left( \frac{L_\mu}{l_\mu}|s-t| + 10L_\mu\right) \sum_{i,j\in\N} \indc_{E_i(t) \cap E_j(s)}= \frac{L_\mu}{l_\mu}|s-t| + 10L_\mu.
\end{multline*}
This completes the proof of~\eqref{e.stepbrain}.
\end{proof}

\section{Estimate of the statistical bias}\label{s.bias}

The main result of this section is an estimate of the nonrandom error, that is, for the difference between $\E \left[ m_\mu(y,0 ) \right]$ and $\overline m_\mu(y)$ for  $|y| \gg 1$.  

\begin{prop}\label{p.bias}
Fix $\mu_0\geq 1$. There exists $C>0$, depending only on $(d,q,\Lambda,\mu_0)$ such that, for every $y\in \R^d$ and $0<\mu\leq \mu_0$,
\begin{equation}\label{meanseq}
M_\mu(y) \leq \overline m_\mu(y) +  C\left( \frac{|y|^{\frac23}}{\mu^2} + \frac{|y|^\frac13}{\mu^{4}} \right)  \log\left(2+\frac{|y|}{\mu}\right).
\end{equation}
\end{prop}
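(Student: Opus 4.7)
The easy direction $\bar m_\mu(y) \leq M_\mu(y) + L_\mu$ is a direct consequence of Proposition~\ref{existMP}(iii) and stationarity. Taking expectations in the subadditivity relation and iterating along $y, 2y, \ldots, ky$ yields $M_\mu(ky) \leq k M_\mu(y) + (k-1) L_\mu$. Dividing by $k$ and appealing to the qualitative convergence $m_\mu(ky, 0)/k \to \bar m_\mu(y)$, with uniform integrability supplied by the global Lipschitz bounds of Section~\ref{Pre}, gives the claim. The substantive content of Proposition~\ref{p.bias} is thus the opposite inequality~\eqref{meanseq}.

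The plan for \eqref{meanseq} is an Alexander--Kesten chaining argument coupling subadditivity with Proposition~\ref{p.fluc}. Fix $y$ with $|y|$ large and an intermediate scale $r \in [1, |y|]$ (to be optimized at the end). Partition the ray $[0, y]$ into $n := \lfloor |y|/r \rfloor$ equal pieces by setting $z_j := (j/n) y$, so $|z_j - z_{j-1}| \sim r$. Iterated subadditivity gives
\begin{equation*}
m_\mu(y, 0) \leq \sum_{j=1}^n m_\mu(z_j, z_{j-1}) + (n-1) L_\mu,
\end{equation*}
and by stationarity each summand has the law of $m_\mu(y/n, 0)$, while positive homogeneity of $\bar m_\mu$ gives $n \bar m_\mu(y/n) = \bar m_\mu(y)$. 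Simply taking expectations produces only the self-referential inequality $b(|y|) \leq n\, b(r) + n L_\mu$ for the bias $b(R) := M_\mu(R \hat y) - \bar m_\mu(R \hat y)$, which on its own provides no improvement.

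The key ingredient that closes the argument is the approximate independence of the increments $m_\mu(z_j, z_{j-1})$. By the localization machinery of Section~\ref{s.fluctuation} (Lemma~\ref{l.seige} and Corollary~\ref{c.pillage}), each $m_\mu(z_j, z_{j-1})$ agrees up to $O(L_\mu)$ with a restricted-environment quantity $m^{U_j}_\mu$, where $U_j$ is an $O(a_\mu)$-neighborhood of the segment $[z_{j-1}, z_j]$. Inserting unit-width buffers between consecutive $U_j$'s makes the family $\{ m^{U_j}_\mu \}_j$ independent by the finite-range-of-dependence hypothesis~\eqref{e.frd}. Applying Azuma's inequality to the resulting martingale-difference decomposition, exactly as in the proof of Proposition~\ref{p.fluc}, and combining with the lower-tail bound of Proposition~\ref{p.fluc} on $m_\mu(y, 0)$, yields the improved recursion
\begin{equation*}
b(|y|) \leq n\, b(r) + C n (L_\mu + a_\mu) + \frac{C \sqrt{|y|}\, \lambda}{\mu^2}
\end{equation*}
with probability at least $1 - \exp(-c \lambda^2)$, where the $n a_\mu$ term is the cost of the buffer zones (recall $a_\mu \lesssim \mu^{-3}$).

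The proof is finished by iterating this recursion along a geometric sequence of scales, starting from the trivial initial bound $b(r_0) \leq C r_0$. Optimizing the final choice of $r$ to balance the accumulated subadditivity error $n L_\mu \sim |y|/r$, the buffer cost $n a_\mu \sim |y|/(r \mu^3)$, and the concentration contribution $\sqrt{|y|}/\mu^2$ produces the two terms in~\eqref{meanseq}: the main term $|y|^{2/3}/\mu^2$ arises from balancing the subadditivity error against the square-root concentration, while the second term $|y|^{1/3}/\mu^4$ reflects the $\mu^{-3}$ scale of the buffers. The logarithmic factor is the usual price of optimizing a Gaussian tail across a union bound in $\lambda$ and in the scale $r$. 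The main technical obstacle is precisely the approximate-independence step: in the first-order setting of \cite{ACS}, finite speed of propagation gives exact localization of $m_\mu(z_j, z_{j-1})$ in a bounded neighborhood of $[z_{j-1}, z_j]$, whereas here the diffusive term destroys such exact localization and one must instead lean on the quantitative localization estimates of Section~\ref{s.fluctuation}, paying an $a_\mu$-sized buffer at each level of the chain.
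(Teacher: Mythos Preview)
Your chaining argument has a fundamental gap: the recursion you derive cannot yield the bound you claim. The inequality $b(|y|) \leq n\,b(r) + Cn$ (with $|y|=nr$) already follows \emph{deterministically} from subadditivity and taking expectations; approximate independence of the increments and Azuma's inequality add nothing, because concentration of $\sum_j m_\mu(z_j,z_{j-1})$ around its mean $nM_\mu(y/n)$ only reproduces what you already had. More importantly, a recursion of the form $b(|y|) \leq n\,b(r) + O(n) + O(\sqrt{|y|})$ is useless under geometric iteration: the factor $n$ in front of $b(r)$ amplifies all lower-scale errors geometrically, and you end up with $b(|y|) \leq C|y|$, the trivial bound. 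Your proposed balance of terms to produce $|y|^{2/3}$ and $|y|^{1/3}$ does not emerge from this recursion.

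The conceptual issue is that point-to-point subadditivity goes the wrong way. Proving $M_\mu(y) \leq \overline m_\mu(y) + \mathrm{error}$ is equivalent (since $\overline m_\mu(y) = \inf_k M_\mu(ky)/k$) to a \emph{lower} bound on $M_\mu(ky)$ in terms of $kM_\mu(y)$, i.e.\ approximate \emph{super}additivity. The paper obtains this by passing to the point-to-hyperplane quantity $m_\mu(H_t,0)$, which is genuinely (approximately) superadditive: to reach $H_{s+t}$ one must first cross $H_s$ and then traverse a fresh slab of width $t$. Concretely, the paper introduces the soft-min approximation $g_{\mu,\sigma}(t) = -\sigma^{-1}\log \sum_{y\in\widehat H_t} \E[e^{-\sigma m_\mu(y,0)}]$, proves it is approximately superadditive (Lemma~\ref{superadd}) via the independence of slabs and the localization lemmas you cite, applies Hammersley--Fekete to get a rate for $\E[m_\mu(H_t,0)] - \overline m_\mu(H_t)$ (Lemma~\ref{EHtrate}), and finally transfers from hyperplanes to points using a convex-geometric argument (separation plus the inclusion~\eqref{convhull}). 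The exponents $2/3$ and $1/3$ and the powers of $\mu$ arise from optimizing the parameter $N$ in that last step, not from any chaining along a ray.
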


The section is devoted to the proof of this result. Before we begin, we first show that Propositions~\ref{p.fluc} and~\ref{p.bias} imply Theorem \ref{mpEE}.

\begin{proof}[Proof of Theorem \ref{mpEE}.] The estimate~\eqref{hard} is a straightforward consequence of Proposition~\ref{p.fluc} and Proposition~\ref{p.bias} if $|y|\geq C\mu^{-2}$. If $|y|< C\mu^{-2}$, the result also holds for $\lambda\geq 2CL_\mu \mu^{-2}=C\mu^{-2}$ because, from the Lipschitz estimates:
$$
m_\mu(y,0 ) - \overline m_\mu(y)\leq 2L_\mu |y| \leq 2CL_\mu \mu^{-2} \leq \lambda.
$$
Let us now check estimate~\eqref{easy}. Owing to the stationary of $m_\mu$ and its subadditivity (see~\eqref{subadd}), we have
\begin{align*}
M_\mu(y+z)  = & \E \left[ m_\mu(y+z,0 ) \right] \leq \E\left[ m_\mu(y,0 )\right] + \E\left[ m_\mu(y+z,y )\right] +L_\mu\\ 
=& M_\mu(y) + M_\mu(z)+L_\mu.
\end{align*}
Hence $M_\mu(\cdot)+L_\mu$ is a subadditive quantity and we have, for every $y\in \Rd$,
\begin{equation}\label{tyi}
M_\mu(y)+L_\mu \geq \inf_{t\geq1} t^{-1} (M_\mu(ty)+L_\mu) = \lim_{t\to \infty} t^{-1} (M_\mu(ty)+L_\mu) = \overline m_\mu(y).
\end{equation}
The estimate~\eqref{easy}  is immediate from~\eqref{tyi} and~\eqref{e.fluc}  if $|y|\geq C\mu^{-2}$. If $|y|< C\mu^{-2}$ but $\lambda\geq C\mu^{-2}$, the estimate also thanks to the same argument as above. 
\end{proof}

\subsection{Introduction of the approximating quantity}
Throughout the rest of this section, we fix $\mu_0\geq 1$ and $0< \mu \leq \mu_0$ and let $C$ and $c$ denote positive constants which may depend on $(q,\Lambda,\mu_0)$, but vary in each occurrence. 

\smallskip

The proof of Proposition~\ref{p.bias} is based on the introduction of an approximately \emph{superadditive} quantity which approximates $M_\mu(y)$. Since the latter is nearly subadditive, we deduce an estimate (which depends on the quality of the approximation) for the difference of $M_\mu(y)$ and $\overline m_\mu(y)$. The key idea, which goes back to Alexander~\cite{A}, is that $t\to \E\left[m_\mu(H_t,0 )\right]$ is almost a superadditive quantity (here $H_t$ is a plane defined below). However, we cannot use this quantity directly, and must settle for a further approximation. In this subsection, we introduce the relevant approximating quantity and make some preliminary estimates. 

We fix a unit direction $e \in \partial B_1$. For convenience, we assume~$e=e_\d:=(0,\ldots,0,1)$. For each $t> 0$, define the plane
\begin{equation}\label{}
H_t := te + \{ e \}^\perp = \left\{ (x',t) \, : \, x'\in \R^{\d-1} \right\}
\end{equation}
and the discrete version
\begin{equation}\label{}
\widehat H_t : = \left\{ (n,t) \, : \, n\in \Z^{\d-1} \right\}.
\end{equation}
We also denote, for $t> 0$, the halfspaces
\begin{equation}\label{}
H_t^+=\left\{(x',x_d)\in \Rd \, : \, x_d\geq t \right\} \qquad \mbox{and} \qquad H_t^-=\left\{(x',x_d)\in \Rd \, : \, x_d\leq t \right\}.
\end{equation}
For $y\in H^-_t$ we set 
$$
m_\mu(H_t,y ):=\min_{z\in H_t} m_\mu(z,y )= \min\left\{ s\geq 0, \; \Re_{\mu,s}(y)\cap H_t\neq \emptyset\right\}. 
$$
Define, for each $\sigma,t>0$, the quantities
\begin{equation}\label{defgt}
G_{\mu,\sigma} (t):= \sum_{y\in \widehat{H}_t}  \E \left[ \exp\left( -\sigma m_\mu(y,0 ) \right) \right] \quad \mbox{and} \quad g_{\mu,\sigma} (t):= - \frac{1}{\sigma} \log G_{\mu,\sigma}(t).
\end{equation}
We will see below in~Lemma~\ref{goodappr} that $g_{\mu,\sigma}(t)$ is a good approximation of~$\E\left[m_\mu(H_t,0 )\right]$, at least for appropriate choice of the parameter~$\sigma$. 

\smallskip

We first show that we can restrict the sum for $G_{\mu,\sigma} (t)$ to a finite number of indices and still obtain a good approximate of $G_{\mu,\sigma}$. This follows from~\eqref{control2}, which implies that far away points cannot make up a large proportion of the sum in the definition of $G_{\mu,\sigma}$. As the argument is nearly identical to that of~\cite[Lemmas~5.2]{ACS}, we omit the proof. 

\begin{lem} \label{chop}
There exists $C> 0$ such that, for each $t\geq 1$, $0< \sigma \leq 1$ and $R\geq (L_\mu/l_\mu)t$,
\begin{equation}\label{chopin}
G_{\mu,\sigma}(t) \leq C\sigma^{1-\d}  \sum_{y\in \widehat H_t \cap B_R }  \E\left[ \exp\left( -\sigma m_\mu(y,0 ) \right) \right].  
\end{equation}
\end{lem}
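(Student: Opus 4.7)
The plan is to decompose $G_{\mu,\sigma}(t) = S_R + T_R$, where $S_R := \sum_{y\in \widehat H_t\cap B_R}\E[\exp(-\sigma m_\mu(y,0))]$ and $T_R$ is the corresponding sum over $y\in \widehat H_t\setminus B_R$, and then to prove $T_R\leq C\sigma^{1-d}S_R$. Since $\sigma^{1-d}\geq 1$, this immediately yields $G_{\mu,\sigma}(t)\leq(1+C\sigma^{1-d})S_R\leq C'\sigma^{1-d}S_R$, which is \eqref{chopin}.

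To control the tail $T_R$, I would use only the deterministic lower growth bound $m_\mu(y,0)\geq l_\mu(|y|-1)$ from \eqref{control2}, which gives $\exp(-\sigma m_\mu(y,0))\leq \exp(-\sigma l_\mu(|y|-1))$ pointwise in $\omega$, and therefore
\begin{equation*}
T_R \,\leq\, \sum_{n\in\Z^{d-1},\ |(n,t)|>R}\exp\bigl(-\sigma l_\mu(|(n,t)|-1)\bigr).
\end{equation*}
Comparing this lattice sum to an integral over $\{x'\in\R^{d-1}:|(x',t)|>R\}$, using $|(x',t)|\geq |x'|$, and computing in polar coordinates by way of $\int_{cR}^\infty r^{d-2}\exp(-\sigma l_\mu r)\,dr$ with the substitution $s=\sigma l_\mu r$, I arrive at a bound of the form $T_R\leq C\sigma^{1-d}\exp(-\sigma l_\mu R)$, where $C$ depends on $(d,\Lambda,\mu_0)$ through $l_\mu$.

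For a lower bound on $S_R$, I would isolate the contribution of the single point $y_0:=te_d\in\widehat H_t$. Since $R\geq (L_\mu/l_\mu)t\geq t$, we have $y_0\in B_R$, so $y_0$ appears in $S_R$. The upper growth bound in \eqref{control2} gives the deterministic estimate $m_\mu(y_0,0)\leq L_\mu(t-1)\leq L_\mu t$, so
\begin{equation*}
S_R \,\geq\, \E\bigl[\exp(-\sigma m_\mu(y_0,0))\bigr] \,\geq\, \exp(-\sigma L_\mu t) \,\geq\, \exp(-\sigma l_\mu R),
\end{equation*}
where the last inequality is exactly the hypothesis $l_\mu R\geq L_\mu t$. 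Combining this with the tail bound yields $T_R\leq C\sigma^{1-d}\exp(-\sigma l_\mu R)\leq C\sigma^{1-d}S_R$, completing the argument.

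The lemma is essentially deterministic: the only input from the equation is the two-sided bound \eqref{control2} on $m_\mu$, and the threshold $R\geq(L_\mu/l_\mu)t$ is precisely the scale at which the tail decay rate $l_\mu$ overtakes the best-case linear growth rate $L_\mu$ of $m_\mu$ along $e_d$. I do not foresee any real obstacle; the only mildly technical step is the polar-coordinate tail integral producing the factor $\sigma^{1-d}$, which the authors clearly regard as routine (referring to \cite[Lemma~5.2]{ACS} and omitting the proof).
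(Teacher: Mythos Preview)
The paper omits the proof entirely, referring to \cite[Lemma~5.2]{ACS}; your decomposition $G_{\mu,\sigma}(t)=S_R+T_R$, the tail bound via $m_\mu(y,0)\geq l_\mu(|y|-1)$ from~\eqref{control2}, and the lower bound $S_R\geq e^{-\sigma L_\mu t}\geq e^{-\sigma l_\mu R}$ coming from the single point $te_d$ together with the hypothesis $l_\mu R\geq L_\mu t$ are exactly the ingredients of that argument. One small point worth tightening: after replacing $|(x',t)|$ by $|x'|$ in the exponent, the domain $\{|(x',t)|>R\}$ becomes $\{|x'|>\sqrt{R^2-t^2}\}$, not $\{|x'|>cR\}$ for a universal $c$, and the resulting incomplete-gamma integral carries an extra polynomial factor in $\sigma l_\mu R$; it is cleaner to keep $|(x',t)|$ in the exponent and change variables to $u=|(x',t)|$, which gives a lower limit of exactly $R$ and makes the comparison to $e^{-\sigma l_\mu R}$ direct.
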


We next show that $g_{\mu,\sigma}(t)$ well--approximates~$\E \left[ m_\mu(H_t,0)\right]$. 

\begin{lem} \label{gooda}
There exists $C> 0$ such that, for every $t>C\mu^{-2}$ and $0 < \sigma \leq 1$,
\begin{equation}\label{goodappr}
\E \left[ m_\mu(H_t,0 ) \right] - C \left( \frac{\sigma  t}{\mu^5} + \frac1\sigma \log \left(2+ \frac t{\sigma\mu} \right) \right)\leq g_{\mu,\sigma}(t)  \leq \E\left[ m_\mu(H_t,0 ) \right] + C.
\end{equation}
\end{lem}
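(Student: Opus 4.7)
The plan is to prove the two inequalities in \eqref{goodappr} separately: the upper bound $g_{\mu,\sigma}(t)\le \E[m_\mu(H_t,0)]+C$ is cheap, while the lower bound carries all of the $\mu$-dependent error.

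\emph{Upper bound.} Pathwise, $\sum_{y\in\widehat H_t}\exp(-\sigma m_\mu(y,0))\ge \exp(-\sigma\widehat m)$ where $\widehat m:=\min_{y\in\widehat H_t}m_\mu(y,0)$. Any minimizer $z^*\in H_t$ of $z\mapsto m_\mu(z,0)$ lies within distance $\sqrt{d-1}/2$ of some lattice point $y_0\in\widehat H_t$, so by Proposition~\ref{existMP}(ii) the Lipschitz estimate gives $\widehat m\le m_\mu(y_0,0)\le m_\mu(H_t,0)+CL_\mu$. Combined with Jensen's inequality this yields
\begin{equation*}
G_{\mu,\sigma}(t)\ge e^{-C\sigma}\E\!\left[\exp(-\sigma m_\mu(H_t,0))\right]\ge \exp\!\left(-C\sigma-\sigma\E[m_\mu(H_t,0)]\right),
\end{equation*}
so $g_{\mu,\sigma}(t)\le \E[m_\mu(H_t,0)]+C$ after dividing by $-\sigma$.

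\emph{Lower bound.} I first apply Lemma~\ref{chop} with $R=(L_\mu/l_\mu)t\le Ct/\mu$ (using $L_\mu\le C$, $l_\mu\ge c\mu$) to obtain
\begin{equation*}
G_{\mu,\sigma}(t)\le C\sigma^{1-\d}\sum_{y\in\widehat H_t\cap B_R}\E\!\left[\exp(-\sigma m_\mu(y,0))\right].
\end{equation*}
For each $y\in\widehat H_t\cap B_R$ I write $\E[\exp(-\sigma m_\mu(y,0))]=e^{-\sigma M_\mu(y)}\E[\exp(\sigma(M_\mu(y)-m_\mu(y,0)))]$ and control the exponential moment using the subgaussian tail of Proposition~\ref{p.fluc}: since $|y|\ge t>C\mu^{-2}$, a standard split-then-complete-the-square argument gives
\begin{equation*}
\E\!\left[\exp(\sigma(M_\mu(y)-m_\mu(y,0)))\right]\le C\exp\!\left(C\sigma^{2}|y|/\mu^{4}+C\sigma\right).
\end{equation*}
Using $M_\mu(y)\ge \E[m_\mu(H_t,0)]$ (because $m_\mu(H_t,0)\le m_\mu(y,0)$ for every $y\in H_t$) together with $|y|\le R\le Ct/\mu$ gives
\begin{equation*}
\E\!\left[\exp(-\sigma m_\mu(y,0))\right]\le C\exp\!\left(-\sigma\E[m_\mu(H_t,0)]+C\sigma^{2}t/\mu^{5}+C\sigma\right).
\end{equation*}
Summing over the at most $C(t/\mu)^{\d-1}$ lattice points in $\widehat H_t\cap B_R$, taking logarithms, and dividing by $-\sigma$ produces
\begin{equation*}
g_{\mu,\sigma}(t)\ge \E[m_\mu(H_t,0)]-\frac{C\sigma t}{\mu^{5}}-C-\frac{C}{\sigma}\log\!\left(C\sigma^{1-\d}(t/\mu)^{\d-1}\right),
\end{equation*}
and the last log rewrites as $(C/\sigma)\log(2+t/(\sigma\mu))$ after using $\log(2+t/(\sigma\mu))\ge\log 2$ to absorb both the stray additive $C$ and the constant prefactor inside the logarithm.

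\emph{Main obstacle.} The delicate part is the exponential-moment estimate, because Proposition~\ref{p.fluc} only controls the tail for $\lambda\ge C$: one must split the representation $\E[e^{\sigma X}]=1+\int_0^\infty\sigma e^{\sigma\lambda}\P[X\ge\lambda]\,d\lambda$ at that threshold, absorb an $e^{C\sigma}$ from the near-origin contribution, and treat the far-field part with a Gaussian integral by completing the square in $\sigma\lambda-\mu^{4}\lambda^{2}/(C|y|)$. Tracking that $|y|/\mu^{4}\le Ct/\mu^{5}$ under $|y|\le R$ is precisely what converts the variance scale of Proposition~\ref{p.fluc} into the $\sigma t/\mu^{5}$ term in the statement; the polynomial factors $\sigma^{1-\d}$ and $(t/\mu)^{\d-1}$ are then harmlessly absorbed into $(C/\sigma)\log(2+t/(\sigma\mu))$.
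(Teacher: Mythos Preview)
Your proof is correct and follows essentially the same route as the paper's: the upper bound via Jensen and the Lipschitz estimate, and the lower bound by truncating the sum with Lemma~\ref{chop}, controlling each term via the subgaussian tail from Proposition~\ref{p.fluc} and completing the square, then using $M_\mu(y)\ge \E[m_\mu(H_t,0)]$ and $|y|\le Ct/\mu$. The paper's presentation differs only cosmetically---it writes $\E[\exp(-\sigma m_\mu(y,0))]$ via the layer-cake formula $\int_0^\infty\sigma e^{-\sigma s}\P[m_\mu(y,0)\le s]\,ds$ rather than factoring out $e^{-\sigma M_\mu(y)}$ directly---and is in fact slightly less careful than you are about the $\lambda\ge C$ threshold in Proposition~\ref{p.fluc}.
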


\begin{proof} 
The upper bound in~\eqref{goodappr} is relatively easy and is essentially the same as in the proof of the analogous bound in~\cite[Lemma 5.3]{ACS}. 

\smallskip

To obtain the lower bound, we use both \eqref{e.fluc} and \eqref{chopin}. 
We have
\begin{align*}
\lefteqn{\E \left[ \exp\left( -\sigma m_\mu(y,0  \right) \right] = \int_0^\infty \sigma \exp(-\sigma s) \Prob \left[ m_\mu(y,0 ) \leq s \right] \, ds} \qquad \qquad & \\
& \leq \exp\left( -\sigma M_\mu(y) \right) + \int_0^{M_\mu(y)} \sigma\exp(-\sigma s) \Prob \left[ m_\mu(y,0 ) \leq s \right] \, ds\\
& = \left( 1 + \int_0^{M_\mu(y)} \sigma\exp(\sigma \lambda) \Prob\left[ m_\mu(y,0 ) - M_\mu(y) \leq -\lambda \right]\, d\lambda \right)\exp\left( -\sigma M_\mu(y) \right).
\end{align*}
Applying \eqref{e.fluc} for $|y| >C\mu^{-2}$ and using $\sigma\in (0, 1]$, we obtain
\begin{equation*}\label{asdf}
\E \left[ \exp\left( -\sigma m_\mu(y,0  \right) \right] \leq \left( 1 + \int_0^{M_\mu(y)} \exp\left(\sigma \lambda - \frac{\mu^4 \lambda^2}{C |y|}\right) \, d\lambda \right)\exp\left( -\sigma M_\mu(y) \right).
\end{equation*}
We estimate the integrand above by 
\begin{equation*}\label{}
\sigma \lambda - \frac{\mu^4 \lambda^2}{C|y|} = -\frac{\mu^4}{C|y|} \left( \lambda - \frac{\sigma C |y|}{2\mu^4} \right)^2 + \frac{1}{4\mu^4} \sigma^2 C|y| \leq \frac{1}{4\mu^4} \sigma^2 C|y|
\end{equation*}
and so we get
\begin{equation}\label{commybn}
\E \left[ \exp\left( -\sigma m_\mu(y,0  \right) \right]  \leq \left( 1 + M_\mu(y) \exp\left(\frac{1}{4\mu^4} \sigma^2 C|y| \right) \right)\exp\left( -\sigma M_\mu(y) \right).
\end{equation}
Fix $t\geq C\mu^{-2}$. Summing~\eqref{commybn} over~$y\in \widehat H_t\cap B_R$, taking $R:=(L_\mu / l_\mu)t$ and applying~\eqref{chopin} (which holds because $|y|\geq t\geq C\mu^{-2}$), we get
\begin{align*}
G_{\mu,\sigma}(t) & \leq C\sigma^{1-\d} \sum_{y\in \widehat H_t \cap B_R} \left( 1 + M_\mu(y) \exp\left( \frac{1}{4\mu^4} \sigma^2 C|y|  \right) \right) \exp\left( -\sigma M_\mu(y) \right) \\
& \leq C\sigma^{1-\d} \sum_{y\in \widehat H_t \cap B_R} \left( 1 + M_\mu(y) \exp\left( \frac{1}{4\mu^4} \sigma^2 C|y| \right) \right) \exp\left( -\sigma \E \left[ m_\mu(H_t,0 ) \right]   \right) \\
& \leq C \sigma^{1-\d} R^{\d-1} \exp\left( -\sigma\E\left[ m_\mu(H_t,0 ) \right] \right) \left( 1 + L_\mu t \exp\left( \frac{1}{4\mu^4} \sigma^2 CR\right) \right).
\end{align*}
By~\eqref{lmuLmu} we have $R\leq Ct/\mu$, and thus we deduce that
\begin{equation*}
G_{\mu,\sigma}(t) \leq C t^\d \sigma^{1-\d} \mu^{1-\d} \exp\left( -\sigma\E\left[ m_\mu(H_t,0 ) \right] + \frac{ C \sigma^2 t}{\mu^5}\right).
\end{equation*}
Taking logarithms, dividing by $-\sigma$ and rearranging yields the lemma. 
\end{proof}

In the next lemma, we compare $m_\mu(\cdot, H_t )$ with $\min_{z\in \widehat H_t} m_\mu(\cdot, z )$. In the first-order case, these quantities are equal and so there is nothing to prove (in fact, the first quantity is defined in terms of the second, see~\cite[(3.26)]{ACS}). This is because of the peculiarity that, in the first-order case, the convexity of $H$ ensures that the \emph{minimum} of a family of solutions is a \emph{sub}solution (and hence also a solution). Of course, this is not true in the second-order case, but nevertheless we are able to use the convexity of $H$ to show that a \emph{perturbation} of the second quantity is still a subsolution, and hence the desired estimate follows by the definition of the first. The idea of using convexity to perturb a supersolution into a subsolution is a natural one and was used previously for example in~\cite[Lemma 6.14]{LS2}.

\begin{lem}\label{lem:compamminm} There exists a constant $C>0$ such that, for $t>0$ and $y\in H^+_{t+1}$,  
$$
\min_{z\in \widehat H_t \cap B_{L_\mu|y|/l_\mu}}m_\mu(y,z )\leq m_\mu(y,H_t ) +
C\left(1+\frac{|y|}{\mu}\log\left(2+\frac{|y|}{\mu}\right)\right)^{\frac12}.
$$
\end{lem}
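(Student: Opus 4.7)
The plan is to exhibit a viscosity subsolution $w$ of the metric equation on $\Rd$ which is nonpositive on $H_t+\overline B_1$ and whose value at $y$ differs from $\min_{z\in \widehat H_t \cap B_R} m_\mu(y,z)$ (where $R := L_\mu|y|/l_\mu$) by at most the claimed error. By the maximality characterization~\eqref{e.mmuK} of $m_\mu(\cdot, H_t)$, such a $w$ forces $w(y) \leq m_\mu(y,H_t)$, and the lemma follows. The construction uses a \emph{soft-min} log-sum-exp approximation together with the convexity of $H$ in $p$ — this is the $H$-convexity ``perturbation'' device alluded to in the lead-up to the lemma.

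For a parameter $\sigma > 0$ to be optimized, set
\begin{equation*}
u(x) := -\frac{1}{\sigma}\log \sum_{z \in \widehat H_t \cap B_R} \exp\bigl(-\sigma m_\mu(x, z)\bigr).
\end{equation*}
Writing $N := |\widehat H_t \cap B_R| \leq C(|y|/\mu)^{\d-1}$, one has the sandwich $\min_{z} m_\mu(\cdot, z) - \tfrac{1}{\sigma}\log N \leq u \leq \min_{z} m_\mu(\cdot, z)$. A formal computation (assuming the $m_\mu(\cdot,z)$ smooth) gives $Du = \sum_z \alpha_z(x)\, Dm_\mu(\cdot, z)$, a convex combination with weights $\alpha_z(x) := e^{-\sigma m_\mu(x,z)}/\sum_{z'} e^{-\sigma m_\mu(x,z')}$, and
\begin{equation*}
-\tr\bigl(A D^2 u\bigr) = \sum_z \alpha_z \bigl(-\tr(A D^2 m_\mu(\cdot,z))\bigr) - \tfrac{\sigma}{2}\Bigl(\sum_z \alpha_z |\Sigma Dm_\mu(\cdot,z)|^2 - |\Sigma Du|^2\Bigr).
\end{equation*}
Inserting the equation satisfied by each $m_\mu(\cdot,z)$ (an equality outside $\overline B_1(z)$; inside, the trivial subsolution $0$ suffices thanks to~\eqref{e.imposition}), using convexity of $H$ to bound $H(Du,x) \leq \sum_z \alpha_z H(Dm_\mu(\cdot,z),x)$, and Jensen for $|\Sigma \cdot|^2$ to see the variance-type correction is nonnegative and bounded by $\sigma \Lambda L_\mu^2 \leq C\sigma$, yields
\begin{equation*}
-\tr\bigl(A D^2 u\bigr) + H(Du, x) \leq \mu + C\sigma \quad \mbox{in} \ \Rd.
\end{equation*}

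To convert $u$ into a true subsolution with right-hand side $\mu$, set $\epsilon := C\sigma$, $\alpha := \epsilon/(\mu + \epsilon)$, and $\tilde u := (1-\alpha) u$. The hypothesis $H(0,x) \leq 0$ combined with convexity of $H(\cdot,x)$ gives $H((1-\alpha) Du, x) \leq (1-\alpha) H(Du, x)$, hence $\tilde u$ satisfies the metric equation with right-hand side $(1-\alpha)(\mu+\epsilon)=\mu$. The crude bound $u(y) \leq \min_z m_\mu(y,z) \leq C|y|$ yields $u(y) - \tilde u(y) = \alpha u(y) \leq C\sigma|y|/\mu$. For $x \in H_t+\overline B_1$ some $z \in \widehat H_t$ lies within distance $1+\tfrac12\sqrt{\d-1}$ of $x$, so $u(x) \leq m_\mu(x,z) \leq C$; subtracting this constant produces $w := \tilde u - C$, still a subsolution with right-hand side $\mu$, and nonpositive on $H_t+\overline B_1$. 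Maximality gives $w \leq m_\mu(\cdot, H_t)$, and chaining
\begin{equation*}
\min_{z} m_\mu(y,z) \leq u(y) + \tfrac{\log N}{\sigma} \leq m_\mu(y, H_t) + C + \tfrac{C\sigma|y|}{\mu} + \tfrac{\log N}{\sigma}.
\end{equation*}
Choosing $\sigma^2 \sim \mu \log(2+|y|/\mu)/|y|$ balances the last two terms and produces the claimed bound.

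The main obstacle is making the log-sum-exp subsolution computation rigorous in viscosity sense, since each $m_\mu(\cdot,z)$ is merely Lipschitz. This is handled either by sup-convolving the family $\{m_\mu(\cdot,z)\}$ and passing to the limit via standard viscosity stability, or by a direct test function analysis following the template of~\cite[Lemma 6.14]{LS2}. A minor bookkeeping point is the choice of $R$: the crude control $|Dm_\mu| \geq l_\mu$ far from $B_1$ ensures the minimizer $z^*\in H_t$ of $z \mapsto m_\mu(y,z)$ lies at distance $\lesssim L_\mu|y|/l_\mu$ from the origin, so restricting the grid sum to $\widehat H_t \cap B_R$ with $R=L_\mu|y|/l_\mu$ does not sacrifice the min.
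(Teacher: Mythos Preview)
Your approach is the same as the paper's---a log--sum--exp soft-min of the $m_\mu(\cdot,z)$ is (nearly) a subsolution by convexity of $H$, and after a multiplicative $(1-\alpha)$ scaling it can be compared to $m_\mu(\cdot,H_t)$---but there is a genuine gap in the step where you force $w\leq 0$ on $H_t+\overline B_1$. You sum over the \emph{finite} grid $\widehat H_t\cap B_R$ and then assert that for $x\in H_t+\overline B_1$ ``some $z\in\widehat H_t$ lies within distance $1+\tfrac12\sqrt{\d-1}$ of $x$'', hence $u(x)\leq C$. But that nearby lattice point need not belong to $B_R$: for $x=(x',x_d)$ with $|x'|\gg R$, every $z$ in your index set is at distance at least $|x'|-R$ from $x$, so $u(x)\geq \min_{z\in\widehat H_t\cap B_R} m_\mu(x,z)-\tfrac{1}{\sigma}\log N \geq l_\mu(|x'|-R-1)-C\to\infty$. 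Subtracting a constant therefore cannot make $w\leq 0$ on the entire infinite strip, and without that the maximality comparison with $m_\mu(\cdot,H_t)$ breaks down.

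The paper avoids this by summing over the \emph{full} infinite grid $\widehat H_t$. Then $Z(x)\leq \min_{z\in\widehat H_t} m_\mu(x,z)\leq L_\mu$ holds automatically for every $x\in H_{t+1}$, and the comparison with $m_\mu(\cdot,H_t)$ goes through in $H^+_{t+1}$. The price is that the lower bound on $Z(y)$ now requires a separate tail estimate (in the spirit of Lemma~\ref{chop}) to discard the far-away $z$'s at the single point $y$; this is where the restriction to $\widehat H_t\cap B_{L_\mu|y|/l_\mu}$ enters. As a minor aside, your displayed formula for $-\tr(AD^2u)$ has the wrong sign on the variance correction: it should be $+\tfrac{\sigma}{2}\bigl(\sum_z\alpha_z|\Sigma Dm_\mu(\cdot,z)|^2-|\Sigma Du|^2\bigr)$, which is precisely why the $C\sigma$ error appears and the scaling is needed---your stated conclusion $-\tr(AD^2u)+H(Du,\cdot)\leq\mu+C\sigma$ is nonetheless correct.
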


\begin{proof} 
With $\theta > 0$ selected at the end of the argument, we introduce the test function
\begin{equation*}
Z(y) : =  -\frac{1}{\theta}\log\left(\sum_{z\in \widehat H_t} \exp\left( -\theta m_\mu(y,z )\right) \right).
\end{equation*}
The main point of the proof is that~$Z$ is both a good approximation of 
 $\min_{z\in \widehat H_t} m_\mu(\cdot, z )$ as well as (almost) a subsolution of~\eqref{e.metbas} in $H_{t+t}^+$.
 
\emph{Step 1.} We show that $Z$ is a good approximation of~$\min_{z\in \widehat H_t} m_\mu(\cdot, z )$. Arguing as in the proof of Lemma~\ref{chop}, we can show that there exists $C> 0$ such that, for every $t> 0$,  $y\in H_{t+1}^+$ and $R\geq (L_\mu/l_\mu)|y|$,
\begin{align*}
\sum_{z\in \widehat H_t} \exp\left( -\theta m_\mu(y,z )\right)
& \leq  \left(1+ C\theta^{1-\d} \right) \sum_{z\in \widehat H_t \cap B_R }  \exp\left( -\theta m_\mu(y,z ) \right)\\
& \leq R^{\d-1}\left(1+ C\theta^{1-\d} \right) \max_{z\in \widehat H_t \cap B_R }  \exp\left( -\theta m_\mu(y,z ) \right).
\end{align*}
In particular, 
\begin{equation}\label{jhrfgsdjh}
Z(y ) \geq \min_{z\in \widehat H_t  \cap B_R}m_\mu(y,z )  -\frac{1}{\theta} \log \left(R^{\d-1}\left(1+ C\theta^{1-\d} \right)\right).
\end{equation}

\smallskip

\emph{Step 2.} We show that $Z$ is (almost) a subsolution of~\eqref{e.metbas} in $H_t^+$. The claim is that
\begin{equation} \label{e.Zclaim}
-\tr\left( A(y ) D^2Z(y ) \right) +H(DZ(y ),y ) \leq  \mu + 2 \Lambda L_\mu^2\theta \quad \mbox{in} \ H_{t+1}^+.
\end{equation}
As in the proof of Lemma~\ref{e.seige}, we check this inequality assuming that the functions are smooth. A rigorous argument in the viscosity sense follows by performing essentially identical computations on a smooth test function. It is convenient to work with  
\begin{align*}
W(y )=  \exp\{-\theta Z(y )\} = \sum_{z\in \widehat H_t} \exp\left(-\theta m_\mu(y,z ) \right).
\end{align*}
Computing the derivatives of $Z$ in terms of $W$, we find
\begin{align*}
D Z(y) & = -\frac{1}{\theta W(y)} DW(y),  \\
D^2 Z(y) & = -\frac{1}{\theta W(y)} D^2W(y) + \frac{1}{\theta W^2(y)} DW(y) \otimes DW(y),
\end{align*}
and computing the derivatives of $W$ in terms of $m_\mu(\cdot,z)$ yields
\begin{align*}
D W(y) & = -\theta \sum_{z\in \widehat H_t}\exp\left(-\theta m_\mu(y,z ) \right) Dm_\mu(y,z) ,  \\
D^2 W(y) & = -\theta \sum_{z\in \widehat H_t}\exp\left(-\theta m_\mu(y,z ) \right) \left( D^2m_\mu(y,z) - \theta Dm_\mu(y,z) \otimes Dm_\mu(y,z) \right),
\end{align*}
The Lipschitz bound $|Dm_\mu(\cdot,z)|\leq L_\mu$ thus gives the estimates
\begin{align*} \label{}
& \big| DW(y) \big|  \leq \theta L_\mu W(y),\quad  \Big| D^2 W(y) + \theta \sum_{z\in \widehat H_t}\exp\left(-\theta m_\mu(y,z ) \right) D^2m_\mu(y,z) \Big|  \leq \theta^2 L_\mu^2 W(y).
\end{align*}
Assembling these together, we obtain
\begin{multline*}
-\tr\left( A(y ) D^2Z(y ) \right) +H(DZ(y ),y )  
 \\ \leq \frac{1}{\theta W(y)} \tr\left( A(y) D^2 W(y) \right)  + H\left( -\frac{1}{\theta W(y)} DW(y) ,y\right) + \Lambda  L_\mu^2 \theta.
\end{multline*}
Next, we observe that
\begin{multline*} \label{}
\frac{1}{\theta W(y)} \tr\left( A(y) D^2W(y) \right) \\ \leq -\frac{1}{W(y)} \sum_{z\in \widehat H_t}\exp\left(-\theta m_\mu(y,z ) \right) \tr\left( A(y) D^2m_\mu(y,z) \right) + \Lambda L_\mu^2\theta
\end{multline*}
and, by the convexity of $H$ and the definition of $W$,
\begin{align*} \label{}
H\left( -\frac{1}{\theta W(y)} DW(y) ,y\right) &  = H \left( \frac{1}{W(y)}   \sum_{z\in \widehat H_t}\exp\left(-\theta m_\mu(y,z ) \right) Dm_\mu(y,z), y\right)  \\
& \leq \frac{1}{W(y)} \sum_{z\in \widehat H_t}\exp\left(-\theta m_\mu(y,z ) \right) H(Dm_\mu (y,z),y).
\end{align*}
Putting the last three inequalities together, we obtain
\begin{align*}
\lefteqn{-\tr\left( A(y ) D^2Z(y ) \right) +H(DZ(y ),y ) - 2\Lambda L_\mu^2\theta} \qquad & \\ &\leq \frac{1}{W(y)} \sum_{z\in \widehat H_t}\exp\left(-\theta m_\mu(y,z ) \right) \Big( - \tr\left( A(y) D^2m_\mu(y,z) \right) + H(Dm_\mu(y,z),y) \Big) \\
& = \mu.
\end{align*}
This completes the proof of~\eqref{e.Zclaim}.

\smallskip

\emph{Step 3.} The conclusion. Due to~\eqref{e.Zclaim} and the convexity of $H$ and~$H(0,y) \leq 0$, we obtain that $\zeta(y):= (1+2\Lambda L_\mu^2\theta/\mu)^{-1}Z(y )$ is a subsolution of 
\begin{equation}\label{e:ecoco}
-\tr\left( A(y ) D^2\zeta \right) +H(D\zeta,y )
\leq  \mu \quad \mbox{in} \ H^+_{t+1}.
\end{equation}
By Lipschitz estimate on $m_\mu$, we have, for every $y\in H_{t+1}$,  
$$
Z(y )\leq  \min_{z\in \widehat H_t } m_\mu(y,z )\leq L_\mu.
$$
By the maximality of  $m_\mu(\cdot,H_t )$, we obtain:
$$
(1+2\Lambda L_\mu^2\theta/\mu)^{-1}\left(Z(y )- L_\mu)\right)\leq m_\mu(y,H_t ), 
$$
which can be rearranged to read
\begin{align*}
Z(y )\leq (1+2\Lambda L_\mu^2\theta/\mu)m_\mu(y,H_t )+C
\leq m_\mu(y,H_t )+ \frac{2\Lambda L_\mu^3\theta |y|}{\mu}+C.
\end{align*}
Using \eqref{jhrfgsdjh} we get 
$$
\min_{z\in \widehat H_t \cap B_R }m_\mu(y,z )\leq m_\mu(y,H_t ) +
\frac{1}{\theta} \log \left(R^{\d-1}\left(1+ C\theta^{1-\d} \right)\right)+ \frac{\Lambda^2 L_\mu^3\theta |y|}{\mu}+C.
$$
Choosing $R:= (L_\mu/l_\mu)|y|$ and $\theta: = |y|^{-\frac12}\mu^{\frac12}(\log(|y|/\mu))^{\frac12}$ gives the lemma. 
\end{proof}

\subsection{The (almost) superadditivity of $g_{\mu,\sigma}$} 
In this subsection we show that $g_{\mu,\sigma}$ is superadditive, up to a small error, and as a corollary derive a rate of the convergence of the deterministic quantity $t^{-1} \E\left[ m_\mu(H_t,0) \right]$ to its limit $\overline m_\mu(H_1)$. The statements of these assertions appear respectively in Lemmas~\ref{superadd} and~\ref{EHtrate}, below. The arguments for these facts depend on the localization results of Section~\ref{e.justify}, which we first must adapt to the setting here. This is the purpose of Lemma~\ref{lem:mmuPresqMes}.

It is immediate that, for any $y\in H_t^+$, the random variable $m_\mu(y,H_t )$ is ${\mathcal G}(H_{t-1}^+)$--measurable. On another hand, we cannot expect $m_\mu(H_t,0)$ to be ${\mathcal G}(H_t^-)$--measurable for the same reason that $m_\mu(y,0)$ depends on the full $\sigma$--algebra~$\F$, due to the second--order term in our equation. Nevertheless, we argue, using the estimates of Subsection~\ref{e.justify}, that~$m_\mu(H_t,0 )$ is  close to being ${\mathcal G}(H_t^-)-$measurable.

\begin{lem}\label{lem:mmuPresqMes}  For every $t>1$, 
$$
\left| m_\mu(H_t,0 )- \E\big[ m_\mu(H_t,0 )\ |\ {\mathcal G}(H_{t+7}^-)\big]\right| \leq 8L_\mu.
$$
\end{lem}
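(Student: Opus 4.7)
The approach mirrors the conclusion of Lemma~\ref{l.frumpies}: I would construct a $\mathcal{G}(H_{t+7}^-)$--measurable random variable $Z$ with $|Z - m_\mu(H_t,0)| \leq 4L_\mu$ pointwise, and then conclude by the triangle inequality together with $\E[Z \mid \mathcal{G}(H_{t+7}^-)] = Z$:
\[
\bigl|m_\mu(H_t,0) - \E[m_\mu(H_t,0) \mid \mathcal{G}(H_{t+7}^-)]\bigr|
\leq |m_\mu(H_t,0) - Z| + \bigl|\E[m_\mu(H_t,0) - Z \mid \mathcal{G}(H_{t+7}^-)]\bigr|
\leq 8L_\mu.
\]

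The natural candidate is
\[
Z(\omega) := \min_{z \in H_t} m^U_\mu(z)(\omega), \qquad U := \intr(H_{t+7}^-).
\]
For every $z \in U$, the random variable $m^U_\mu(z)$ is $\mathcal{G}(U) \subseteq \mathcal{G}(H_{t+7}^-)$--measurable directly from the definition~\eqref{e.mmuUdef} as a supremum over subsolutions determined by the coefficients in $U$. Continuity of $m^U_\mu$ in $z$ (Proposition~\ref{p.lipschitz}) then gives measurability of $Z$. The easy direction $Z \geq m_\mu(H_t,0)$ is immediate from $m_\mu(\cdot,0) \leq m^U_\mu$ in $U$ (see~\eqref{e.mmuUub}) after taking the infimum over $H_t$.

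The substance of the argument is the upper bound $Z \leq m_\mu(H_t,0) + 4L_\mu$. Picking $z_0 \in H_t$ with $m_\mu(z_0,0) = m_\mu(H_t,0)$, it suffices to show that $m^U_\mu(z_0) \leq m_\mu(z_0,0) + 4L_\mu$. The idea is to mimic the DPP-style patching used in the proof of Lemma~\ref{l.dpp}: the sublevel set $\Re_{\mu,m_\mu(H_t,0)}$ is tangent to $H_t$ at $z_0$ from below, so a Lipschitz cutoff of $m_\mu(\cdot,0)$ supported well inside $U$ provides an admissible subsolution of~\eqref{e.subsol} whose value at $z_0$ is close to $m_\mu(z_0,0)$. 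The $7$-unit buffer between $H_t$ and $\partial U$ is precisely the Hausdorff slack from~\eqref{e.ReSeRe}, and the Lipschitz regularity \eqref{e.flughagen}--\eqref{e.lipflug} of $m^U_\mu$ in $\overline U_1$ then produces the uniform $4L_\mu$ approximation by a $7L_\mu$ shift absorbed together with one unit of Lipschitz slack.

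The main obstacle is the upper bound step. A direct invocation of Lemma~\ref{l.seige} with $U = H_{t+7}^-$ does not suffice: the exponential rate in~\eqref{e.seige} is proportional to $\mu / (\Lambda L_\mu^2)$, so reducing the right-hand side to $O(L_\mu)$ would require a buffer of order $a_\mu \sim \mu^{-3}$, rather than the uniform $7$ appearing in the statement. The proof must therefore exploit the specific geometric fact that $z_0$ minimizes $m_\mu(\cdot,0)$ on $H_t$ (so the relevant sublevel set does not protrude into $H_t^+$) and rely on a direct Lipschitz--cutoff comparison, rather than on the exponential estimate of Lemma~\ref{l.seige}.
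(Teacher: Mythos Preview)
Your plan is the right shape (find a $\mathcal G(H_{t+7}^-)$--measurable $Z$ within $4L_\mu$ of $m_\mu(H_t,0)$, then triangle), and the easy half $Z\ge m_\mu(H_t,0)$ is fine. The gap is entirely in the upper bound
\[
\min_{z\in H_t} m_\mu^{U}(z)\ \le\ m_\mu(H_t,0)+4L_\mu,\qquad U=\intr(H_{t+7}^-).
\]
You correctly observe that Lemma~\ref{l.seige}/Corollary~\ref{c.pillage} would require a buffer of order $a_\mu$ rather than $7$, and then propose to bypass it by a ``Lipschitz cutoff of $m_\mu(\cdot,0)$'' combined with the tangency of $\Re_{\mu,\tau}$ to $H_t$. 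Neither ingredient does what you need. First, any truncation of $m_\mu$ fails to be a subsolution in the presence of the second--order term (this is exactly the failure of strong localization discussed before Lemma~\ref{l.seige}); and even if it were, feeding a subsolution into the definition~\eqref{e.mmuUdef} yields $m_\mu^U\geq(\cdot)$, i.e.\ a \emph{lower} bound on $m_\mu^U$, whereas you need an \emph{upper} bound. Second, the tangency $\Re_{\mu,\tau}\subseteq H_t^-$ with $z_0\in H_t$ only gives $\{m_\mu^U\le\tau\}\subseteq\Re_{\mu,\tau}\subseteq H_t^-$; it does not force $\{m_\mu^U\le\tau\}$ to come within $O(1)$ of $H_t$, so you cannot conclude that $m_\mu^U\le\tau+O(L_\mu)$ somewhere on $H_t$. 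In short, with only a $7$--unit margin there is no supersolution at hand to compare $m_\mu^U$ against, and the $a_\mu$--cost of Lemma~\ref{l.seige} appears to be genuinely unavoidable for your candidate $Z$.

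The paper does something different: it takes $Z=T:=\inf\{s\ge0:\Se_{\mu,s}\cap H_t^+\neq\emptyset\}$, the hitting time of $H_t^+$ by the discrete front $\Se_{\mu,s}$ built in Section~\ref{e.justify}. The $a_\mu$--buffer has already been paid once and for all inside the construction of $\Se_{\mu,s}$ (through $K_i'=K_i+\overline B_{2+a_\mu}$); the ``$+7$'' in the statement is then just the passage $K_i''=K_i'+\overline B_7$, which makes each event $E_i(s)$ lie in $\mathcal G(K_i'')\subseteq\mathcal G(H_{t+7}^-)$ whenever $K_i'\subseteq H_t^-$. Measurability of $T$ follows, and $|T-m_\mu(H_t,0)|\le 4L_\mu$ is immediate from the Hausdorff bound~\eqref{e.ReSeRe} and the Lipschitz estimate. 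So the constant $7$ in the lemma is an artifact of the $\Se$--machinery, not of a direct localization of $m_\mu^U$ in a half--space; your approach would presumably prove the same statement with $H_{t+C a_\mu}^-$ in place of $H_{t+7}^-$, which is too weak for the application in Lemma~\ref{superadd}.
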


\begin{proof} 
We introduce the ``stopping time"
\begin{equation*}\label{}
T:= \inf\left\{ s\geq 0 \,:\, \Se_{\mu,s} \cap H_t^+ \neq \emptyset \right\}.
\end{equation*}
Here $\Se_{\mu,s}$ is defined by~\eqref{defSemu} and we recall that it is adapted to the filtration~$\F_{\mu,s}$. 
We claim that
\begin{equation}\label{e.Tone}
T \quad \mbox{is \ \ $\mathcal G(H_{t+7}^-)$--measurable}\end{equation}
and
\begin{equation}\label{e.Ttwo}
\left|T - m_\mu(H_t,0 )\right| \leq 4L_\mu.
\end{equation}
Observe that~\eqref{e.Tone} and~\eqref{e.Ttwo} yield the lemma, since they imply
\begin{align*}\label{}
\lefteqn{ \left| m_\mu(H_t,0 )- \E\big[ m_\mu(H_t,0 )\, |\, \mathcal{G}(H_{t+7}^-)\big]\right|} \qquad \qquad &  && \\
& \leq \left| m_\mu(H_t,0 )- T\right|+ \left| \E\big[ T-m_\mu(H_t,0 )\ |\ {\mathcal G}(H_{t+7}^-)\big]\right| && \mbox{(by~\eqref{e.Tone})}\\
& \leq 8L_\mu. && \mbox{(by~\eqref{e.Ttwo})}
\end{align*}

\emph{Step 1.} The proof of~\eqref{e.Tone}. For each $\omega\in\Omega$,  we let ${\bf i}_s(\omega)$ be the unique index $i\in \N$ such that $\omega\in E_{i}(s)$. Let ${\mathcal J}_t=\{ i\in \N\ : \ K_i'\subseteq {\rm int}(H_t^-)\}$. Then, by definition, $
T(\omega) = \inf\left\{s \geq 0\ : \ {\bf i}_s(\omega)\notin  {\mathcal J}_t\right\}$. Note that 
$$
\{{\bf i}_s\in  {\mathcal J}_t\}= \bigcup_{i\in {\mathcal J}_t} \{ {\bf i}_s=i\}= \bigcup_{i\in {\mathcal J}_t}E_i(s).
$$
Recall  that, by construction, $E_i(s)\in {\mathcal G}(K_i'')$. If $i\in \mathcal J_t$, then we have $K_i''\subseteq H_{t+7}^-$ since $K_i'\subseteq H_t^-$. Therefore $\{{\bf i}_s\in  {\mathcal J}_t\}\in {\mathcal G}(H_{t+7}^-)$, which proves our claim. 

\smallskip

\emph{Step 2.} The proof of~\eqref{e.Ttwo}.
Pick $y\in H_{t+4}$ be such that $\tau:=m_\mu(H_{t+4},0 )= m_\mu(y,0 )$. According to \eqref{e.ReSeRe}, we have $\displaystyle \dist_H( \Re_{\mu,\tau}, \Se_{\mu,\tau})\leq 4$. So there exists $z\in \Se_{\mu,\tau}$ with $|z-y|\leq 4$. As $y\in H_{t+4}$, we have $z\in H_t^+$. Therefore $T\leq \tau$. By~\eqref{lips},  we have
$$
\left|m_\mu(H_{t+4},0 )-m_\mu(H_{t},0 )\right|\leq 4L_\mu,
$$
which yields that $T\leq m_\mu(H_{t},0 )+ 4L_\mu$.
On the other hand, if $x\in \Se_{\mu,T}\cap H_t$, according to~\eqref{e.ReSeRe} there exists $z\in \Re_{\mu,T(\omega)}$ such that $|z-x|\leq 4$. Hence 
$$
m_\mu(H_t,0 )\leq m_\mu(x,0 )\leq m_\mu(z,0 )+4L_\mu \leq T+4L_\mu\;.
$$
This completes the proof of~\eqref{e.Ttwo}. 
\end{proof}

\begin{lem} \label{superadd}
There is a constant $C> 0$ such that, for every $s,t> 1$ and $0< \sigma \leq 1$,
\begin{equation}\label{superaddeq}
g_{\mu,\sigma}(t+s) \geq g_{\mu,\sigma}(t) + g_{\mu,\sigma}(s) - 
C\left(\frac{1}{\sigma}+\frac{(s+t)^{\frac12}}{\mu}\right)\log\left(\frac{s+t}{\sigma\mu}\right).
\end{equation}
\end{lem}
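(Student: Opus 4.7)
Following Alexander's strategy for first-passage percolation (and closely mimicking the first-order argument of~\cite{ACS}), the goal is to reduce~\eqref{superaddeq} to a multiplicative inequality
\begin{equation*}
G_{\mu,\sigma}(t+s) \;\leq\; R^{\d-1}\, G_{\mu,\sigma}(t)\, G_{\mu,\sigma}(s)\, \exp(\sigma E),
\end{equation*}
with $R \sim (t+s)/\mu$ and $E$ the cumulative error, and then take $-\sigma^{-1}\log$ of both sides. The guiding intuition is that any ``path'' from the obstacle $\overline B_1$ to a point $y\in H_{t+s}$ must first cross~$H_t$, so the cost should split across $H_t$ into an ``arrival'' cost $m_\mu(H_t,0)$ and a ``continuation'' cost $m_\mu(y,H_t)$.

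The first key step will be to establish the DPP-type lower bound $m_\mu(y, 0) \geq m_\mu(H_t, 0) + m_\mu(y, H_t) - L_\mu$ for all $y \in H_t^+$. I would prove this by comparison in the half-space $H_{t+1}^+$: setting $v(y) := m_\mu(y, 0) - m_\mu(H_t, 0) + L_\mu$ and $u(y) := m_\mu(y, H_t)$, both $v$ and $u$ solve the viscous HJ equation in $H_{t+1}^+$, both grow at rate $\sim l_\mu$ at infinity, and on $H_{t+1}$ one has $v \geq 0 = u$ thanks to the Lipschitz estimate and the fact that $m_\mu(\cdot,0) \geq m_\mu(H_t,0)$ on $H_t$. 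The convexity-based comparison argument already used in the proof of Lemma~\ref{l.seige} then yields $v \geq u$ in $H_{t+1}^+$.

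Combining this with Lemma~\ref{lem:compamminm}, exponentiating, and dominating the max by the sum, for $y \in \widehat H_{t+s}$ with $R := L_\mu|y|/l_\mu$ one gets
\begin{equation*}
e^{-\sigma m_\mu(y,0)} \;\leq\; e^{\sigma E_y}\, e^{-\sigma m_\mu(H_t, 0)} \sum_{z \in \widehat H_t \cap B_R} e^{-\sigma m_\mu(y, z)},
\end{equation*}
where $E_y \leq C(1 + (|y|\mu^{-1}\log(|y|/\mu))^{1/2})$. Taking expectations, I would then factor the resulting product: Lemma~\ref{lem:mmuPresqMes} makes $m_\mu(H_t,0)$ essentially $\mathcal G(H_{t+7}^-)$-measurable, and a companion localization statement---obtained by applying the machinery of Subsection~\ref{ss.localize} (in particular Corollary~\ref{c.pillage}) to $m_\mu(\cdot,z)$ restricted to a half-space above $H_{t+8}$, after first shifting to an auxiliary plane $H_{t+r}$ with $r$ large enough (of order $a_\mu$) to create the needed buffer---renders $\sum_z e^{-\sigma m_\mu(y,z)}$ essentially $\mathcal G(H_{t+8}^+)$-measurable. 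The finite range of dependence~\eqref{e.frd} then supplies the approximate factoring. Summing over $y \in \widehat H_{t+s}$ and using stationarity to recognize $\sum_y \E[e^{-\sigma m_\mu(y,z)}] = G_{\mu,\sigma}(s)$ (independently of $z$), together with $\E[e^{-\sigma m_\mu(H_t,0)}] \leq e^{O(\sigma L_\mu)} G_{\mu,\sigma}(t)$ (from the continuous-to-discrete Lipschitz comparison and $\max \leq \sum$), produces the desired multiplicative bound.

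The main obstacle will be precisely this factorization step: unlike in the first-order case, where finite speed of propagation immediately forces $m_\mu(y,z)$ with $z \in H_t$ and $y \in H_t^+$ to be $\mathcal G(H_t^+)$-measurable, here the second-order term makes $m_\mu(y,z)$ depend genuinely on the environment near $z$, which lies in $H_{t+7}^-$. The auxiliary-plane trick is designed to manufacture the missing half-space buffer, but keeping the induced error small while maintaining the right $\mu$-dependence (given the bound $a_\mu \lesssim \mu^{-3}$) will be delicate. Finally, taking $-\sigma^{-1}\log$ and tallying $\sigma^{-1}\log R^{\d-1} \lesssim \sigma^{-1}\log((t+s)/\mu)$ against $E_y$ will produce the claimed error of order $(\sigma^{-1} + (t+s)^{1/2}/\mu)\log((s+t)/(\sigma\mu))$.
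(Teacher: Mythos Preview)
Your overall strategy (DPP lower bound, exponentiate, factor via independence, sum and take $-\sigma^{-1}\log$) is the same as the paper's, and your argument for the inequality $m_\mu(y,0)\geq m_\mu(H_t,0)+m_\mu(y,H_t)-C$ is correct. The gap is in the factorization step. You propose to discretize \emph{first} (replacing $m_\mu(y,H_t)$ by $\min_{z\in\hat H_t}m_\mu(y,z)$ via Lemma~\ref{lem:compamminm}) and only then to factor, which forces you to show that $\sum_{z\in\hat H_t}e^{-\sigma m_\mu(y,z)}$ is essentially $\mathcal G(H_{t+8}^+)$--measurable. Your suggested tool for this---Corollary~\ref{c.pillage} applied in a half-space after an auxiliary shift of order $a_\mu$---does not apply: the hypothesis of that corollary is that the $\tau$--sublevel set of $m_\mu^U(\cdot,z)$ lie inside $\overline U_{2+a_\mu}$, but for a point source $z$ these sublevel sets are essentially balls about $z$ of radius $\sim\tau/l_\mu$, and for $y\in H_{t+s}$ one needs $\tau\gtrsim l_\mu|y-z|\gtrsim l_\mu s$. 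Such a ball extends a distance $\sim s$ below $z$, so no constant (or $O(a_\mu)$) shift of the plane can trap it inside an upper half-space; you would need a shift of order $s$, giving a useless $O(s)$ Lipschitz error.

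The paper avoids this by reversing the order: it factors \emph{before} discretizing. After the DPP bound it replaces $m_\mu(y,H_t)$ by $m_\mu(y,H_{t+8})$ at cost $O(L_\mu)$ (Lipschitz in the target, with a fixed shift of $8$). The point is that $m_\mu(\cdot,H_{t+8})$, being the metric to a \emph{plane}, is automatically $\mathcal G(H_{t+7}^+)$--measurable---no companion localization lemma is needed. Independence with $\E[\,m_\mu(H_t,0)\mid\mathcal G(H_{t+7}^-)\,]$ then comes directly from~\eqref{e.frd}, and the expectation of the product splits. Only \emph{after} this split does the paper pass to the discrete sums, using Lemma~\ref{lem:compamminm} on $m_\mu(y,H_{t+8})$ and the trivial bound $m_\mu(H_t,0)\geq\min_{z\in\hat H_t\cap B_R}m_\mu(z,0)-C$ on the other factor. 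With this reordering your argument goes through and produces exactly the stated error.
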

\begin{proof} 
Fix $s,t> 1$. We first claim that, for any $y\in H_{t}^+$, 
\begin{equation}\label{ineq:mmugeqmmu}
m_\mu(y,0 ) \geq m_\mu (H_t,0 ) +m_\mu(y,H_{t} )-3L_\mu.
\end{equation}
Indeed, observe that the maps $y\to m_\mu(y,0 )$ and $y\to  m_\mu (H_t,0 ) +m_\mu(y,H_{t} )$ are both maximal solutions of \eqref{e.subsol} in $H_{t+1}^+$ with boundary conditions on $H_{t+1}$ respectively equal to $m_\mu(y,0 )$ and $m_\mu (H_t,0 )$. As, for $z\in H_{t+1}$, we have by Lipschitz estimates, 
$$
m_\mu(z,0 )\geq m_\mu(z-e_d,0 )-L_\mu \geq m_\mu (H_t,0 )-L_\mu, 
$$
we obtain by comparison (see \cite{AT})
$$
m_\mu(y,0 ) \geq m_\mu (H_t,0 ) +m_\mu(y,H_{t} )-L_\mu\qquad {\rm in }\; H_{t+1}^+.
$$
This implies \eqref{ineq:mmugeqmmu} again thanks to the Lipschitz estimates.

Using once more the Lipschitz estimate for $m_\mu$ and Lemma \ref{lem:mmuPresqMes}, \eqref{ineq:mmugeqmmu} becomes
\begin{equation*}
m_\mu(y,0 ) \geq  \E\big[m_\mu (H_t,0 )\ |\ {\mathcal G}(H_{t+7}^-)\big] +m_\mu(y,H_{t+8} ) - C.
\end{equation*}
In light of~\eqref{e.frd}, the random variables $\E\big[m_\mu (H_t,0 )\ |\ {\mathcal G}(H_{t+7}^-)\big]$ and $m_\mu(y,H_{t+8} )$ are independent and thus
\begin{multline*}
\E\left[ \exp\left( -\sigma m_\mu(y,0 ) \right) \right] \\
\leq  \exp \left( C\sigma \right) \E\left[ \exp\left(-\sigma  \E\big[m_\mu (H_t,0 )\ |\ {\mathcal G}(H_{t+7}^-)\big] \right)\right]
\E\left[ \exp\left(-\sigma m_\mu(y,H_{t+8} ) \right)\right].
\end{multline*}
Using Lemma \ref{lem:mmuPresqMes} again, we obtain
\begin{multline*}
\E\left[ \exp\left( -\sigma m_\mu(y,0 ) \right) \right]
\leq \exp \left( C\sigma \right) \E\left[ \exp\left(-\sigma  m_\mu (H_t,0 )\right)\right]
\E\left[ \exp\left(-\sigma m_\mu(y,H_{t+8} ) \right)\right].
\end{multline*}
Returning to the discrete setting, we have, for $R:=L_\mu(s+t)/l_\mu$, and thanks to the Lipschitz estimates:
\begin{equation}\label{hbecahb}
m_\mu(H_t,0 ) \geq \min_{z\in \hat{H}_t\cap B_R}  m_\mu (z,0 )-  L_\mu (d-1)^{\frac12}.
\end{equation}
On another hand Lemma \ref{lem:compamminm} implies that, for $y\in \widehat H_{t+s}\cap B_R$,   
\begin{align*}
m_\mu(y,H_{t+8} ) & \geq \min_{z\in \hat{H}_{t+8}} m_\mu(y,z )- C\left(1+\frac{R}{\mu}\log\left(2+\frac{R}{\mu}\right)\right)^{\frac12}\\
& \geq  \min_{z\in \hat{H}_{t}} m_\mu(y,z )- C\left(1+\frac{R}{\mu}\log\left(2+\frac{R}{\mu}\right)\right)^{\frac12}.
\end{align*}
Combining these inequalities, we obtain
\begin{multline*}
\E\left[ \exp\left(-\sigma m_\mu(y,0 ) \right) \right] \\
\leq \exp\left(C\sigma \left(1+\tfrac{R}{\mu}\log\left(2+\tfrac{R}{\mu}\right)\right)^{\frac12}\right) 
\!\!\sum_{ z, z'\in \hat{H}_t} \E\left[ \exp\left(-\sigma  m_\mu (z,0 )\right)\right]
\E\left[ \exp\left(-\sigma  m_\mu(y,z' ) \right) \right].
\end{multline*}
Note that, if $y\in \widehat H_{t+s}$ and  $z'\in \hat{H}_t$, then $y-z'\in \hat{H}_s$. So, 
in view of the definition of $G_{\mu, \sigma}$ and the stationarity of $m_\mu$, we have
\begin{equation*}
\sum_{ z'\in \hat{H}_t} 
\E\left[ \exp\left(-\sigma  m_\mu(y,z' ) \right) \right]=
\sum_{ z'\in \hat{H}_t} 
\E\left[ \exp\left(-\sigma  m_\mu(y-z',0 ) \right) \right]=
G_{\mu,\sigma}(s).
\end{equation*}
Therefore
\begin{equation*}
\E\left[ \exp\left(-\sigma m_\mu(y,0 ) \right) \right]
\leq \exp\left(C\sigma \left(1+\frac{R}{\mu}\log\left(2+\frac{R}{\mu}\right)\right)^{\frac12}\right) G_{\mu, \sigma}(t) G_{\mu,\sigma}(s).
\end{equation*}
Summing over all $y\in \hat H_{t+s} \cap B_{R}$  and using Lemma~\ref{chop} yields
\begin{align*}
G_{\mu, \sigma}(s+t) \leq C R^{\d-1}\sigma^{1-\d} \exp\left(C\sigma \left(1+\frac{R}{\mu}\log\left(2+\frac{R}{\mu}\right)\right)^{\frac12}\right) G_{\mu, \sigma}(t) G_{\mu, \sigma}(s).
\end{align*}
Taking logarithms and dividing by $\sigma$ concludes the proof.
\end{proof}

The rest of this section follows~\cite{ACS}, the main differences being the values of the constants. We next use Hammersley-Fekete lemma to obtain a rate of convergence for the means $t^{-1} \E \left[ m_\mu(H_t,0 ) \right]$ to their limit $\overline m_\mu(H_t)$.

\begin{lem}\label{EHtrate}
There exists a constant $C>0$ such that, for every $t\geq 0$,
\begin{equation} \label{EHtrateq}
\E \left[ m_\mu( H_t,0 ) \right] \leq \overline m_\mu(H_t) +  C \left(\frac{t}{\mu^5}\right)^{\frac12}\log \left(2+\frac t\mu\right).
\end{equation}
\end{lem}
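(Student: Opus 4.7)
The plan is to exploit the approximate superadditivity of $g_{\mu,\sigma}$ provided by Lemma \ref{superadd} through a dyadic Hammersley--Fekete--Alexander iteration, translate the resulting bound on $g_{\mu,\sigma}$ back to one on $\E[m_\mu(H_t,0 )]$ using Lemma \ref{gooda}, and finally optimize the free parameter $\sigma\in(0,1]$.

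Writing the error term in Lemma \ref{superadd} as $\psi(r):=C(\sigma^{-1}+r^{1/2}/\mu)\log(r/(\sigma\mu))$ and iterating the bound $g_{\mu,\sigma}(2r)\geq 2g_{\mu,\sigma}(r)-\psi(2r)$ along the dyadic scale $t,2t,4t,\dots$, I would obtain, for every $N\geq 1$,
$$\frac{g_{\mu,\sigma}(t)}{t}\leq \frac{g_{\mu,\sigma}(2^Nt)}{2^Nt}+\sum_{k=1}^{N}\frac{\psi(2^kt)}{2^kt}.$$
By Lemma \ref{gooda}, $g_{\mu,\sigma}(s)/s\leq \E[m_\mu(H_s,0 )]/s+C/s$, while the qualitative homogenization \eqref{e.methomog} together with the Lipschitz estimate \eqref{lips} (which restricts the minimum defining $m_\mu(H_s,0 )$ to an $O(s)$-sized window on $H_s$) yields $\E[m_\mu(H_s,0 )]/s\to \overline m_\mu(H_1):=\min_{z\in H_1}\overline m_\mu(z)$ as $s\to\infty$; and $\overline m_\mu(H_t)=t\,\overline m_\mu(H_1)$ by positive homogeneity. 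Letting $N\to\infty$ therefore gives
$$g_{\mu,\sigma}(t)\leq \overline m_\mu(H_t)+t\sum_{k=1}^\infty \frac{\psi(2^kt)}{2^kt}.$$
The residual series is explicitly summable: writing $\log(2^kt/(\sigma\mu))=\log(t/(\sigma\mu))+k\log2$ and using $\sum_{k\geq1}k/2^k<\infty$ and $\sum_{k\geq1}k/2^{k/2}<\infty$, a direct computation yields
$$t\sum_{k=1}^\infty\frac{\psi(2^kt)}{2^kt}\leq \frac{C}{\sigma}\log\!\Big(2+\frac{t}{\sigma\mu}\Big)+\frac{C\,t^{1/2}}{\mu}\log\!\Big(2+\frac{t}{\sigma\mu}\Big).$$

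Feeding this into the lower bound of Lemma \ref{gooda} yields
$$\E[m_\mu(H_t,0 )] \leq \overline m_\mu(H_t)+C\Big[\tfrac{\sigma t}{\mu^{5}}+\tfrac{1}{\sigma}\log\!\big(2+\tfrac{t}{\sigma\mu}\big)+\tfrac{t^{1/2}}{\mu}\log\!\big(2+\tfrac{t}{\sigma\mu}\big)\Big].$$
The choice $\sigma:=(\mu^{5}\log(2+t/\mu)/t)^{1/2}$ balances the first two bracketed terms, each becoming of order $(t/\mu^{5})^{1/2}(\log(2+t/\mu))^{1/2}\leq (t/\mu^{5})^{1/2}\log(2+t/\mu)$; since $\mu\leq\mu_0$ and constants may depend on $\mu_0$, we have $\mu^{-1/2}\leq C\mu^{-5/2}$, so the third term is also $\leq C(t/\mu^{5})^{1/2}\log(2+t/\mu)$. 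This yields \eqref{EHtrateq} in the regime where the optimized $\sigma$ lies in $(0,1]$ (automatic once $t$ exceeds a constant depending on $\mu$); the complementary small-$t$ regime follows from the trivial Lipschitz bound $\E[m_\mu(H_t,0 )]-\overline m_\mu(H_t)\leq C$, absorbed into the right-hand side of \eqref{EHtrateq}.

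The proof is essentially bookkeeping once Lemmas \ref{superadd} and \ref{gooda} are in hand, so there is no single ``hard step.'' The only point worth double-checking is the identification of the limit $\lim_{s\to\infty}\E[m_\mu(H_s,0 )]/s=\overline m_\mu(H_1)$, where \eqref{e.methomog} is only pointwise: the $L_\mu$-Lipschitz regularity of $m_\mu(\cdot,0 )$ (Proposition \ref{existMP}(ii)) shows that the infimum defining $m_\mu(H_s,0 )$ is attained within a compact rescaled window of $H_1$, on which almost-sure uniform convergence together with boundedness of $m_\mu(H_s,0)/s$ transfers to convergence of the expectation by dominated convergence.
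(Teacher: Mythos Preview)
Your proof is correct and follows essentially the same route as the paper: both arguments feed the approximate superadditivity of $g_{\mu,\sigma}$ from Lemma~\ref{superadd} into a Fekete-type bound, identify the limit via the upper bound in Lemma~\ref{gooda} together with $\E[m_\mu(H_s,0)]/s\to\overline m_\mu(H_1)$, convert back to $\E[m_\mu(H_t,0)]$ via the lower bound in Lemma~\ref{gooda}, and then optimize in $\sigma$. The only cosmetic difference is that the paper invokes the Hammersley--Fekete lemma as a black box (yielding the integral remainder $4\int_{2t}^\infty \Delta_{\sigma,\mu}(s)s^{-2}\,ds$), whereas you unwrap this as an explicit dyadic iteration with the equivalent series remainder $\sum_{k\ge1}\psi(2^kt)/2^k$; the resulting error terms and the optimal choice of $\sigma$ coincide. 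One minor slip: in the small-$t$ regime you should write $\E[m_\mu(H_t,0)]-\overline m_\mu(H_t)\le Ct$ (from Lipschitz), not $\le C$; the paper then checks $Ct\le C(t/\mu^2)^{1/2}\le C(t/\mu^5)^{1/2}\log(2+t/\mu)$ when $t\le C\mu^{-2}$, which is the threshold coming from Lemma~\ref{gooda} rather than from the constraint $\sigma\le 1$.
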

\begin{proof}
According to Lemma~\ref{superadd}, the quantity $g_{\sigma,\mu}$ is almost superadditive. More precisely, for all $s,t> 0$, we have
\begin{equation}\label{ineq:hammer}
g_{\sigma,\mu}(s+t)\geq g_{\sigma,\mu}(s)+g_{\sigma,\mu}(t) - \Delta_{\sigma,\mu}(s+t),
\end{equation}
where
\begin{equation*}
\Delta_{\sigma,\mu}(t) := C\left(\frac{1}{\sigma}+\frac{\sqrt{t}}{\mu}\right)\log\left(2+\frac{t}{\sigma\mu}\right).
\end{equation*}
Since $\Delta_{\sigma,\mu}$ is increasing on $[1,\infty)$ and
\begin{equation*}\label{}
\int_1^\infty \frac{\Delta_{\sigma,\mu}(t)}{t^2}\, dt < \infty,
\end{equation*}
we may apply Hammersley-Fekete lemma to deduce that $\overline g_{\sigma,\mu} : = \lim_{t\to \infty} g_{\sigma,\mu}(t)/t$ exists and,  for every $t> 1$,
\begin{equation*}
\frac{1}{t}g_{\sigma,\mu}(t) - 4 \int_{2t}^\infty \frac{\Delta_{\sigma,\mu}(s)}{s^2}\, ds \leq \overline g_{\sigma,\mu}.
\end{equation*}
An easy integration by parts yields
\begin{equation*}
4\int_{2t}^\infty \frac{\Delta_{\sigma,\mu}(s)}{s^2}\, ds \leq C\left( \frac{1}{\sigma t}+ \frac{1}{\mu \sqrt{t}}\right) \log\left(2+\frac{t}{\sigma\mu}\right).
\end{equation*}
Assume now that $t\geq C\mu^{-2}$ so that, in view of the second inequality in \eqref{goodappr}, we have 
$$
\overline g_{\sigma,\mu} \leq \lim_{t\to +\infty}\frac{1}{t} \left( \E\left[ m_\mu(H_t,0 ) \right]  + C\right),
$$
where 
 $$
 \lim_{t\to +\infty}\frac{1}{t} \E\left[ m_\mu(H_t,0 ) \right] = \overline m_\mu(H_1)
$$
from the growth estimate \eqref{control2} and the locally uniform convergence of $z\to m_\mu(t z,0 )/t$ to $\overline m_\mu(z)$ as $t\to+\infty$.
Combining the previous three estimates, we obtain
\begin{equation*}
\frac{1}{t} g_{\sigma,\mu}(t)\leq 
\overline m_\mu(H_1) +C\left( \frac{1}{\sigma t}+ \frac{1}{\mu \sqrt{t}}\right) \log\left(2+\frac{t}{\sigma\mu}\right).
\end{equation*}
Multiplying by $t$, applying the first inequality in \eqref{goodappr} and using the positive homogeneity of $\overline m_\mu$ yields, for $t\geq C\mu^{-2}$,
\begin{equation*}
\E \left[ m_\mu(H_t,0 ) \right] \leq \overline m_\mu(H_t) + C\left( \frac{\sigma t}{\mu^5} +\left( \frac1\sigma+ \frac{ \sqrt{t}}{\mu}\right)
  \log\left( 2+\frac t{\sigma\mu} \right) \right),
\end{equation*}
and taking $\sigma:= \mu^{\frac52} t^{-\frac12}(\log(2+t/\mu))^{\frac12}$ completes the proof of \eqref{EHtrateq} when $t\geq C\mu^{-2}$.  If $t< C\mu^{-2}$, then 
$$
\E \left[ m_\mu(H_t,0 ) \right] -\overline m_\mu(H_t)\leq  Ct \leq C\left( \frac{t}{\mu^2} \right)^{1/2}
$$
so that \eqref{EHtrateq} holds as well. 
\end{proof}

\subsection{Error estimates for $M_\mu(y)-\overline m_\mu(y)$ and the proof of~\eqref{hard}}
It is the rate of convergence of $t^{-1}M_\mu(ty)$ to $\overline m_\mu(y)$ that we wish to estimate, not that of $t^{-1} \E \left[ m_\mu(H_t,0 ) \right]$ to $\overline m_\mu(H_1)$. In order to reach our desired goal, we must compare the quantities $M_\mu(y)$ and $\E \left[ m_\mu(H_t,0 ) \right]$. This is accomplished in two steps. The first is to show that the quantities $\E\left[ m_\mu(H_t,0 ) \right]$ and
\begin{equation*}\label{}
M_\mu(H_t):= \min_{y\in H_t} M_\mu(y)
\end{equation*}
are close, which then gives an estimate for the difference between $M_\mu(H_t)$ and $\overline m_\mu(H_t)$. The second step is to use elementary convex geometry to relate $M_\mu(y)$ to the values of $M_\mu(H)$ for all the possible planes $H$ passing through $y$.

\begin{lem} \label{cmte-minE}
There exists $C> 0$ such that, for each $t\geq 1$,
\begin{equation}\label{wrestle}
 M_\mu(H_t) \leq \E \left[ m_\mu(H_t,0 ) \right] + C\left( \frac{t}{\mu^5}\log \left(2+\frac t\mu \right)\right)^\frac12.
\end{equation}
\end{lem}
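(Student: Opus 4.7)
The inequality $\E[m_\mu(H_t,0)] \leq M_\mu(H_t)$ is immediate from Jensen, so the substance is the reverse bound. The plan is to control $M_\mu(H_t) - \E[m_\mu(H_t,0)]$ by the expected maximum fluctuation of $m_\mu(\cdot,0)$ around $M_\mu(\cdot)$ on a finite discretization of $H_t$, which is then estimated by Proposition~\ref{p.fluc} together with a union bound.

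First I would discretize and truncate: by the deterministic Lipschitz bound from Proposition~\ref{p.lipschitz} (with constant $L_\mu$) and the lower growth $m_\mu(y,0) \geq l_\mu(|y|-1)$, the minima of both $m_\mu(\cdot,0)$ and $M_\mu(\cdot)$ over $H_t$ are attained within $H_t \cap B_R$ for $R := 1 + L_\mu t/l_\mu \leq Ct/\mu$, and replacing $H_t$ by the lattice set $\widehat{H}_t \cap B_R$ introduces at most an $O(1)$ deterministic error. The cardinality of this lattice set is bounded by $C(t/\mu)^{d-1}$.

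Next I set $Y := \max_{y \in \widehat{H}_t \cap B_R} (M_\mu(y) - m_\mu(y,0))_+$. Comparing the minimizers of $M_\mu$ and of $m_\mu(\cdot,0)$ over $\widehat{H}_t \cap B_R$ gives $M_\mu(H_t) - m_\mu(H_t,0) \leq Y + C$ pointwise, whence $M_\mu(H_t) - \E[m_\mu(H_t,0)] \leq \E[Y] + C$. To estimate $\E[Y]$, I apply Proposition~\ref{p.fluc} at each $y$ with $|y|\geq C\mu^{-2}$; since $|y|\leq R\leq Ct/\mu$, for $\lambda \geq C$,
\begin{equation*}
\P[M_\mu(y) - m_\mu(y,0) > \lambda] \leq \exp\!\left(-\frac{\mu^4\lambda^2}{C|y|}\right) \leq \exp\!\left(-\frac{\mu^5\lambda^2}{Ct}\right).
\end{equation*}
A union bound over the $C(t/\mu)^{d-1}$ lattice points yields $\P[Y>\lambda] \leq C(t/\mu)^{d-1}\exp(-\mu^5\lambda^2/(Ct))$. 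The remaining lattice points with $|y|< C\mu^{-2}$ contribute only $O(\mu^{-2})$ deterministically, via $|M_\mu(y) - m_\mu(y,0)| \leq 2L_\mu|y|$, which is dominated by $(t/\mu^5)^{1/2}$ for $t\geq 1$ and $\mu \leq \mu_0$.

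Finally, I would choose $\lambda_0 := C((t/\mu^5)\log(2+t/\mu))^{1/2}$ with $C$ large enough so that the prefactor $(t/\mu)^{d-1}$ is overwhelmed by the exponential at $s=\lambda_0$, and conclude
\begin{equation*}
\E[Y] \leq \lambda_0 + \int_{\lambda_0}^\infty \P[Y>s]\,ds \leq C\left(\frac{t}{\mu^5}\log\!\Big(2+\frac{t}{\mu}\Big)\right)^{\!1/2},
\end{equation*}
which delivers~\eqref{wrestle}. The argument is essentially a concentration-of-minima computation made quantitative by Proposition~\ref{p.fluc}; the only mildly delicate point is the restriction $|y|\geq C\mu^{-2}$ in that proposition, but as noted this is absorbed straightforwardly into the target error, so I do not anticipate a serious obstacle.
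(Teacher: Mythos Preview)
Your proposal is correct and follows essentially the same approach as the paper: discretize $H_t$ to $\widehat H_t \cap B_R$ with $R\sim L_\mu t/l_\mu$, use a union bound together with Proposition~\ref{p.fluc} to control the tail of the maximum one-sided fluctuation $\max_y (M_\mu(y)-m_\mu(y,0))$, and integrate the resulting sub-Gaussian tail with a cutoff at $\lambda_0\sim (t\mu^{-5}\log(2+t/\mu))^{1/2}$. The only cosmetic difference is that the paper works directly with $\P[M_\mu(H_t)-m_\mu(H_t,0)\geq\lambda]$ and handles the restriction $|y|\geq C\mu^{-2}$ by treating the regime $t<C\mu^{-2}$ separately via the trivial bound $M_\mu(H_t)-\E[m_\mu(H_t,0)]\leq Ct\leq C(t/\mu^2)^{1/2}$, rather than splitting lattice points; since every $y\in H_t$ has $|y|\geq t$, these two treatments are equivalent.
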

\begin{proof}
Let $R:= (L_\mu/l_\mu)t$. We may choose $\hat z \in \widehat H_t \cap B_R$ such that
\begin{equation*}\label{}
m_\mu(\hat z,0 ) \leq m_\mu(H_t,0 ) + L_\mu (\d-1)^{\frac12}. 
\end{equation*}
For every $z\in H_t$ we have $\E \left[ m_\mu(z,0 ) \right] =M_\mu(z) \geq M_\mu(H_t)$ and thus, for every $\lambda >0$,
\begin{multline}\label{}
\left\{ \omega\in \Omega\,:\, M_\mu(H_t) - m_\mu(H_t,0 ,\omega) \geq \lambda + L_\mu (\d-1)^{\frac12} \right\} \\
\subseteq \bigcup_{z\in \hat H_t\cap B_R} \left\{ \omega\in \Omega\,:\, m_\mu(z,0,\omega ) \leq M_\mu(z) - \lambda \right\}.
\end{multline}
Assuming $t\geq C\mu^{-2}$ and applying~\eqref{e.fluc}, we find
\begin{align}\label{probbnd}
\lefteqn{\Prob\left[ M_\mu(H_t) - m_\mu(H_t,0 )  \geq \lambda + L_\mu(\d-1)^{\frac12} \right]} \qquad \qquad  & \\
& \leq C R^{\d-1} \max_{z\in H_t} \Prob\left[ m_\mu(z,0 ) - M_\mu(z) \leq -\lambda \right] \nonumber \\
& \leq CR^{\d-1} \exp\left( -\frac{\mu^4\lambda^2}{CR} \right) \leq C \mu^{1-\d} t^{\d-1}\exp\left( -\frac{\mu^5\lambda^2}{Ct} \right).\nonumber
\end{align}
We next estimate the right side of the inequality
\begin{equation}\label{spread}
M_\mu(H_t) - \E\left[ m_\mu(H_t,0 ) \right]  \leq \int_0^\infty \Prob\left[ M_\mu(H_t) - m_\mu(H_t,0 ) \geq \lambda  \right]\, d\lambda.
\end{equation}
Fix $\beta\geq 1$, to be selected below, define 
\begin{equation*}\label{}
\lambda_1:= \left( \frac{\beta t}{\mu^5} \log\left( 2  +\frac t\mu\right) \right)^{\frac12}
\end{equation*}
and then estimate the right side of~\eqref{spread} by 
\begin{align*}
\lefteqn{ \int_0^\infty \Prob\left[ M_\mu(H_t) - m_\mu(H_t,0 ) \geq \lambda  \right]\, d\lambda } \qquad  \\ 
& \leq  \lambda_1 + L_\mu(\d-1)^{\frac12} + \int_{\lambda_1}^\infty \Prob\left[ M_\mu(H_t) - m_\mu(H_t,0 ) \geq \lambda + L_\mu (\d-1)^{\frac12} \right]\, d\lambda \\
& \leq \lambda_1 + L_\mu(\d-1)^{\frac12} + C\mu^{1-\d}t^{\d-1}\int_{\lambda_1}^\infty \exp\left(-\frac{\mu^5\lambda^2}{Ct}\right) \, d\lambda.
\end{align*}
Observe that
\begin{multline*}
\mu^{1-\d}t^{\d-1}\int_{\lambda_1}^\infty \exp\left(-\frac{\mu^5\lambda^2}{Ct}\right) \, d\lambda \leq \mu^{1-\d}t^{\d-1} \int_{\lambda_1}^\infty \exp\left( - \frac{\mu^5 \lambda_1\lambda}{Ct} \right) \, d\lambda \\
= C \mu^{1-\d}t^{\d-1} \frac{t}{\mu^5\lambda_1} \exp\left( -\frac{\mu^5\lambda_1^2}{Ct} \right) \leq  C\left( \frac t\mu\right)^{\d+4} \left(1+\frac t\mu\right)^{-\beta/C}.
\end{multline*}
Taking $\beta\geq C$, the last expression on the right is bounded by $C$. The previous two sets of inequalities and~\eqref{spread} yield 
\begin{equation}\label{}
M_\mu(H_t) - \E\left[ m_\mu(H_t,0 ) \right]  \leq \lambda_1 + L_\mu(\d-1)^\frac12 + C \leq \lambda_1+ C,
\end{equation}
which gives, for every $t\geq C\mu^{-2}$.
$$
 M_\mu(H_t) \leq \E \left[ m_\mu(H_t,0 ) \right] +C\left( \frac{t}{\mu^5}\log \left(2+\frac t\mu \right)\right)^\frac12.
$$
If $t< C\mu^{-2}$, then we obtain that 
$$
M_\mu(H_t) - \E \left[ m_\mu(H_t,0 ) \right] \leq Ct \leq C\left(  \frac{t} {\mu^2}\right)^{1/2}. 
$$
Hence \eqref{wrestle} also holds in this case. 
\end{proof}

Lemmas~\ref{EHtrate} and~\ref{cmte-minE} give an estimate on the difference of $M_\mu(H_t)$ and $\overline m_\mu(H_t)$.
\begin{cor}
There exists $C> 0$ such that, for every $t\geq 1$,
\begin{equation}\label{MHtrate}
M_\mu(H_t) \leq \overline m_\mu(H_t) +C\left(\frac t {\mu^5}\right)^{\frac12} \log \left(2+\frac t\mu\right).
\end{equation}
\end{cor}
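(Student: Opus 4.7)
The proof is essentially immediate once Lemmas~\ref{EHtrate} and~\ref{cmte-minE} are in hand, since both statements bound comparable quantities by error terms of the same order. The plan is to chain the two estimates through the intermediate quantity $\E[m_\mu(H_t,0)]$ and then verify that the two resulting error terms can be combined into a single term of the asserted form.

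More precisely, I would first apply Lemma~\ref{cmte-minE} to write
\begin{equation*}
M_\mu(H_t) \leq \E\left[ m_\mu(H_t,0) \right] + C\left( \frac{t}{\mu^5}\log\left(2+\frac{t}{\mu}\right)\right)^{1/2},
\end{equation*}
and then use Lemma~\ref{EHtrate} to bound the right-hand side by
\begin{equation*}
\overline m_\mu(H_t) + C\left( \frac{t}{\mu^5}\right)^{1/2}\log\left(2+\frac{t}{\mu}\right) + C\left( \frac{t}{\mu^5}\log\left(2+\frac{t}{\mu}\right)\right)^{1/2}.
\end{equation*}

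It remains to absorb the square-root term into the logarithmic one. For $t \geq 1$ and $\mu\leq \mu_0$, one has $\log(2+t/\mu) \geq \log 2 > 0$, so the elementary inequality $x^{1/2} \leq x^{1/2} \cdot \log(2+t/\mu) / (\log 2)^{1/2}$ applied with $x = (t/\mu^5)\log(2+t/\mu)$ gives
\begin{equation*}
\left( \frac{t}{\mu^5}\log\left(2+\frac{t}{\mu}\right)\right)^{1/2} \leq C \left( \frac{t}{\mu^5}\right)^{1/2}\log\left(2+\frac{t}{\mu}\right),
\end{equation*}
which yields the desired bound. There is no real obstacle here: the result is a direct corollary, and the only minor verification is the absorption of the lower-order error term, which is routine. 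The substantive work has already been carried out in the preceding two lemmas — the superadditivity estimate for $g_{\mu,\sigma}$ combined with the Hammersley–Fekete lemma in Lemma~\ref{EHtrate}, and the fluctuation estimate~\eqref{e.fluc} together with a union bound over $\widehat H_t\cap B_R$ in Lemma~\ref{cmte-minE}.
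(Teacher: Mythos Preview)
Your proposal is correct and follows exactly the approach indicated in the paper, which simply states that the corollary follows from Lemmas~\ref{EHtrate} and~\ref{cmte-minE} without spelling out the chaining. Your absorption step is valid (since $\log(2+t/\mu)\ge \log 2$ implies $(\log(2+t/\mu))^{1/2}\le C\log(2+t/\mu)$), though the intermediate inequality you wrote is phrased a bit awkwardly; the cleaner way is to note directly that $\sqrt{\ell}\le (\log 2)^{-1/2}\,\ell$ for $\ell\ge\log 2$.
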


The previous corollary  yields a rate of convergence for $M_\mu(y)$ to $\overline m_\mu(y)$. 

\begin{proof}[{Proof of Proposition~\ref{p.bias}}] 
The first step is to show that, for every $z\in \Rd$ with $|z|>1$,
\begin{equation}\label{zinCH} 
z \in \conv\left\{ y\in \Rd \,:\, M_\mu(y) \leq \overline m_\mu(z) + C'\left(\frac{|z|}{\mu^5}\right)^{\frac12}\log \left(2+\frac{|z|}{\mu}\right)\right\},
\end{equation}
where $C'>0$ is the constant $C$ in~\eqref{MHtrate}. Suppose on the contrary that ~\eqref{zinCH} fails for some $z\in \Rd$ with $t:=|z|>1$. By elementary convex separation, there exists a plane $H$  with $z\in H$ such that 
\begin{equation*}\label{}
M_\mu(H) > \overline m_\mu(z) + A \quad \mbox{where} \quad A:=  C' \left(\frac t{\mu^5}\right)^{\frac12} \log \left(2+\frac t\mu\right).
\end{equation*}
Since $H$ is at most a distance of $|z|=t$ from the origin, we may assume with no loss of generality that $H=H_s$ for some $s\leq t$. We deduce that
\begin{equation*}\label{}
M_\mu(H_s) > \overline m_\mu(z) + A  \geq \overline m_\mu(H_s) + C'\left(\frac s{\mu^5}\right)^{\frac12} \log \left(2+\frac s\mu\right),
\end{equation*}
a contradiction to~\eqref{MHtrate}. Thus~\eqref{zinCH} holds. 

Next we recall that, according to~\cite[Lemma 4.9]{ACS}, there exists $C''> 0$ such that, for every $N\in \N$ and $\alpha > 0$,
\begin{equation}\label{convhull}
 \conv \left\{ y\in \Rd \, : \, M_\mu(y) \leq \alpha \right\} \subseteq \left\{ y \in \Rd \,: \, M_\mu(Ny) \leq (N+C''/\mu)\alpha \right\}.
\end{equation}
Fix $y\in \Rd$ with $|y|> 1 $ and apply~\eqref{zinCH} to $z:=y/N$, with $N \in\N^*$ to be chosen below, to conclude that 
\begin{equation*}\label{}
y/N \in \conv\left\{ x\in \Rd \,:\, M_\mu(x) \leq \overline m_\mu(y/N) + C'\left( \frac{|y|}{N\mu^5} \right)^{\frac12}\log\left(2+ \frac{|y|}{N\mu}\right)\right\}
\end{equation*}
and, after an application of~\eqref{convhull},
\begin{align*}\label{}
M_\mu(y) & \leq  \left( N + \frac{C''}{\mu} \right) \left( \overline m_\mu(y/N) + C' \left(\frac{|y|}{N\mu^5}\right)^{\frac12} \log\left(2+\frac{|y|}{N\mu}\right)\right) \\
& \leq \overline m_\mu(y) + C''L_\mu \frac{|y|}{N\mu} + 
C'   \left( \frac {N|y|}{\mu^5}\right)^\frac12\log\left(2+\frac{|y|}{N\mu}\right) \\
& \qquad + C'C''  \left( \frac{|y|}{N\mu^7}\right)^\frac12 \log\left(2+\frac{|y|}{N\mu}\right).
\end{align*}
We take $N$ be the smallest integer larger than $|y|^{\frac13}\mu$ to get 
\begin{align*}
M_\mu(y) & \leq \overline m_\mu(y) + C \frac{|y|^{\frac23}}{\mu^2} + 
C \frac{|y|^{\frac23}}{\mu^2}  \log\left(2+\frac{|y|^{\frac23}}{\mu^2}\right)+ C\frac{|y|^{\frac13}}{\mu^{4}}   \log\left(2+\frac{|y|^{\frac23}}{\mu^2}\right).
\end{align*}
This is~\eqref{meanseq}. 
\end{proof}

\section{Error estimates for the approximate cell problem}\label{s.cellpb}

In this section, we prove Theorem \ref{p.deltaGlobal}. As in~\cite{ACS}, the argument consists of controlling $\delta v^\delta(\cdot ;p)+\overline H(p)$ by the difference $m_\mu(\cdot, 0 )-\overline m_\mu(\cdot)$ and then applying Theorem~\ref{mpEE}. The idea of proving the convergence of $\delta v^\delta$ to $-\overline H(p)$ by comparing to the maximal subsolutions was introduced in~\cite{ASo3} and extended to the second-order case in~\cite{AT2}, although the importance of the metric problem goes back to Lions~\cite{Li}.

\smallskip

This comparison is somewhat simpler in the first-order case, because the function $-m_\mu(0, \cdot )$ can be used as a global subsolution. This is not true if $A\not\equiv 0$ and, as explained in~\cite{AT2}, one overcomes this difficulty by looking at directions $p$  which are \emph{exposed points} of sublevel-sets of the form $\{\overline H=\mu\}$ where $\mu>0$: indeed, for these directions, the convergence of $\delta v^\delta(\cdot ;p)$to $-\overline H(p)$ to $0$ can be proved by using the convergence of $m_\mu(\cdot, 0 )$ to $\overline m_\mu(\cdot)$, because the function $\overline m_\mu$ is differentiable (and hence relatively flat) along the ray that touches the plane $y\mapsto p\cdot y$. For the other directions $p$, there are two cases: either $\overline H(p)>0$, and $p$ can be written as a convex combination of exposed points of some level-set $\{\overline H=\mu\}$ where $\mu>0$ (this is  Straszewiscz's theorem): then one can translate the convergence for the exposed points to the convergence for $p$. In the case $\overline H(p)=0$, that is, $p$ belongs to the ``flat spot," there is no relation between the approximate corrector and the metric problem, and different technique must be applied. This is not a limitation of the method, but intrinsic to the problem: as shown in~\cite{ACS}, the limit~\eqref{e.cellhomog} cannot be quantified without further information about the law of $H$.

\smallskip

Our task in this section is to quantify the argument of~\cite{AT2}. For this purpose we must consider a stronger version of the notion of exposed points, the so-called points of \emph{$r$-strict convexity}. For these points there is a sharp control of the difference $\delta v^\delta(\cdot ;p) - \left(-\overline H(p)\right)$ by the difference $m_\mu(\cdot, 0 )-\overline m_\mu(\cdot)$: this yields a quantitative result, which can be translated into a rate for the directions $p$ such that $\overline H(p)>0$ by a {\it quantitative version} of  Straszewiscz's theorem.

To handle the flat spot $\{\overline H=0\}$, we use the additional assumption~\eqref{e.ConstH} which essentially rules out its existence. The condition implies in particular that the flat spot is~$\{0\}$, and we obtain a convergence rate for points near zero by a simple interpolation. There are other, weaker hypotheses one could use to obtain quantitative results on the flat spot. As this is not our primary focus, we leave this issue to the reader. 

\smallskip

Throughout this section, we fix $\xi\geq 1$, which will serve as an upper bound for $|p|$. The symbols $C$ and $c$ denote positive constants which may change in each occurrence and depend only on $(d,q,\Lambda,\xi)$.

We first recall the following standard estimates on the approximate corrector, which will be needed throughout the section (c.f.~\cite{AT2,AT} for the proofs): for every $p,\tilde p\in B_\xi$ and $y,z\in \Rd$,
\begin{equation}\label{dvdsup}
-C \leq -\Lambda |p|^q -\esssup_{ \Omega} H(p,0 ) \leq \delta v^\delta(y ,p) \leq -\essinf_{\Omega} H(p,0 ) \leq -\frac{1}{\Lambda}|p|^q + \Lambda \leq C,
\end{equation}
\begin{equation}\label{vdlip}
\left| v^\delta(y,p)  - v^\delta(z,p) \right| \leq C|y-z|,
\end{equation}
and
\begin{equation}\label{vdpdepp}
\big| \delta v^\delta(y ,p) - \delta v^\delta(y ,\tilde p)\big| \leq C\left|p-\tilde p \right|.
\end{equation}

We break the proof of Theorem~\ref{p.deltaGlobal} into several steps, beginning with a bound on $-\delta v^\delta(\cdot,p) - \barH(p)$ from above, for $p\in \Rd$ satisfying $\overline H(p)>0$. This is the main focus of this section and where our arguments are different from those of~\cite{ACS}.

\subsection{Estimate of $-\delta v^\delta - \barH$ from above, off the flat spot}

The first step in the proof of Theorem \ref{p.deltaGlobal}  is an  estimate from below of the quantity $\delta v^\delta(\cdot ;p)+\overline H(p)$.

\begin{prop}
\label{p.met-to-wep1} 
There exists $C>0$ such that, for every $|p|\leq \xi$ with $\overline H(p)>0$ and $\delta,\lambda>0$ which satisfy
\begin{equation}\label{condcondA}
\lambda \geq   C \left( \frac{\delta^{\frac17}}{\overline H(p)^{\frac{6}{7}}} + \frac{\delta^{\frac25}}{\overline H(p)^{\frac{12}{5}}} \right) \log\left(2 + \frac{1}{\delta \overline H(p)} \right),
\end{equation}
we have
\begin{equation} \label{e.metwepconc}
\Prob\Big[\, -\delta v^\delta(0 ,p) \geq \overline H(p) + \lambda \Big] 
\leq  C\lambda^{-3\d}\exp\left(-\frac{(\overline H(p))^4 \lambda^4}{ C  \delta} \right)
\end{equation}
\end{prop}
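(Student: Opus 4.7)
The plan is to bound $-\delta v^\delta(0,p)$ from above by $\overline H(p) + \lambda$ by constructing a \emph{subsolution} of the approximate cell problem~\eqref{e.cellp} out of the maximal subsolution $m_\mu(\cdot,0)$ of the metric problem, then invoking Theorem~\ref{mpEE} to compare $m_\mu$ with $\overline m_\mu$. Throughout, set $\mu := \overline H(p) > 0$. The characterization \eqref{e.Hbarmet} says that $\overline m_\mu(y) \geq p\cdot y$ for every $y\in\Rd$, with equality in at least one direction; the hypothesis $\overline H(p)>0$ ensures $0\notin\intr\{\overline H\leq\mu\}$, which is what makes this direction usable.

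First I would invoke a quantitative Straszewicz-type selection of an \emph{$r$-exposed} direction for the dual object of $p$ (following the convex-geometric framework of~\cite{AT2}): fix parameters $R\gg 1$ and $r\in(0,1)$ (both to be optimized at the end in terms of $\delta$ and $\mu$), and choose a unit vector $e_\ast$ together with a point $y_\ast = R e_\ast$ such that
\begin{equation*}
\overline m_\mu(y_\ast + z) \geq \overline m_\mu(y_\ast) + p\cdot z + r|z|^2/R \qquad \mbox{for all} \ |z|\leq R,
\end{equation*}
or a suitable variant adapted to the flat spot being trivial. Positive homogeneity of $\overline m_\mu$ gives $\overline m_\mu(y_\ast) = p\cdot y_\ast$ on the extremal ray, providing the anchor that ties the construction back to $p$.

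Next, I would construct a candidate subsolution $\phi$ of \eqref{e.cellp} from $m_\mu(y_\ast,\cdot)$ (or, since it is $m_\mu(\cdot, 0)$ that solves the equation in its first argument, from a reflected / translated version of it viewed along the ray $\R e_\ast$). The natural candidate is, roughly,
\begin{equation*}
\phi(y) := \tfrac{1}{R}\bigl( m_\mu(y_\ast, 0) - m_\mu(y_\ast, y)\bigr) + p \cdot y + C_\ast,
\end{equation*}
regularized by sup-convolution at a small scale to smooth the gradient and allow the convexity estimate on $\overline m_\mu$ to be transferred to a pointwise inequality. The PDE for $m_\mu$, combined with the convexity \eqref{e.Hconvex} and the structural assumption \eqref{e.imposition}, should yield that $\delta \phi - \tr(A D^2\phi) + H(D\phi + p, y) \leq 0$ up to an error that decomposes as:
\begin{equation*}
\text{err} \;\lesssim\; \underbrace{\tfrac{1}{R}\bigl|m_\mu(y_\ast,0)-\overline m_\mu(y_\ast)\bigr|}_{\text{random error}} \;+\; \underbrace{\tfrac{1}{rR}}_{\text{exposure}} \;+\; \underbrace{\delta R}_{\text{zero-order mismatch}} \;+\; \underbrace{r}_{\text{regularization}}.
\end{equation*}
Comparison principle (c.f.~\cite{AT}) then gives $\delta v^\delta(0,p) \geq -\mu - \text{err}$, i.e.\ $-\delta v^\delta(0,p) \leq \overline H(p) + \text{err}$.

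The final step is to apply Theorem~\ref{mpEE} to control $|m_\mu(y_\ast,0) - \overline m_\mu(y_\ast)|$ and to balance the four terms. The bias/fluctuation estimate with $|y|=R$ contributes $R^{2/3}\mu^{-2}\log(\cdots)$ to the random error after division by $R$, giving $R^{-1/3}\mu^{-2}\log(\cdots)$; balancing against $(rR)^{-1}$, $\delta R$ and $r$ and optimizing $R$ and $r$ as explicit powers of $\delta$ and $\mu$ produces the threshold $\lambda \gtrsim \delta^{1/7}\mu^{-6/7} + \delta^{2/5}\mu^{-12/5}$ in \eqref{condcondA}. The tail probability in \eqref{e.metwepconc} comes from plugging the chosen $\lambda' = \lambda R$ into \eqref{hard}, producing an exponent of $\mu^4 (\lambda R)^2 / R \sim \mu^4 \lambda^2 R$; after substituting the optimal $R$ as a power of $\lambda$ and $\mu$, this becomes $\mu^4\lambda^4/\delta$, while the polynomial prefactor $\lambda^{-3d}$ absorbs the volume factors from summing over possible $e_\ast$ on a discretized sphere.

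The main obstacle, and the step that distinguishes this argument from~\cite{ACS}, is the construction of $\phi$ in Step 2: in the first-order case $-m_\mu(0,\cdot)$ is itself a global subsolution, but with a nontrivial diffusion $A$ one must instead work on one side of a hyperplane, use the $r$-strict convexity of $\overline m_\mu$ to pay a small quadratic price, and carefully regularize so that the comparison produces an error that cleanly decomposes into the four terms above. Tracking the dependence of each term on $R$, $r$, $\delta$ and $\mu$ precisely enough to obtain the stated exponents $1/7$ and $2/5$ is the delicate part.
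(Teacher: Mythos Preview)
Your high-level plan (link $v^\delta$ to $m_\mu$ by comparison, use quantitative convex geometry for the direction $e$, apply Theorem~\ref{mpEE}, balance parameters) matches the paper, but two of the core technical ingredients are set up incorrectly and the argument would not go through as written.

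\textbf{The convex-geometric inequality is reversed.} You posit $\overline m_\mu(y_\ast+z)\geq \overline m_\mu(y_\ast)+p\cdot z + r|z|^2/R$, i.e.\ strict convexity of $\overline m_\mu$ from below. What is actually needed---and what the paper proves in Lemma~\ref{l.linkrstrict}---is the opposite: $r$-strict convexity of the sublevel set $\{\overline H\leq\mu\}$ at $p$ yields a direction $e$ where $\overline m_\mu$ is \emph{nearly linear}, namely $0\leq \overline m_\mu(e+x)-\overline m_\mu(e)-p\cdot x\leq r|x|^2$. Since $\overline m_\mu$ is the support function of $\{\overline H\leq\mu\}$, curvature of the set corresponds to smoothness (not strict convexity) of the support function. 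This near-linearity is what allows one to replace $p\cdot(\,\cdot\,)$ by $\overline m_\mu$ at a controlled quadratic cost; your inequality would not do this.

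\textbf{The candidate uses $m_\mu$ in the wrong argument.} Your $\phi$ is built from $y\mapsto m_\mu(y_\ast,y)$, the dependence on the \emph{second} variable, which satisfies no useful PDE---precisely the obstruction you yourself flag for $A\not\equiv 0$. The paper sidesteps this entirely by running the comparison the other way: it assumes the bad event $-\delta v^\delta(0,p)\geq \mu+\lambda$, perturbs $v^\delta$ into a strict supersolution $\hat w$ on a bounded domain $V$, and compares with $\hat m(y):=m_\mu(y,y_0-se)-m_\mu(y_0,y_0-se)-p\cdot(y-y_0)$, which uses $m_\mu$ in its \emph{first} argument and hence is a genuine subsolution in $V$. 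The comparison principle then forces a large deviation of $m_\mu(\cdot,y_0-se)$ from $\overline m_\mu$, to which Theorem~\ref{mpEE} applies. A final step (Carath\'eodory plus the quantitative Straszewicz Lemma~\ref{l.quanstrasz}) passes from points of $r$-strict convexity to general $p$; the choice $r\sim \lambda^{-1}$ (large, not small) is what produces the exponent $\mu^4\lambda^4/\delta$. Your four-term error budget, with $r$ treated as a small regularization scale, does not match this mechanism.
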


We begin by recalling some relevant definitions. For a compact subset  $K$ of $\Rd$, we say that $x\in K$ is an \emph{exposed point} of $K$ if there is a plane $P$ such that $P\cap K = \{ x \}$. If $r>0$, we say that $x\in K$ is a \emph{point of $r$-strict convexity of $K$} if there is a closed ball $B$ of radius $r$ such that $K\subseteq B$ and $x\in K \cap \partial B$. Note that
\begin{equation*}\label{}
\mbox{points of $r$-strict convexity} \ \subseteq \ \mbox{exposed points}\  \subseteq \ \mbox{extreme points.}
\end{equation*}

\begin{lem}[Quantitative version of Straszewicz's theorem]
\label{l.quanstrasz}
Suppose that $K$ is compact, convex and let $r> \diam(K)$. Denote by $K^r$ the intersection of all closed balls of radius $r$ which contain $K$, and let $K_r$ be the closed convex hull of the points of $r$-strict convexity of $K$. Then 
\begin{equation*}\label{}
\dist_H(K_r,K^r) \leq \frac{(\diam(K))^2}{r}. 
\end{equation*}
\end{lem}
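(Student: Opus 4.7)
Since $K_r \subseteq K \subseteq K^r$, the Hausdorff distance collapses to the one-sided quantity $\sup_{y \in K^r}\dist(y,K_r)$, which for compact convex sets equals $\sup_{v \in \partial B_1}\bigl(h_{K^r}(v) - h_{K_r}(v)\bigr)$, where $h$ denotes the support function. The plan is therefore to fix a unit vector $v$ and show
\begin{equation*}
h_{K^r}(v) - h_{K_r}(v) \;\le\; \frac{(\diam K)^2}{r}.
\end{equation*}

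\textbf{The key construction.} Let $\mathcal C := \{c \in \Rd : K \subseteq \overline B_r(c)\}$, which is convex, compact, and nonempty (any $c \in K$ works, because $r>\diam K$). Pick $c^* \in \mathcal C$ minimising $c \cdot v$. Since $K^r \subseteq \overline B_r(c^*)$ by the very definition of $K^r$, the easy upper bound $h_{K^r}(v) \le c^* \cdot v + r$ holds. For the lower bound on $h_{K_r}(v)$, I would invoke the first-order optimality condition at $c^*$: the normal cone to $\mathcal C$ at $c^*$, computed from the active ball constraints, is the conic hull of $\{(c^*-z)/r : z \in K \cap \partial \overline B_r(c^*)\}$, and optimality forces $-v$ to lie in it. Carath\'eodory for cones then yields finitely many $z_1,\dots,z_N \in K \cap \partial \overline B_r(c^*)$ and $\mu_i \ge 0$ with
\begin{equation*}
v \;=\; \sum_{i=1}^N \mu_i\,\frac{z_i-c^*}{r}.
\end{equation*}
By the very definition of $r$-strict convexity (with $\overline B_r(c^*)$ as the witnessing ball), each $z_i \in K_r$. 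Setting $s := \sum_i \mu_i$, rearrangement gives $c^* + rv/s = \sum_i (\mu_i/s)\,z_i$, a convex combination of points of $K_r$. Hence $c^* + rv/s \in K_r$ and $h_{K_r}(v) \ge c^* \cdot v + r/s$. Combining the two bounds,
\begin{equation*}
h_{K^r}(v) - h_{K_r}(v) \;\le\; r - \tfrac{r}{s} \;=\; r\,\tfrac{s-1}{s}.
\end{equation*}

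\textbf{Bounding $s$ --- the only real calculation, and the main obstacle.} Writing $u_i := (z_i-c^*)/r \in \partial B_1$, the diameter bound $|u_i - u_j| = |z_i - z_j|/r \le (\diam K)/r$ together with the expansion
\begin{equation*}
1 \;=\; |v|^2 \;=\; \Bigl|\sum_i \mu_i u_i\Bigr|^2 \;=\; s^2 - \tfrac12 \sum_{i,j}\mu_i\mu_j\,|u_i-u_j|^2 \;\ge\; s^2\Bigl(1 - \frac{(\diam K)^2}{2r^2}\Bigr)
\end{equation*}
(which is meaningful precisely because $r>\diam K$) yields $s \le \bigl(1 - (\diam K)^2/(2r^2)\bigr)^{-1/2}$. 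The elementary inequality $1 - \sqrt{1-\alpha} \le \alpha$ for $\alpha \in (0,1)$ then gives
\begin{equation*}
r\,\tfrac{s-1}{s} \;\le\; r\bigl(1 - \sqrt{1 - (\diam K)^2/(2r^2)}\bigr) \;\le\; \frac{(\diam K)^2}{2r} \;\le\; \frac{(\diam K)^2}{r},
\end{equation*}
which is the required estimate. The conceptual difficulty is not this final computation but the packaging step: choosing the right convex optimisation (minimise $c\cdot v$ over $\mathcal C$) and recognising that the first-order multipliers automatically assemble, at no additional cost, a convex combination of $r$-strictly convex points whose projection onto $v$ is quantifiably close to the extreme value $c^*\cdot v + r$. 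Once this identification is made, the control of $s$ is a Cauchy--Schwarz-style computation and the conclusion is immediate.
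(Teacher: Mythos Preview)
Your proof is correct and takes a genuinely different route from the paper's.

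The paper argues geometrically: after the reduction $(K_r)^r=K^r$ (which allows replacing $K_r$ by $K$), it picks $x\in K^r$ realizing the Hausdorff distance, lets $y\in K$ be its nearest point, and places a center $z$ on the ray from $x$ through $y$ at distance $\sqrt{r^2-s^2}$ from $y$ (with $s=\diam K$). The half-space supporting $K$ at $y$ together with the diameter bound forces $K\subseteq \overline B_r(z)$, hence $K^r\subseteq \overline B_r(z)$, and so $|x-y|\le r-\sqrt{r^2-s^2}\le s^2/r$.

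Your argument is convex-analytic and dual: you reduce to support functions, optimize the linear functional $c\mapsto c\cdot v$ over the set of admissible centers $\mathcal C=\bigcap_{z\in K}\overline B_r(z)$, and read off points of $r$-strict convexity directly from the KKT multipliers at the minimizer. (The normal-cone formula you use is justified by the subdifferential rule for $\phi(c)=\max_{z\in K}|c-z|$ together with Slater's condition, which holds because $r>\diam K$; you allude to this but do not spell it out.) What this buys: you never need the reduction $(K_r)^r=K^r$, you exhibit for each direction $v$ an explicit convex combination of $r$-strictly convex points whose support value is close to $h_{K^r}(v)$, and your computation yields the sharper constant $(\diam K)^2/(2r)$. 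The paper's approach, by contrast, is shorter and uses no optimization machinery---just a separating hyperplane and one well-chosen ball.
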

Lemma~\ref{l.quanstrasz} implies Straszewiscz's theorem since, for every $\diam(K) < r < \infty$,
\begin{equation*}\label{}
K_r\subseteq \conv \{ \mbox{exposed points of $K$} \} \subseteq K \subseteq K^r.
\end{equation*}
In order to put Lemma~\ref{l.quanstrasz} to good use, we need the following convex analytic lemma linking the points of $r$-strict convexity of the sublevel sets of $\barH$ to a quantitative estimate of linear approximation for the $\overline m_\mu$'s.

\begin{lem}
\label{l.linkrstrict}
Let $\mu, r > 0$ and $p\in \Rd$ be a point of $r$-strict convexity of $\{ q \in \Rd\,:\, \barH(q) \leq \mu \}$. Then there exists $e\in \Rd$ with $|e|=1$ such that $p=D\overline m_\mu(e)$ and, for every $x\in B_{1/2}$,
\begin{equation}\label{e.linkrstrict}
0\leq \overline m_\mu(e+x) - \overline m_\mu(e) - p\cdot x \leq r |x|^2 .
\end{equation}
\end{lem}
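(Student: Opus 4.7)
The approach will be via convex duality. The crucial observation is that, since $\overline m_\mu$ is positively homogeneous, convex and lower semicontinuous, the representation formula~\eqref{e.Hbarmet} identifies $\overline m_\mu$ as the support function of the sublevel set $K_\mu := \{q \in \Rd : \overline H(q) \leq \mu\}$. Indeed,~\eqref{e.Hbarmet} yields $K_\mu = \{q : q\cdot y \leq \overline m_\mu(y)\ \forall y\in \Rd\}$, and taking the support function of the right side recovers $\overline m_\mu$ by bidual duality. Thus
\begin{equation*}
\overline m_\mu(y) = \sup_{q \in K_\mu} q\cdot y.
\end{equation*}

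The plan is then to exploit the geometric meaning of $r$-strict convexity at $p$. By definition there exists $c\in \Rd$ with $K_\mu \subseteq \overline{B}_r(c)$ and $p \in K_\mu \cap \partial B_r(c)$; I will take $e := (p-c)/r$, which has $|e|=1$. Since the outer unit normal to $B_r(c)$ at $p$ is precisely $e$, the linear functional $q \mapsto q\cdot e$ attains its maximum on $\overline{B}_r(c)$ at $p$, with maximum value $c\cdot e + r = p\cdot e$. Combined with $p \in K_\mu$ and $K_\mu \subseteq \overline{B}_r(c)$, this forces $\overline m_\mu(e) = p\cdot e$, and more generally $\overline m_\mu(y) \leq c\cdot y + r|y|$ for every $y \in \Rd$.

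The lower bound in~\eqref{e.linkrstrict} is now immediate: since $p \in K_\mu$ I have $p\cdot y \leq \overline m_\mu(y)$ for every $y$, which together with $\overline m_\mu(e) = p\cdot e$ shows that $p$ is a convex subgradient of $\overline m_\mu$ at $e$, whence $\overline m_\mu(e+x) \geq \overline m_\mu(e) + p\cdot x$. For the upper bound, I will start from $\overline m_\mu(e+x) \leq c\cdot(e+x) + r|e+x|$ and subtract $\overline m_\mu(e) + p\cdot x = c\cdot(e+x) + r + re\cdot x$ to reduce matters to the elementary estimate $|e+x| - 1 - e\cdot x \leq \tfrac12 |x|^2$ on $B_{1/2}$. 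This in turn follows from $\sqrt{1+s}\leq 1+s/2$ applied to $s := 2e\cdot x + |x|^2 \geq -1$ (which holds for $|x|\leq 1/2$), and gives the stronger bound $r|x|^2/2$, well within the claimed $r|x|^2$. The differentiability $p = D\overline m_\mu(e)$ follows at once from the two-sided $O(|x|^2)$ estimate.

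There is no genuine analytic obstacle here; the argument is essentially a one-page convex-geometry exercise, and the only step requiring care is to record cleanly that the representation~\eqref{e.Hbarmet}, combined with positive homogeneity, makes $\overline m_\mu$ precisely the support function of $K_\mu$, so that information about the curvature of $\partial K_\mu$ at $p$ translates directly into information about the smoothness of $\overline m_\mu$ at the dual direction $e$.
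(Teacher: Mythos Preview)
Your proof is correct and follows essentially the same route as the paper's: both identify $\overline m_\mu$ as the support function of $K_\mu$, sandwich it between the linear form $p\cdot x$ and the support function $\phi$ of the enclosing ball $\overline B_r(c)$, and then use the quadratic Taylor behavior of $\phi$ at the outer normal direction $e$ to obtain the upper bound. Your explicit inequality $|e+x|-1-e\cdot x\leq \tfrac12|x|^2$ is exactly what the paper summarizes by the remark that $|D^2\phi(e)|=r$.
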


We first present the proof of Proposition~\ref{p.met-to-wep1}. The proofs of Lemmas~\ref{l.quanstrasz} and~\ref{l.linkrstrict} are postponed until the end of the subsection.

\begin{proof}[{Proof of Proposition~\ref{p.met-to-wep1}}]
We break the proof into five steps. In the first four steps, we prove the proposition under the assumption that $p$ is a point of $r$-strict convexity of a sublevel set of $\overline H$. In the final step, we remove the condition on $p$ by using the convexity of $\overline H$ and  simple convex geometry facts (Carath\'eodory's Theorem and Lemma~\ref{l.quanstrasz}).

\smallskip

In each of steps~1, 2, and 3, we fix $\lambda,\delta, \mu > 0$, a point $p\in\Rd$ of $r$-strict convexity of the convex set $\{ q\in \Rd\,:\, \overline H(q) \leq \mu\}$ with $|p| \leq \xi$ and coefficients~$\omega=(\Sigma,H)\in \Omega$ belonging to the event that~$-\delta v^\delta (0 ,p) \geq \overline H(p) + \lambda$. We note that $\mu = \overline H(p)$ and, in particular, $0< \mu \leq \Lambda |p|^q \leq \Lambda \xi^q \leq C$.

\smallskip

{\it Step 1.} We modify $v^\delta(\cdot,p)$ to prepare it for comparison. With $0<c<1$ selected below, define
\begin{equation*}\label{}
w(y):= v^\delta(y ,p) - v^\delta(0 ,p) + c\lambda \left( \left( 1+ |y|^2 \right)^{1/2} - 1\right).
\end{equation*}
Observe that $w$ satisfies
\begin{equation*}\label{}
-\tr\left(A(y ) D^2w \right) + H(p+Dw,y ) \geq - \delta v^\delta(y ,p) - \frac14\lambda \quad \mbox{in} \ \Rd,
\end{equation*}
provided that $c>0$ is chosen sufficiently small, depending on $(q,\Lambda,\xi)$. Select $y_0 \in \Rd$ such that
\begin{equation*}\label{}
w(y_0) = \inf_{\Rd} w \leq w(0) = 0
\end{equation*}
and observe that $|y_0| \leq C / \lambda \delta$. Next we define
\begin{equation*}\label{}
\hat w(y) := w(y) - w(y_0) + \hat c\lambda \left( \left( 1+ |y-y_0|^2 \right)^{1/2} - 1\right)
\end{equation*}
and observe, for $\hat c>0$ chosen sufficiently small, depending on $(q,\Lambda,\xi)$,~$\hat w$ satisfies
\begin{equation*}\label{}
-\tr\left(A(y ) D^2\hat w \right) + H(p+D\hat w,y ) \geq - \delta v^\delta(y ,p) - \frac12\lambda \quad \mbox{in} \ \Rd.
\end{equation*}
Consider the domain
\begin{equation*}\label{}
V:= \left\{ y\in \Rd \,:\, \hat w(y) < \lambda / 4\delta \right\}.
\end{equation*}
It is clear from the definition of $y_0$ and $\hat w$ that 
\begin{equation*}\label{}
V \subseteq B(y_0,R/\delta),
\end{equation*}
for some $R> 0$, depending only on $(q,\Lambda,\xi)$. Moreover, it is simple to check that
\begin{equation*}\label{}
\hat w(y) \leq \lambda/4\delta \qquad \mbox{implies that} \qquad -\delta v^\delta (y ,p) \geq -\delta v^\delta (0 ,p) - \frac\lambda4 \geq \mu + \frac{\lambda}{4}. 
\end{equation*}
Hence we obtain
\begin{equation}\label{e.hatwforcomp1}
\left\{ \begin{aligned}
& -\tr\left(A(y ) D^2\hat w \right) + H(p+D\hat w,y ) \geq \mu +  \frac14\lambda &  \mbox{in} & \ V, \\
& \hat w(y_0) = 0\quad \mbox{and}  \quad \hat w \equiv \lambda/4\delta \quad \mbox{on} \ \partial V.
\end{aligned} \right.
\end{equation}

\smallskip

{\it Step 2.} We modify $m_\mu$ to prepare it for comparison. According to Lemma~\ref{l.linkrstrict}, there exists a unit vector~$e$ such that $p = D\overline m_\mu(e)$ and, for every $x\in \Rd$ with $|x|\leq 1/2$,
\begin{equation}\label{e.donotbend}
\left| \overline m_\mu(e+x) - p\cdot (e+x) \right| \leq r |x|^2.
\end{equation}
With $s>R\delta^{-1}+1$ to be selected below, define
\begin{equation*}\label{}
\hat m(y):= m_\mu(y,y_0-se ) - m_\mu(y_0,y_0-se ) - p\cdot (y-y_0)
\end{equation*}
and observe that $\hat m$ satisfies $\hat m(y_0) = 0$ and
\begin{equation}\label{e.hatmforcomp1}
-\tr\left(A(y ) D^2\hat m \right) + H(p+D\hat m,y ) = \mu \quad \mbox{in} \ \Rd \setminus B(y_0-se,1) \supseteq V.
\end{equation}

{\it Step 3.} We apply the comparison principle (see~\cite[Theorem 2.1]{AT}). Using~\eqref{e.hatwforcomp1} and~\eqref{e.hatmforcomp1}, we obtain
\begin{equation*}\label{}
0 = \hat m(y_0) - \hat w(y_0) \leq  \sup_{V} \left( \hat m - \hat w\right) = \max_{y\in \partial V} \left(  \hat m - \hat w\right) = \max_{y\in \partial V} \hat m - \lambda/4\delta. 
\end{equation*}
A rearrangement yields 
\begin{equation*}\label{}
 \frac\lambda{4\delta} \leq \max_{y\in \partial V} \left(m_\mu(y,y_0-se ) - m_\mu(y_0,y_0-se ) - p\cdot (y-y_0) \right).
\end{equation*}
Using that $\partial V \subseteq B(y_0,R/\delta)$, we deduce that, for some $|y_0| \leq C/\lambda \delta$, either 
\begin{equation}\label{e.firstalt1}
\max_{y\in B(y_0,R/\delta)} \left(  m_\mu(y,y_0-se ) - p\cdot(y-(y_0-se)) \right) \geq \frac\lambda{8\delta}
\end{equation}
or else
\begin{equation}\label{e.secondalt1}
m_\mu(y_0,y_0-se ) - \overline m_\mu(se) = m_\mu(y_0,y_0-se ) - p\cdot (se) \leq - \frac\lambda{8\delta}.
\end{equation}
From~\eqref{e.donotbend} and positive homogeneity of $\bar m_\mu$, we have, for every $y\in  B(y_0,R/\delta)$ and assuming $R/(s\delta)\leq 1/2$, that
\begin{equation*}\label{}
\left| \overline m_\mu(se - y_0+y) - p\cdot (se - y_0+y) \right| \leq r s^{-1} |y_0-y|^2 \leq r s^{-1} R^2\delta^{-2}.
\end{equation*}
Hence the first alternative~\eqref{e.firstalt1} implies that, for $s\geq  16rR^2/(\delta\lambda)$,
\begin{equation}\label{e.firstalt1.b}
\max_{y\in B(y_0,R/\delta)} \left(  m_\mu(y,y_0-se ) - \overline m_\mu(se - y_0+ y)) \right) \geq \frac\lambda{8\delta} -  r s^{-1} R^2\delta^{-2} \geq \frac{\lambda}{16\delta}.
\end{equation}
Let us choose  $s:= 16rR^2/(\delta\lambda)$ (so that $R/(s\delta)= \lambda/ (16rR)\leq 1/2$ for $r$ large enough).

\smallskip

{\it Step 4.} We apply the results of~Theorem~\ref{mpEE}. To summarize the previous steps, we have shown that, if $-\delta v^\delta (0 ,p) \geq \overline H(p) + \lambda$ holds, then there exists $|y_0| \leq C/\lambda \delta$ such that at least one of~\eqref{e.secondalt1} and~\eqref{e.firstalt1.b} must hold. This is a purely deterministic statement relating the convergence of~\eqref{e.methomog} and~\eqref{e.cellhomog}. Using this and the Lipschitz continuity of $m_\mu$ and $\bar m_\mu$, we deduce that
\begin{multline*} 
\Prob\Big[\, -\delta v^\delta(0 ,p) \geq \overline H(p) + \lambda\Big]  \\
\leq \sum_{y_0\in B(C/\lambda \delta)\cap (c\lambda/\delta) \Z^{\d}} \Bigg( \Prob\bigg[\, m_\mu(y_0,y_0-se ) - \overline m_\mu(se)  \leq - \frac\lambda{16\delta}\bigg]  \\
+ \Prob\bigg[\, \max_{y\in B(y_0,R/\delta)} \left(  m_\mu(y,y_0-se ) - \overline m_\mu(se  - y_0+ y) \right) \geq \frac\lambda{32\delta}\bigg]\Bigg)
\end{multline*}
To estimate the first term in the sum, we apply~\eqref{easy} to find that, if $\lambda\geq C\delta\mu^{-2}$, then
\begin{multline*} 
\Prob\bigg[\, m_\mu(y_0,y_0-se ) - \overline m_\mu(se)  \leq - \frac\lambda{16\delta}\bigg] = 
\Prob\bigg[\, m_\mu(se,0 ) - \overline m_\mu(se) \leq - \frac\lambda{16\delta} \bigg]\\
 \leq \exp\left(-\frac{\mu^4 \lambda^2}{C s\delta^2} \right)= \exp\left(-\frac{\mu^4 \lambda^3}{C r\delta} \right). 
\end{multline*}
To bound the second term in the sum, we first use stationarity, the Lipschitz estimate for $m_\mu$ and $\overline m_\mu$ and the fact that the set $B(0,R/\delta)\cap(c\lambda/\delta)\Z^{\d}$ has at most $C\lambda^{-d}$ elements to obtain
\begin{align*}
\lefteqn{ \Prob\bigg[\, \max_{y\in B(y_0,R/\delta)} \left(  m_\mu(y,y_0-se ) - \overline m_\mu(se  - y_0+ y) \right) \geq \frac\lambda{32\delta}\bigg] } \qquad \qquad & \\
& = \Prob\bigg[\, \max_{z\in B(0,R/\delta)} \left(  m_\mu(z+se,0 ) - \overline m_\mu(se + z) \right) \geq \frac\lambda{32\delta}\bigg] \\
& \leq \sum_{z\in B(0,R/\delta)\cap(c\lambda/\delta)\Z^{\d}} \Prob\bigg[  m_\mu(se+z,0 ) - \overline m_\mu(se + z)  \geq \frac\lambda{64\delta}\bigg] \\ 
& \leq C\lambda^{-\d}\max_{z\in B(0,R/\delta)\cap(c\lambda/\delta)\Z^{\d}}  \Prob\bigg[  m_\mu(se+z,0 ) - \overline m_\mu(se+z )  \geq \frac\lambda{64\delta}\bigg].
\end{align*}
As $s= 16rR^2/(\delta\lambda)$, we apply the second statement of~Theorem~\ref{mpEE} to deduce that, if~$\lambda$ satisfies
$$
\frac{\lambda}{\delta} \geq  C\left( \frac{s^{\frac23}}{\mu^{2}} + \frac{s^\frac13}{\mu^4} \right) \log\left(2+\frac{s}{\mu}\right)
= C \left(\frac{r^{\frac23}}{(\delta\lambda)^{\frac23}\mu^{2}} + \frac{r^\frac13}{(\delta\lambda)^{\frac13}\mu^4} \right)  \log\left(2+\frac{r}{\delta\lambda\mu}\right),
$$
then we have, by \eqref{hard},
\begin{align*}
\lefteqn{\Prob\bigg[\, \max_{y\in B(y_0,R/\delta)} \left(  m_\mu(y,y_0-se ) - \overline m_\mu(se  - y_0+ y) \right) \geq \frac\lambda{32\delta}\bigg] } \qquad \qquad & \\
& \leq C\lambda^{-\d}\max_{z\in B(R/\delta)\cap(c\lambda/\delta)\Z^{\d}}\Prob\bigg[  m_\mu(se+z,0 ) - \overline m_\mu(se +z)  \geq \frac\lambda{64\delta}\bigg] \\
& \leq C\lambda^{-\d}\exp\left(-\frac{\mu^4 \lambda^3}{C r\delta} \right).
\end{align*}
We have proved that, if $p$ is a point of $r$-strict convexity of~$\{ q\in \Rd\,:\, \overline H(q) \leq \mu\}$ with $r\ge C$ and $\lambda$ satisfies
\begin{equation}\label{condlambda}
\frac{\lambda}{\delta} \geq   C \left(\frac{r^{\frac23}}{(\delta\lambda)^{\frac23}\mu^{2}} + \frac{r^\frac13}{(\delta\lambda)^{\frac13}\mu^4} \right) \log\left(2+\frac{r}{\delta\lambda\mu}\right), 
\end{equation}
 then 
\begin{equation*}
\Prob\left[ -\delta v^\delta(0 ,p) \geq \overline H(p) + \lambda\right] \leq C\lambda^{-3\d}\exp\left(-\frac{\mu^4 \lambda^3}{C r \delta} \right).
\end{equation*}
Note that $\lambda \geq C\delta \mu^{-2}$ is redundant in view of~\eqref{condlambda}, and the latter condition holds provided that 
$$
\lambda \geq   C \left(\frac{r^{\frac25}\delta^{\frac15}}{\mu^{\frac65}} + \frac{r^{\frac14}\delta^{\frac12}}{\mu^{3}} \right) \left( 1+ |\log r|+ |\log \delta |+|\log \mu|\right). 
$$

\smallskip

{\it Step 5.} We use the convexity of $H$ to obtain an estimate for general $|p|\leq K$. Fix $p\in \Rd$ with $|p| \leq K$ and set $\mu:= \overline H(p)$. According to Carath\'eodory's Theorem (c.f.~\cite{Gb}) and Lemma~\ref{l.quanstrasz}, for each $r>0$, there exist points $p_1, \ldots,p_{\d+1}$ of $r$-strict convexity of $\{ q \in \Rd \,:\, \overline H(q) \leq \mu\}$ and $\theta_1,\ldots,\theta_{\d+1} \in [0,1]$ such that $\sum_{i=1}^{\d+1} \theta_i=1$ and
\begin{equation*}\label{}
\left| p - \sum_{i=1}^{\d+1} \theta_i p_i \right| \leq \frac{C}{r}.
\end{equation*}
We consider coefficients $\omega=(\Sigma,H)\in\Omega$ belonging to the event that, for every $i=1, \dots, \d+1$, 
$$
-\delta v^\delta(0 ,p_i) \leq \overline H(p_i) +\frac\lambda2,
$$
where $\overline H(p_i)\leq \mu=\overline H(p)$. 
Then, by the convexity of $\overline H$ and the Lipschitz continuity of the map $p\mapsto -\delta v^\delta(y ,p)$, we have
\begin{equation*}
-\delta v^\delta(0 ,p) \leq \sum_{i=1}^{\d+1} \theta_i \left(-\delta v^\delta(0 ,p_i)\right)+\frac{C}{r} 
\leq  \sum_{i=1}^{\d+1} \theta_i \overline H(p_i) +\frac\lambda2 +\frac{C}{r} \leq \overline H(p)+\frac\lambda2 +\frac{C}{r}.
\end{equation*}
Choosing $r$ such that $\displaystyle\frac{C}{r}=\frac{\lambda}{4}< \frac{\lambda}{2}$, we obtain that 
\begin{align*}\label{}
\left\{ \omega\in \Omega\,:\, -\delta v^\delta(0,p,\omega) \geq \overline H(p) + \lambda \right\} & \subseteq \bigcup_{i=1}^{\d+1} \left\{ \omega\in \Omega\,:\, -\delta v^\delta(0 ,p_i,\omega) \geq \overline H(p_i) + \frac\lambda2 \right\} .
\end{align*}
Then from step 4 we obtain 
\begin{align*}\label{}
\Prob\Big[ -\delta v^\delta(0 ,p) \geq \overline H(p) + \lambda \Big] & \leq \sum_{i=1}^{\d+1}\Prob\bigg[ -\delta v^\delta(0 ,p_i) \geq \overline H(p_i) + \frac\lambda2 \bigg] \\
& \leq C\lambda^{-3\d}\exp\left(-\frac{\mu^4 \lambda^3}{C r \delta} \right)= C\lambda^{-3\d}\exp\left(-\frac{\mu^4 \lambda^4}{C  \delta} \right),
\end{align*}
(which is~\eqref{e.metwepconc}) provided that
\begin{equation*}
\lambda \geq   C \left( \frac{\delta^{\frac15}}{\mu^{\frac65}\lambda^{\frac25}} + \frac{\delta^{\frac12}}{\mu^{3}\lambda^{\frac14}} \right) \left( 1+ |\log(\lambda)|+ |\log(\delta)|+|\log(\mu)|\right).
\end{equation*}
This condition holds if $\lambda$ and $\delta$ satisfy
\begin{equation*}
\lambda \geq   C \left( \frac{\delta^{\frac17}}{\mu^{\frac{6}{7}}} + \frac{\delta^{\frac25}}{\mu^{\frac{12}{5}}} \right) \log\left(2 + \frac{1}{\delta\mu} \right),
\end{equation*}
which is~\eqref{condcondA}.
\end{proof}

\begin{proof}[{Proof of Lemma~\ref{l.quanstrasz}}]
It is easy to see that $(K_r)^r = K^r$, and therefore we may assume that $K = K_r$. Let $x\in K_r$ and $y\in K$ such that $|x-y| = \dist_H(K,K^r)$. 

The hyperplane $P$ passing through $y$ which is perpendicular to $x-y$ separates $K$ from the point $x$. That is, $K$ is contained in the half-space $S$ whose boundary is $P$ and which does not contain $x$. In particular, if $s=\diam K$, then 
\begin{equation*}\label{}
K \subseteq S \cap\overline B(y,s).
\end{equation*}
Set
\begin{equation*}\label{}
z := y + (r^2-s^2)^{1/2} \frac{y-x}{|y-x|}
\end{equation*}
and observe that $K \subseteq S \cap\overline (y,s) \subseteq \overline B(z,r)$. It follows from the definition of $K^r$ that $K^r\subseteq \overline B(z,r)$. Hence $x\in B(z,r)$ and
\begin{equation*}\label{}
\dist_H(K,K^r) = |x-y| = |x-z| - |y-z| \leq r - \left( r^2-s^2 \right)^{1/2}.
\end{equation*}
The proof is complete, as $r - \left( r^2-s^2 \right)^{1/2} \leq \frac12s^2 r^{-1} + o(r^{-1})$ as $r\to \infty$.
\end{proof}

\begin{proof}[{Proof of Lemma~\ref{l.linkrstrict}}]
Write $K:=\{ q \in \Rd\,:\, \barH(q) \leq \mu \}$ and select a closed ball $B$ of radius $r$ such that $K \subseteq B$ and $p\in \partial B$. Let $e$ be the outward-pointing unit normal vector to $\partial B$ at $p$. Let $\phi(x):= \max_{y\in B} (x\cdot y)$ denote the support function of $B$, and observe that, for every $x\in \Rd$,
\begin{equation*}\label{}
p\cdot x \leq \overline m_\mu(x)  \leq \phi(x)
\end{equation*}
with equality holding at $x=e$, that is, $\phi(e) = p\cdot e$. Hence, for every $x\in \Rd$,
\begin{equation*}\label{}
p\cdot x = p\cdot(x+e) - p\cdot e \leq \overline m_\mu(x+e) - \overline m_\mu(e) \leq \phi(x+e) - \phi(e). 
\end{equation*}
Rearranging and using the fact that $|D^2\phi(e)| = r$, we obtain
\begin{equation*}\label{}
0 \leq  \overline m_\mu(x+e) - \overline m_\mu(e) - p\cdot x \leq \phi(x+e) - \phi(e) - p\cdot x \leq \frac12 r |x|^2.
\qedhere
\end{equation*}
\end{proof}

\subsection{Global estimate of $-\delta v^\delta - \barH$}

The estimate from below for $-\delta v^\delta - \barH$ away from the flat spot is nearly verbatim from the argument of~\cite[Theorem 2(ii)]{ACS}. We state the result without proof, refereeing to \cite{ACS} for details.

\begin{prop}
\label{p.deltabelowOFS} 
There is a constant $C>0$, depending only on $(d,q,\Lambda,\xi)$, such that, for every $p\in \Rd$ with $|p|\leq \xi$ and $\overline H(p)>0$, $\delta,\lambda>0$ with
\begin{equation}\label{condcondB}
\lambda \geq  C\left( \frac{\delta^{\frac13}}{(\overline H(p))^{2}} + \frac{\delta^{\frac23}}{(\overline H(p))^{4}} \right) \log \left(2 + \frac{1}{\delta \overline H(p)} \right),
\end{equation}
we have
\begin{equation*}
\Prob\Big[\, -\delta v^\delta(0 ,p) \leq \overline H(p) + \lambda \Big] 
\leq  C \delta^{-2\d}\lambda^{-\d}\exp\left(-\frac{(\overline H(p))^4 \lambda^2}{C\delta} \right)
\end{equation*}
\end{prop}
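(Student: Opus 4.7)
\medskip

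\noindent\textbf{Plan.} The strategy mirrors that of Proposition~\ref{p.met-to-wep1}, dualized to handle the opposite inequality (and, as the authors note, is the direct analogue of~\cite[Theorem~2(ii)]{ACS}). The goal is to reduce the event $\{\,-\delta v^\delta(0,p)\leq \overline H(p)-\lambda\,\}$ to an event of the form $\{m_{\tilde\mu}(y^*,0)-\overline m_{\tilde\mu}(y^*)\leq -\lambda^*\}$ for a well-chosen point $y^*$ and level $\tilde\mu<\mu$, to which estimate~\eqref{easy} of Theorem~\ref{mpEE} applies. The reason one expects the exponent $\exp(-\mu^4\lambda^2/(C\delta))$ is that the natural scale of $v^\delta$ is $|y|\sim 1/\delta$ (coming from the $L^\infty$ bound $|\delta v^\delta|\leq C$ in~\eqref{dvdsup}); applying~\eqref{easy} with $|y^*|\sim 1/\delta$ and deviation $\lambda^*\sim\lambda/\delta$ gives exactly $\exp\bigl(-\mu^4(\lambda/\delta)^2/(C/\delta)\bigr)=\exp(-\mu^4\lambda^2/(C\delta))$.

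\medskip

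\noindent\textbf{Deterministic reduction.} Set $\mu:=\overline H(p)$ and assume we are in the stated event. Using~\eqref{vdlip} together with the equation for $v^\delta$, for any $R>0$ and $y\in B_R$ one has $-\delta v^\delta(y,p)\leq \mu-\lambda+C\delta R$, so $\chi(y):=v^\delta(y,p)+p\cdot y$ is a subsolution of the $\tilde\mu$-metric equation with $\tilde\mu:=\mu-\lambda+C\delta R$ on $B_R$. After normalizing on $\overline B_1$ (the boundary correction is bounded by a constant thanks to~\eqref{vdlip} and~\eqref{dvdsup}), the maximality characterization~\eqref{e.mmu} of $m_{\tilde\mu}(\cdot,0)$ yields
\begin{equation*}
v^\delta(y,p)-v^\delta(0,p)+p\cdot y\leq m_{\tilde\mu}(y,0)+C\qquad\mbox{for every }y\in B_R,
\end{equation*}
which, using $|v^\delta(y,p)-v^\delta(0,p)|\leq C/\delta$ from~\eqref{dvdsup}, gives the pointwise lower bound $m_{\tilde\mu}(y,0)\geq p\cdot y-C/\delta$ on $B_R$. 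To obtain a useful comparison with $\overline m_{\tilde\mu}$, I would evaluate at $y^*=te$ where $e\in S^{d-1}$ is a unit direction realizing $\overline m_\mu(e)=p\cdot e$ (such a direction exists because $\mu=\overline H(p)$; see~\eqref{e.Hbarmet}). A convex-analytic lemma analogous to Lemma~\ref{l.linkrstrict}, combined with the strict coercivity~\eqref{e.ConstH} which forces a quantitative separation between the level sets $\{\overline H\leq\tilde\mu\}$ and $\{\overline H\leq\mu\}$, yields $\overline m_{\tilde\mu}(e)\leq p\cdot e-c(\mu-\tilde\mu)/\mu^{q-1}$. Rearranging,
\begin{equation*}
m_{\tilde\mu}(y^*,0)-\overline m_{\tilde\mu}(y^*)\geq ct(\mu-\tilde\mu)/\mu^{q-1}-C/\delta-C.
\end{equation*}

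\medskip

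\noindent\textbf{Probabilistic step and main obstacle.} Choosing $t\sim R\sim 1/\delta$ and $R$ small enough that $C\delta R\leq \lambda/2$ (so $\mu-\tilde\mu\geq\lambda/2$), the right-hand side is bounded below by a positive quantity of order $\lambda/(\mu^{q-1}\delta)$. Hence the event is contained in $\{m_{\tilde\mu}(y^*,0)-\overline m_{\tilde\mu}(y^*)\geq c\lambda/(\mu^{q-1}\delta)\}$ for some $y^*\in B_R$; applying~\eqref{easy} with a union bound over a discrete grid of candidate directions $e$ (accounting for the polynomial prefactor $\lambda^{-d}\delta^{-2d}$ via Lipschitz continuity of $m_{\tilde\mu}$ and $\overline m_{\tilde\mu}$) yields~\eqref{e.thmdeltaGlobal}. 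The growth condition~\eqref{condcondB} on $\lambda$ arises from the requirement in~\eqref{lamb-cond} applied at scale $|y^*|\sim 1/\delta$ and deviation $\lambda/\delta$, after balancing the two terms against $\mu$. The principal technical obstacle—absent in the first-order setting of~\cite{ACS}—is justifying the comparison in the unbounded region $\Rd\setminus\overline B_1$ in the presence of the second-order term; this requires a strict-perturbation argument in the spirit of Step~1 of the proof of Proposition~\ref{p.met-to-wep1} (replacing $+c\lambda((1+|y|^2)^{1/2}-1)$ by a suitable sign-flipped analogue so that $\hat w$ becomes a strict supersolution) and appealing to the viscous comparison principle of~\cite{AT}.
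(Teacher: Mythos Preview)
The paper itself does not prove this proposition; it simply declares that the argument is ``nearly verbatim'' from \cite[Theorem~2(ii)]{ACS} and refers the reader there. So there is no in-paper proof to compare against, but your sketch can still be assessed against what that argument actually is.

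Your proposal has a real inconsistency and a genuine gap. In the \textbf{Plan} you say you will reduce to a \emph{lower} deviation event $\{m_{\tilde\mu}(y^*,0)-\overline m_{\tilde\mu}(y^*)\leq -\lambda^*\}$ and apply~\eqref{easy}; but your \textbf{Deterministic reduction} actually produces an \emph{upper} deviation $\{m_{\tilde\mu}(y^*,0)-\overline m_{\tilde\mu}(y^*)\geq \text{positive}\}$, which would require~\eqref{hard}, not~\eqref{easy}. (In fact the shape of condition~\eqref{condcondB} is exactly what one gets from~\eqref{lamb-cond} at scale $|y|\sim 1/\delta$, so~\eqref{hard} is indeed involved.) More seriously, your upper-deviation route rests on the quantitative separation $\overline m_{\tilde\mu}(e)\leq p\cdot e - c(\mu-\tilde\mu)/\mu^{q-1}$, which you justify by invoking~\eqref{e.ConstH}. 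But Proposition~\ref{p.deltabelowOFS} does \emph{not} assume~\eqref{e.ConstH}; it only assumes $\overline H(p)>0$. Without~\eqref{e.ConstH} there is no a priori quantitative strict convexity of $\overline H$ near $p$, so this gap estimate is unproved, and your argument does not close.

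The ACS-style argument avoids this entirely by working with $m_\mu$ rather than $m_{\tilde\mu}$. One mirrors Steps~1--3 of the proof of Proposition~\ref{p.met-to-wep1} with the signs flipped: perturb $v^\delta$ so that $\hat w$ has a \emph{maximum} at some $y_0$ and is a \emph{subsolution} at level $\mu-\lambda/4$ on its superlevel set $V=\{\hat w>-\lambda/(4\delta)\}$; compare with $\hat m(y)=m_\mu(y,y_0-se)-m_\mu(y_0,y_0-se)-p\cdot(y-y_0)$, which is a \emph{supersolution} at that level. The comparison yields two alternatives, exactly as in~\eqref{e.firstalt1}--\eqref{e.secondalt1} but with inequalities reversed. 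The point is that now one only needs the \emph{trivial} facts $\overline m_\mu(y)\geq p\cdot y$ for all $y$ (since $\overline H(p)=\mu$) and $\overline m_\mu(se)=p\cdot se$ for \emph{one} direction $e$ (any outward normal to $\{\overline H\leq\mu\}$ at $p$). No $r$-strict convexity, no quantitative gap in $\mu$, and no Step~5 analogue are needed --- this is precisely why the exponent is $\lambda^2$ rather than $\lambda^4$ and why the proof is ``nearly verbatim'' rather than requiring the machinery of Section~5.1. Taking $s\sim 1/\delta$ and applying~\eqref{easy} and~\eqref{hard} to the two alternatives (the latter supplying condition~\eqref{condcondB}) gives the result.
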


We next deduce Theorem~\ref{p.deltaGlobal} from Propositions~\ref{p.met-to-wep1} and~\ref{p.deltabelowOFS}.

\begin{proof}[Proof of Theorem \ref{p.deltaGlobal}.]
The condition \eqref{e.ConstH} implies (c.f.~\cite{AT2,AT}) that, for every $p\in\Rd$,
\begin{equation}\label{ClPourBarH}
-\delta v^\delta(x ,p)\geq \frac{1}{\Lambda} |p|^q\quad \mbox{and} \quad \overline H(p) \geq \frac{1}{\Lambda}|p|^q.
\end{equation}
Since $\delta v^\delta(y,0)=\overline H(0)=0$,~\eqref{vdpdepp} yields that, for every $|p| \leq \xi$,
$$
\left|\delta v^\delta(0 ,p) + \overline H(p) \right|\leq C|p| \qquad \forall |p|\leq K.
$$
Hence to obtain~\eqref{e.thmdeltaGlobal}, we may suppose without loss of generality that $\lambda\leq C|p|\leq C$. If conditions~\eqref{condcondA} and~\eqref{condcondB} hold, then Propositions~\ref{p.met-to-wep1} and~\ref{p.deltabelowOFS}, in view of $\lambda\leq C|p| \leq C$ and~ \eqref{ClPourBarH}, yield
\begin{align*}
\Prob\Big[\, \left|\delta v^\delta(0 ,p) + \overline H(p) \right|\geq  \lambda \Big] 
& \leq  C\lambda^{-\d} \left(\lambda^{-2\d}+\delta^{-2\d}\right)
\exp\left(- \frac{(\overline H(p))^4 \lambda^4}{ C\delta} \right)\\
& \leq C\lambda^{-\d} \left(\lambda^{-2\d}+\delta^{-2\d}\right) 
\exp\left(- \frac{ \lambda^{4(1+q)}}{ C\delta}\right).
\end{align*}
This is~\eqref{e.thmdeltaGlobal}. Using again that $\lambda \leq C|p|$ and~\eqref{ClPourBarH}, we find that~\eqref{condcondA} and~\eqref{condcondB} are satisfied provided that we have both
\begin{equation*}\label{condcondAhb}
\lambda \geq   C \left( \frac{\delta^{\frac17}}{\lambda^{\frac{6q}{7}}} + \frac{\delta^{\frac25}}{\lambda^{\frac{12q}{5}}} \right) \log \left( 2+  \frac1{\delta\lambda} \right)
\end{equation*}
and 
\begin{equation*}\label{condcondBkjqbvsd}
\lambda \geq   C  \left( \frac{\delta^{\frac13}}{\lambda^{2q}} + \frac{\delta^{\frac23}}{\lambda^{4q}} \right)  \log \left( 2+  \frac1{\delta\lambda} \right).
\end{equation*}
One easily checks that the second condition is redundant with the first, and the first term in parentheses of the first condition is the limiting one. We obtain finally that both hold if
\begin{equation*}\lambda \geq   C  \delta^{\frac{1}{7+6q}} \left( 1+  \left|\log \delta\right|\right).
\end{equation*}
This completes the proof of the theorem.
\end{proof}

\section{Error estimates for the time dependent problem}\label{s.timedep}

In this section we prove Theorem \ref{p.cvuep} on the convergence rate of the solution $u^\ep$ of \eqref{HJq} to the solution $ u$ of the homogenized problem \eqref{HJqhom}. This part is very close to the corresponding one for first order problems (see section 7 in \cite{ACS}). However the second order term induces additional technicalities, so the result is not just routine adaptation of \cite{ACS}: this is the reason why we provide the details. 

\smallskip

Before beginning the proof, we introduce some notation and recall some properties of~\eqref{HJq}. According to~\cite{AT}, there exists~$L>0$, depending on $(d,q,\Lambda,\| g \|_{C^{1,1}(\Rd)})$ such that, for all $\ep > 0$, $x,y\in \Rd$ and $s,t\geq 0$,
\begin{equation}\label{ueplip}
 |u^\ep(x,t ) - u^\ep(y,s )| \leq L\left( |x-y| + |s-t| \right)
\end{equation}
and 
\begin{equation}\label{ulip}
|u(x,t) - u(y,s)| \leq L\left( |x-y| + |s-t| \right).
\end{equation}
It also follows easily from this that for each $\ep > 0$, $x\in \Rd$ and $0 \leq t\leq T$,
\begin{equation}\label{uesup}
|u(x,t)| + |u^\ep(x,t )| \leq K + LT \leq C(1+T).
\end{equation}

The key point in the proof in the proof of Theorem \ref{p.cvuep} amounts to estimate the difference $u^\ep- u$ by the difference between $\delta v^\delta$ and $\overline H$. This is a purely deterministic PDE fact, which we summarize in the following lemma. Throughout, $C$ and $c$ denote positive constants which may vary in each occurrence and depend only on~$(d,q,\Lambda,\| g \|_{C^{1,1}(\Rd)})$.

\begin{lem}\label{l.jzfcojjh} There exists a constant $C>0$ depending only on $(d,q,\Lambda,\| g \|_{C^{1,1}(\Rd)})$ such that, with $L> 0$ as in~\eqref{ueplip} and~\eqref{ulip}, we have,  for any $\lambda, \delta, \ep>0$, with $0<\ep<\delta<1$ and $R := C\left(\frac{1}{\lambda\ep}+\frac{1}{\delta}\right)$,
\begin{multline}\label{jzfcojjh1}
\Bigg\{ \omega\in \Omega\ : \ \sup_{x\in B_T, \ t\in [0,T]} u^\ep(x,t,\omega)-u(x,t) >  C\left(\lambda+ \frac{\ep^{\frac23}}{\delta^{\frac13}}+ \frac{\ep^{\frac13}}{\delta^{\frac23}}\right) \Bigg \}  \\  
\subseteq \Big\{ \omega\in \Omega\ : \ \inf_{(y,p) \in B_{R} \times B_{L}} \left( -\delta v^\delta (y ,p,\omega) - \overline H(p)\right) <  - \lambda \Big\}
\end{multline}
and
\begin{multline}\label{jzfcojjh2}
\Bigg\{ \omega\in \Omega\ : \ \inf_{x\in B_T, \ t\in [0,T]} u^\ep(x,t)-u(x,t) < -C\left(\lambda+ \frac{\ep^{\frac23}}{\delta^{\frac13}}+ \frac{\ep^{\frac13}}{\delta^{\frac23}}\right) \Bigg \}  \\  
\subseteq \Big\{ \omega\in \Omega\ : \ \sup_{(y,p) \in B_{R} \times B_{L}} \left( -\delta v^\delta (y ,p) - \overline H(p)\right) >   \lambda \Big\}.
\end{multline}
\end{lem}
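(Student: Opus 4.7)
I focus on~\eqref{jzfcojjh1}; the proof of~\eqref{jzfcojjh2} is completely symmetric (obtained by exchanging sub- and super-solutions). The plan is to quantify the two-scale perturbed test function method of Evans, in the spirit of~\cite{ICD} and~\cite[Sec.~7]{ACS}, using $v^\delta(\cdot,p)$ as a ``corrector'' to produce, from $u$, a function that is an approximate supersolution of the $\ep$-problem. I argue by contradiction: suppose $u^\ep(x_0,t_0)-u(x_0,t_0)$ exceeds
\begin{equation*}
\Lambda_0:=C\bigl(\lambda+\ep^{2/3}\delta^{-1/3}+\ep^{1/3}\delta^{-2/3}\bigr)
\end{equation*}
for some $(x_0,t_0)\in B_T\times[0,T]$, while $-\delta v^\delta(y,p)-\overline H(p)\geq -\lambda$ throughout $B_R\times B_L$. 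The goal is to derive a quantitative contradiction.

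Since $u$ is only Lipschitz, I first regularize it by sup-convolution in space and time with parameter $\alpha>0$, producing $u_\alpha$ satisfying $u\leq u_\alpha\leq u+C\alpha$, semiconvex with constant $C/\alpha$, and remaining a supersolution of~\eqref{HJqhom} modulo errors of order $\alpha$ (the homogenized Hamiltonian is translation invariant). Then I perform a doubling of variables on $(B_T\times[0,T])^2$ via
\begin{equation*}
\Phi(x,t,y,s):=u^\ep(x,t)-u_\alpha(y,s)-\frac{|x-y|^2+(t-s)^2}{2\beta}-\sigma\bigl(|x-x_0|^2+(t-t_0)^2\bigr),
\end{equation*}
with $\beta,\sigma>0$ to be chosen. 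Let $(\bar x,\bar t,\bar y,\bar s)$ attain the maximum. The localizer $\sigma$ (small) keeps $\bar x$ near $x_0$ and ensures $\sup\Phi\geq\Lambda_0-C\alpha-o(1)$; standard doubling estimates yield $|\bar x-\bar y|+|\bar t-\bar s|\leq C\beta^{1/2}$. Set $p:=(\bar x-\bar y)/\beta$ and $c:=(\bar t-\bar s)/\beta$; Lipschitz continuity gives $|p|,|c|\leq L$. The perturbed test function
\begin{equation*}
\phi(x,t):=\frac{|x-\bar y|^2+(t-\bar s)^2}{2\beta}+\sigma\bigl(|x-x_0|^2+(t-t_0)^2\bigr)+\ep v^\delta(x/\ep,p)
\end{equation*}
is then such that $u^\ep-\phi$ has a local maximum at $(\bar x,\bar t)$.

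The heart of the argument is to combine the subsolution inequality for $u^\ep$ at $(\bar x,\bar t)$ with the cell equation~\eqref{e.cellp} for $v^\delta(\cdot,p)$ evaluated at $\bar x/\ep$. Since $\delta v^\delta-\tr(A D^2v^\delta)+H(p+Dv^\delta,\cdot)=0$, the Hamiltonian and the ``large'' trace contributions involving $v^\delta$ cancel exactly, leaving
\begin{equation*}
c-\delta v^\delta(\bar x/\ep,p)\leq E_1,
\end{equation*}
where $E_1$ collects the trace of the quadratic penalty, its coupling with $Dv^\delta$, the localizer contribution, and the Crandall--Ishii--Lions matrix correction $X-Y\leq C/\beta$. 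Separately, Jensen's lemma applied to the semiconvex $u_\alpha$ at $(\bar y,\bar s)$ produces a second-order supergradient, and the supersolution property for the homogenized equation~\eqref{HJqhom} gives $c+\overline H(p)\geq -E_2$ with $E_2=O(\alpha)$. Adding the two inequalities,
\begin{equation*}
-\delta v^\delta(\bar x/\ep,p)-\overline H(p)\leq E_1+E_2-\bigl(\Lambda_0-C\alpha-o(1)\bigr).
\end{equation*}
The choice $R=C(1/(\lambda\ep)+1/\delta)$ suffices to guarantee $\bar x/\ep\in B_R$ (one uses $|\bar x|\leq T+o(1)$ together with the fact that the conclusion is trivial unless $\lambda\leq C/T$), so the assumed lower bound $-\lambda$ applies, yielding $\Lambda_0\leq 2\lambda+E_1+E_2+o(1)$. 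Balancing $\alpha\sim(\ep^2/\delta)^{1/3}$ and $\beta\sim\alpha^2$ contradicts the definition of $\Lambda_0$.

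The main obstacle is controlling the error $E_1$: unlike the first-order case of~\cite[Sec.~7]{ACS}, the second-order term $\ep\tr(A(\bar x/\ep)D^2\phi)$ produces contributions of order $\ep/\beta$ from the quadratic penalty and terms involving $\|D^2v^\delta\|$, the latter only controlled at the scale $1/\delta$. The second-order doubling lemma (whose applicability hinges on the semiconvexity of $u_\alpha$) supplies a matching upper bound on the $u^\ep$-side, allowing the dangerous $1/\beta$ terms to cancel; the residual $\ep/\beta$ and $\alpha$ contributions are exactly what determine the rate $\ep^{2/3}\delta^{-1/3}+\ep^{1/3}\delta^{-2/3}$ in $\Lambda_0$.
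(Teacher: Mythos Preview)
Your sketch has a genuine gap at the step where you introduce the corrector. You first maximize
\[
\Phi(x,t,y,s)=u^\ep(x,t)-u_\alpha(y,s)-\tfrac{1}{2\beta}\bigl(|x-y|^2+(t-s)^2\bigr)-\sigma(\cdots),
\]
obtain a maximizer $(\bar x,\bar t,\bar y,\bar s)$, freeze $p=(\bar x-\bar y)/\beta$, and then assert that $u^\ep-\phi$ with $\phi(x,t)=\tfrac{1}{2\beta}|x-\bar y|^2+\cdots+\ep v^\delta(x/\ep,p)$ has a local maximum at $(\bar x,\bar t)$. This is not true: the corrector term $\ep v^\delta(x/\ep,p)$ was absent from $\Phi$, and adding it shifts the maximizer. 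More seriously, even if the maximum stayed put, $v^\delta(\cdot,p)$ is only Lipschitz, not $C^2$, so $\phi$ is not an admissible test function for the viscosity subsolution $u^\ep$. Your appeal to ``the Crandall--Ishii--Lions matrix correction'' and to the semiconvexity of $u_\alpha$ does not address this: the second-order coupling that must be handled is between the two \emph{non-smooth} functions $u^\ep$ and $v^\delta$, not between $u^\ep$ and $u_\alpha$ (the homogenized equation is first order, so the $u$-side needs no matrix inequality at all).

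The paper resolves this in two ways simultaneously. First, the corrector is built into the auxiliary functional from the start, with the direction allowed to depend on the doubling variables via a Lipschitz truncation $\zeta$:
\[
\Phi(x,y,t,s)=u^\ep(x,t)-u(y,s)-\ep v^\delta\!\left(\tfrac{x}{\ep};\,\zeta\!\left(\tfrac{x-y}{\alpha}\right)\right)-\tfrac{1}{2\alpha}|x-y|^2-\tfrac{1}{2\ep}(t-s)^2-ds-q(1+|x|^2)^{1/2}.
\]
Second, after locating the maximizer and freezing $(y_0,s_0)$ and the direction, a \emph{further} doubling $\Psi(x,z,t)$ with penalty $\tfrac{1}{2\sigma}|x-z|^2$ is introduced to separate $u^\ep(x,t)$ from $\ep v^\delta(z/\ep,p)$; only then can the maximum principle for semicontinuous functions be applied to the pair $(u^\ep,\,v^\delta)$, producing matrices $X_{\sigma,\eta},Y_{\sigma,\eta}$ and the cancellation of the dangerous second-order terms through the Lipschitz bound on $\Sigma$. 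The choice of parameters $\theta=\ep/\delta$, $\alpha=\ep^{1/3}\delta^{-2/3}$ then yields the rate $\ep^{2/3}\delta^{-1/3}+\ep^{1/3}\delta^{-2/3}$. Your sup-convolution of $u$ is harmless but unnecessary; what is missing is the $(x,z)$ doubling that makes the viscosity machinery applicable to $u^\ep$ versus $v^\delta$.
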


\begin{proof} Since the arguments for \eqref{jzfcojjh1} and \eqref{jzfcojjh2} are nearly identical we only prove \eqref{jzfcojjh1}. Fix $\lambda,\delta,\ep>0$, with $\ep<\delta<1$, and a realization of the coefficients $\omega=(\Sigma,H) \in \Omega$ such that 
\be\label{condsuromega}
\inf_{(y,p) \in B_{R} \times B_{L}} \left( -\delta v^\delta (y ,p) - \overline H(p)\right)\geq - \lambda. 
\ee
We are going to show that  
\begin{equation}\label{ineqfinale}
\sup_{x\in B_T, \ t\in [0,T]} u^\ep(x,t)-u(x,t) \leq C\left(\lambda+ \frac{\ep^{\frac23}}{\delta^{\frac13}}+ \frac{\ep^{\frac13}}{\delta^{\frac23}}\right).
\end{equation}
Define $\zeta:\Rd\to\Rd$ by
\begin{equation*}\label{}
\zeta(x):=\max\left\{ \frac L{|x|}, 1 \right\} x,
\end{equation*}
and notice that 
\begin{equation} \label{ss}
|\zeta(x)| = L\wedge |x| \quad \mbox{and} \quad |\zeta(x) - \zeta(y)| \leq |x-y|.
\end{equation}
Fix parameters $\alpha,q > 0$ and $d\in (0,1)$ to be chosen below and consider the auxiliary function $\Phi:\Rd \times \Rd \times [0,T] \times [0,T] \times \Omega \to \R$ given by
\begin{multline}\label{auxfun}
\Phi(x,y,t,s ):= u^\ep ( x,t ) - u(y,s) - \ep v^\delta \left( \frac x\ep  \, ; \zeta\left( \frac{x-y}{\alpha}  \right) \right) -  \frac1{2\alpha}|x-y|^2 \\ -\frac1{2\ep} (t-s)^2 -ds-q  \left(1+|x|^2 \right)^{\frac12} + q.
\end{multline}
Using~\eqref{dvdsup},~\eqref{uesup} and~\eqref{ss}, we have, for each $(x,y,t,s ) \in \Rd \times \Rd \times [0,T] \times [0,T] \times \Omega$,
\begin{equation}\label{crudeP}
|\Phi(x,y,t,s )| \leq C(1+T) + C \ep\delta^{-1}  - \frac{1}{2\alpha}|x-y|^2 - \frac{1}{2\ep} (t-s)^2 -ds- q \left(1+|x|^2 \right)^{\frac12} + q.
\end{equation}
It follows that, for each $\omega\in \Omega$, the function $\Phi$ attains its global maximum at some point of $\Rd \times \Rd\times [0,T] \times [0,T]$. Set
\begin{equation*}\label{}
M(\omega):=\max_{\Rd \times \Rd\times [0,T] \times [0,T]} \Phi(\cdot ).
\end{equation*}
We now record two elementary estimates that necessarily hold for any $\omega\in \Omega$ and at any global maximum point $(x_0, y_0, t_0, s_0) \in \Rd \times \Rd\times [0,T] \times [0,T]$ of $\Phi(\cdot )$, i.e.,  such that
\begin{equation}\label{bgpt}
\Phi( x_0, y_0, t_0, s_0 ) = M(\omega).
\end{equation}
The inequality $\Phi( x_0, y_0, t_0, s_0 ) \geq \Phi(0,0,0,0 )$ yields, in light of~\eqref{crudeP}, that
\begin{equation}\label{easybnds}
q |x_0| + \frac{1}{2\ep} (t_0-s_0)^2 \leq C(1+T) + \frac{C\ep}{\delta}\leq CT.
\end{equation}
If $s_0\neq 0$, then by~\eqref{ulip} and since $s\mapsto u(y_0,s) + (s-t_0)^2/2\ep$ has a minimum at $s=s_0$, we deduce that
\begin{equation}\label{s0t0}
|s_0-t_0| \leq L \ep.
\end{equation}
Inequality~\eqref{s0t0} is also satisfied for a similar reason if $t_0\neq 0$, and trivially if $s_0=t_0=0$. We also claim that 
\begin{equation}\label{liplam}
|x_0-y_0| \leq L\alpha. 
\end{equation}
If not, then $y\mapsto \zeta( (x_0-y) / \alpha)$ is constant in a neighborhood of $y_0$ and we obtain from~\eqref{bgpt} that
\begin{equation*}
y \mapsto u(y,s_0) +  \frac1{2\alpha}|x_0-y|^2 \quad \mbox{has a local minimum at} \ y = y_0.
\end{equation*}
Due to~\eqref{ulip}, we conclude that $|x_0-y_0| \leq L \alpha$, which is a contradiction. In particular,
\begin{equation}\label{zetafix}
\zeta\left( \frac{x_0-y_0}{\alpha} \right) = \frac{x_0-y_0}{\alpha}.
\end{equation}

We now commence with the proof of~\eqref{ineqfinale}, following the classical argument for the proof of the comparison principle for viscosity solutions and ideas from~\cite{ICD}. Fix $\omega$ for which \eqref{condsuromega} holds and select a point $(x_0, y_0, t_0, s_0) \in \Rd \times \Rd\times [0,T] \times [0,T]$ as in~\eqref{bgpt}. We assume for the moment that $s_0>0$ and $t_0$ is sufficiently large, in a sense to be explained below.  

The first step is to fix $(x,t)=(x_0,t_0)$, allow $(y,s)$ to vary, and to use the equation for $u$. The goal is to derive~\eqref{goal1}, below. From~\eqref{bgpt}, we see that 
\begin{multline}\label{frtp}
(y,s) \mapsto u(y,s) + \ep v^\delta \left( \frac {x_0}\ep  \, ; \zeta\left( \frac{x_0-y}{\alpha}  \right)  \right) +  \frac1{2\alpha}|x_0-y|^2 +\frac1{2\ep} (t_0-s)^2+ds  \\ \mbox{has a local minimum at} \ (y,s) = (y_0,s_0).
\end{multline}
According to~\eqref{vdpdepp} and~\eqref{ss},
\begin{multline}\label{lstln}
\left|\ep v^\delta \left( \frac {x_0}\ep  \, ; \zeta\left( \frac{x_0-y}{\alpha}  \right)  \right) - \ep v^\delta \left( \frac {x_0}\ep  \, ; \zeta\left( \frac{x_0-y_0}{\alpha}  \right)  \right)\right| \\ \leq \frac{C\ep}{\delta} \left| \zeta\left( \frac{x_0-y}{\alpha}  \right) - \zeta\left( \frac{x_0-y_0}{\alpha}  \right) \right| \leq \frac{C\ep|y-y_0|}{\delta\alpha}.
\end{multline}
Using~\eqref{frtp}, \eqref{lstln}, the fact that equality holds in~\eqref{lstln} at $y=y_0$ and by enlarging $C> 0$ slightly, we obtain that, for $\theta>0$,
\begin{multline}\label{frto}
(y,s) \mapsto u(y,s) +  \frac1{2\alpha}|x_0-y|^2 +\frac1{2\ep} (t_0-s)^2+ds  + C\frac{\ep}{\delta\alpha} |y-y_0|+\frac\theta2(s-s_0)^2 \\ \mbox{has a strict local minimum at} \ (y,s) = (y_0,s_0).
\end{multline}
It follows that, for all sufficiently small $\beta > 0$, there exist  $(y_\beta,s_\beta)\in \Rd\times[0,T]$ such that $(y_\beta,s_\beta) \rightarrow (y_0,s_0)$ as $\beta \to 0$ and
\begin{multline}\label{frto2}
(y,s) \mapsto u(y,s) +  \frac1{2\alpha}|x_0-y|^2 +\frac1{2\ep} (t_0-s)^2+ds  + C\frac{\ep}{\delta\alpha}  \left(\beta+|y-y_0|^2 \right)^{\frac12}+\frac\theta2(s-s_0)^2 \\ \mbox{has a local minimum at} \ (y,s) = (y_\beta,s_\beta).
\end{multline}
Using equation \eqref{HJqhom} satisfied by $u$, we obtain
\begin{equation*}\label{}
-d+ \frac1\ep (t_0 - s_\beta)-\theta(s_\beta-s_0) + \overline H\left( \frac{x_0 - y_\beta}{\alpha} - Q_\beta \right) \geq 0,
\end{equation*}
where $Q_\beta := C\frac{\ep}{\delta\alpha}  \left( \beta+|y_\beta-y_0|^2 \right)^{-\frac12} (y_\beta-y_0)$. Since $|Q_\beta| \leq C\ep/(\delta\alpha)$, the Lipschitz continuity of $\overline H$ yields
\begin{equation*}\label{}
-d+\frac1\ep (t_0 - s_\beta) -\theta(s_\beta-s_0)+ \overline H\left( \frac{x_0 - y_\beta}{\alpha}  \right) \geq - C\frac{\ep}{\delta\alpha},
\end{equation*}
and, after letting $\beta \to 0$, we find
\begin{equation}\label{goal1}
-d+ \frac1\ep (t_0-s_0) + \overline H\left( \frac{x_0-y_0}{\alpha} \right) \geq - C\frac{\ep}{\delta\alpha}. 
\end{equation}

We next fix $(y,s) = (y_0,s_0)$ and let $(x,t)$ vary, in order to use the equations for $u^\ep$ and $v^\delta$. The intermediate goal is to prove~\eqref{goal22}, below, to complement~\eqref{goal1}. 

From~\eqref{bgpt}, we see that 
\begin{multline*}
(t,x)\to  u^\ep ( x,t )  - \ep v^\delta \left( \frac x\ep  \, ; \zeta\left( \frac{x-y_0}{\alpha}  \right) \right) -  \frac1{2\alpha}|x-y_0|^2 \\ -\frac1{2\ep} (t-s_0)^2 -q  \left(1+|x|^2 \right)^{\frac12}  \; \mbox{\rm has a maximum at $(x_0,t_0)$.}
\end{multline*}
Using \eqref{zetafix} and \eqref{lstln}, the above inequality implies, as in the previous step, that
\begin{multline}\label{x0t0optimal}
(t,x)\to  u^\ep ( x,t )  - \ep v^\delta \left( \frac x\ep  \, ;  \frac{x_0-y_0}{\alpha}  \right) -  \frac1{2\alpha}|x-y_0|^2  -\frac1{2\ep} (t-s_0)^2 \\-q  \left(1+|x|^2 \right)^{\frac12}  - C\frac{\ep}{\delta\alpha} |x-x_0|   \; \mbox{\rm has a maximum at $(x_0,t_0)$.}
\end{multline}
Let us now fix a small parameter $\theta\in (0,1)$, to be chosen later, and consider a maximum point $(x_\theta, t_\theta)$ of the perturbed problem
\begin{multline}\label{txtheta}
(t,x)\to  u^\ep ( x,t )  - \ep v^\delta \left( \frac x\ep  \, ;  \frac{x_0-y_0}{\alpha}  \right) -  \frac1{2\alpha}|x-y_0|^2  -\frac1{2\ep} (t-s_0)^2 \\-q  \left(1+|x|^2 \right)^{\frac12}  - \tilde C\frac{\ep}{\delta\alpha}\left(\left(\theta^2+ |x-x_0|^2\right)^{\frac12}  -\theta\right)-\frac{1}{2\ep} (t-t_0)^2,
\end{multline}
with $\tilde C$  to be chosen below.

For later use, we  estimate the distance from $(x_0,t_0)$ to $(x_\theta,t_\theta)$: 
by \eqref{x0t0optimal}, we have
\begin{multline*}
 u^\ep ( x_\theta,t_\theta )  - \ep v^\delta \left( \frac{x_\theta}{\ep}  \, ;  \frac{x_0-y_0}{\alpha}  \right) -  \frac1{2\alpha}|x_\theta-y_0|^2  -\frac1{2\ep} (t_\theta-s_0)^2 -q  \left(1+|x_\theta|^2 \right)^{\frac12}  - \frac{C\ep}{\delta\alpha} |x_\theta-x_0|   \\
\leq
 u^\ep ( x_0,t_0 )  - \ep v^\delta \left( \frac{x_0}{\ep } \, ;  \frac{x_0-y_0}{\alpha}  \right) -  \frac1{2\alpha}|x_0-y_0|^2  -\frac1{2\ep} (t_0-s_0)^2 -q  \left(1+|x_0|^2 \right)^{\frac12} . 
\end{multline*}
On another hand, optimality of $(x_\theta,t_\theta)$ in \eqref{txtheta} implies
\begin{multline*}
u^\ep ( x_0,t_0 )  - \ep v^\delta \left( \frac{x_0}{\ep}  \, ;  \frac{x_0-y_0}{\alpha}  \right) -  \frac1{2\alpha}|x_0-y_0|^2  -\frac1{2\ep} (t_0-s_0)^2 -q  \left(1+|x_0|^2 \right)^{\frac12} \\
\leq  u^\ep ( x_\theta,t_\theta )  - \ep v^\delta \left( \frac{x_\theta}{\ep}  \, ;  \frac{x_0-y_0}{\alpha}  \right) -  \frac1{2\alpha}|x_\theta-y_0|^2  -\frac1{2\ep} (t_\theta-s_0)^2 \\-q  \left(1+|x_\theta|^2 \right)^{\frac12}  - \tilde C\frac{\ep}{\delta\alpha}\left( \left(\theta^2+ |x_\theta-x_0|^2\right)^{\frac12}  -\theta\right)-\frac{1}{2\ep}(t_\theta-t_0)^2.
\end{multline*}
Putting together the above inequalities yields 
$$
\tilde C\frac{\ep}{\delta\alpha}\left(\left(\theta^2+ |x_\theta-x_0|^2\right)^{\frac12}  -\theta\right)+\frac{1}{\ep}(t_\theta-t_0)^2  \leq C\frac{\ep}{\delta\alpha} |x_\theta-x_0| ,
$$
so that, if $\tilde C$ is sufficiently large, 
\begin{equation}\label{estixt0-xttheta}
\left| x_0-x_\theta\right|\leq C\theta\qquad {\rm and }\qquad |t_\theta-t_0|\leq C \left(\frac{\ep^2\theta}{\delta\alpha}\right)^{\frac12}. 
\end{equation}
In particular, if $t_0> C \left(\frac{\ep^2\theta}{\delta\alpha}\right)^{\frac12}$, then $t_\theta>0$. 
We finally fix two other parameters $\sigma, \rho > 0$ (which will be sent to zero shortly) and introduce a last auxiliary function $\Psi:\Rd\times \Rd \times [0,T] \to \R$ defined by
\begin{align}\label{auxfun2}
\Psi(x,z,t) & := u^\ep ( x,t ) - \ep v^\delta \left( \frac z\ep  \, ;  \frac{x_0-y_0}{\alpha} \right) -  \frac1{2\alpha}|x-y_0|^2  -\frac1{2\ep} (t-s_0)^2 \\ 
& \qquad  - q\left(1+|x|^2 \right)^{\frac12}  - \frac{1}{2\sigma} |z-x|^2 - \tilde C\frac{\ep}{\delta\alpha}\left( \left(\theta^2+ |x-x_0|^2\right)^{\frac12}  -\theta\right) \nonumber
\\ 
& \qquad-\frac{1}{2\ep}(t-t_0)^2 -\frac\rho2|(x,t)-(x_\theta,t_\theta)|^2. \nonumber
\end{align}
The last term in~\eqref{auxfun2} provides some strictness and therefore, by~\eqref{bgpt}, there exist points $(x_\sigma,z_\sigma,t_\sigma) \in \Rd \times\Rd \times [0,T]$ such that $(x_\sigma,z_\sigma,t_\sigma)\to (x_\theta,x_\theta,t_\theta)$ as $\sigma \to 0$ and
\begin{equation*}\label{}
\Psi(x_\sigma,z_\sigma,t_\sigma) = \sup_{\Rd\times\Rd\times[0,T]} \Psi. 
\end{equation*}
From the Lipschitz regularity of $v^\delta \left(\cdot  \, ;  \frac{x_0-y_0}{\alpha} \right)$ recalled in \eqref{vdlip}, we also have 
\be\label{xthetax0}
|x_\sigma-z_\sigma|\leq K_p\sigma. 
\ee
According to the maximum principle for semicontinuous functions \cite{CIL}, it follows that, for any $\eta>0$ there exist $X_{\sigma,\eta}, Y_{\sigma,\eta}\in\SL^d$ such that 
$$
\left(\frac{1}{\ep}(t_\sigma-s_0)+\frac{1}{\ep}(t_\sigma-t_0)+ \rho(t_\sigma-t_\theta), \frac{x_\sigma-y_0}{\alpha}+\frac{x_\sigma-z_\sigma}{\sigma}+p_\sigma, X_{\sigma,\eta} \right) \in \overline {\mathcal P}^{2,+}u^\ep ( x_\theta,t_\theta )
$$
$$
\left(\frac{x_\sigma-z_\sigma}{\sigma}, Y_{\sigma,\eta} \right)\in \overline {\mathcal P}^{2,-}v^\delta\left(\frac{z_\sigma}{\ep} ;  \frac{x_0-y_0}{\alpha} \right), 
$$
\be\label{matineq}
\left(\begin{array}{cc}
X_\sigma & 0\\
0 & \frac{1}{\ep} Y_{\sigma,\eta} \end{array}
\right)
\leq M_{\sigma}
+ \eta M_{\sigma}^2, 
\ee
where 
$$
p_{\sigma}= \frac{q x_\sigma}{\left(1+|x_\sigma|^2 \right)^{\frac12}}+  \frac{\tilde C\ep(x_\sigma-x_0)}{\delta\alpha\left(\theta^2+ |x_\sigma-x_0|^2\right)^{\frac12} }+\rho(x_\sigma- x_\theta), 
$$
\be\label{defZsigma}
M_{\sigma}= \left(\begin{array}{cc} \frac{1}{\sigma} I_d & -\frac{1}{\sigma} I_d \\ -\frac{1}{\sigma} I_d & \frac{1}{\sigma} I_d\end{array}\right)
+ 
\left(\begin{array}{cc} P_{\sigma} & 0 \\ 0 & 0\end{array}\right)
\ee
$\; {\rm and}$
\begin{multline}
\; P_{\sigma}= \frac{1}{\alpha} I_d+ q \left(\frac{I_d}{\left(1+|x_\sigma|^2 \right)^{\frac12}}-\frac{x_\sigma\otimes x_\sigma}{\left(1+|x_\sigma|^2 \right)^{\frac32}}\right)
 \\ + \tilde C\frac{\ep}{\delta\alpha}\left(\frac{I_d}{\left(\theta^2+ |x_\sigma-x_0|^2\right)^{\frac12} }
- \frac{(x_\sigma-x_0)\otimes (x_\sigma-x_0)}{\left(\theta^2+ |x_\sigma-x_0|^2\right)^{\frac32} }\right)+\rho I_d.
\end{multline}
Note that
\begin{equation}\label{estipPSigma}
|p_\sigma|\leq q +  \frac{\tilde C\ep}{\delta\alpha }+\rho|x_\sigma- x_\theta|\; {\rm and}\; 
\|P_\sigma\| \leq \frac{1}{\alpha}+ 2q+ C\frac{\ep}{\delta\alpha\theta}+\rho. 
\end{equation}
From equation \eqref{HJq} satisfied by $u^\ep$ and since $t_\sigma>0$ for $\sigma$ small enough, we have 
\begin{multline}\label{eqpouruep}
\frac{1}{\ep}(t_\sigma-s_0)+\frac{1}{\ep}(t_\sigma-t_0)+\rho(t_\sigma-t_\theta) -\ep{\rm tr}\left(A\left(\frac{x_\sigma}{\ep} \right)X_{\sigma,\eta}\right)\\+ H\left( \frac{x_\sigma-y_0}{\alpha}+\frac{x_\sigma-z_\sigma}{\sigma}+p_\sigma,\frac{x_\sigma}{\ep} \right) \leq 0,
\end{multline}
while, in view of equation \eqref{e.cellp} for $v^\delta$, we have
\be\label{eqpourvdelta}
\delta v^\delta\left(\frac{z_{\sigma}}{\ep} ;  \frac{x_0-y_0}{\alpha} \right)-{\rm tr}\left(A\left(\frac{z_\sigma}{\ep} \right)Y_{\sigma,\eta}\right)+ H\left( \frac{x_\sigma-z_\sigma}{\sigma}+ \frac{x_0-y_0}{\alpha},\frac{z_\sigma}{\ep} \right) \geq 0.
\ee
We multiply inequality \eqref{matineq} by the positive matrix
$$
\left(\begin{array}{c}
\Sigma\left(\frac{x_\sigma}{\ep} \right) \\
\Sigma\left(\frac{z_\sigma}{\ep} \right)\end{array}\right)
\left(\begin{array}{c}
\Sigma\left(\frac{x_\sigma}{\ep} \right) \\
\Sigma\left(\frac{z_\sigma}{\ep} \right)\end{array}\right)^T
$$
and take the trace of the resulting expression to obtain, by \eqref{estipPSigma},
\begin{align*}
\lefteqn{ \tr\left(A\left(\frac{x_\sigma}{\ep} \right)X_{\sigma,\eta}\right)
 - \frac{1}{\ep} {\rm tr}\left(A\left(\frac{z_\sigma}{\ep} \right)Y_{\sigma,\eta}\right) } & \qquad \qquad \\
& \leq \frac{1}{\sigma}\left| \Sigma\left(\frac{x_\sigma}{\ep} \right)-\Sigma\left(\frac{z_\sigma}{\ep} \right)\right|^2
 +\left( \frac{1}{\alpha}+ 2q+ C\frac{\ep}{\delta\alpha\theta}+\rho\right) \left| \Sigma\left(\frac{x_\sigma}{\ep} \right)\right|^2\\
& \qquad  +\eta {\rm tr}\left(M_{\sigma,\eta}^2\left(\begin{array}{c}
\Sigma\left(\frac{x_\sigma}{\ep} \right) \\
\Sigma\left(\frac{z_\sigma}{\ep} \right)\end{array}\right)
\left(\begin{array}{c}
\Sigma\left(\frac{x_\sigma}{\ep} \right) \\
\Sigma\left(\frac{z_\sigma}{\ep} \right)\end{array}\right)^t
\right)
 \end{align*}
Using~\eqref{e.sigbnd} and~\eqref{e.siglip}, we obtain
 \begin{multline}\label{ineqpourtraces}
 {\rm tr}\left(A\left(\frac{x_\sigma}{\ep} \right)X_{\sigma,\eta}\right)
 - \frac{1}{\ep} {\rm tr}\left(A\left(\frac{z_\sigma}{\ep} \right)Y_{\sigma,\eta}\right) \\
 \leq \frac{C|x_\sigma-z_\sigma|^2}{\sigma\ep^2}+C\left( \frac{1}{\alpha}+ q+ \frac{\ep}{\delta\alpha\theta}+\rho\right)
 + \eta C_\sigma
 \end{multline}
where the constant  $C_\sigma$ actually depends also on all the other parameters of the problem, but is independent of $\eta$. 
On another hand we have by \eqref{xthetax0} and \eqref{estipPSigma}: 
\begin{multline}\label{ineqpourH}
\left| H\left( \frac{x_\sigma-y_0}{\alpha}+\frac{x_\sigma-z_\sigma}{\sigma}+p_\sigma,\frac{x_\sigma}{\ep} \right) 
-H\left( \frac{x_\sigma-z_\sigma}{\sigma}+ \frac{x_0-y_0}{\alpha},\frac{z_\sigma}{\ep} \right) \right|  \\
\leq C\left(\frac{|x_\sigma-x_0|}{\alpha}+\frac{|x_\sigma-z_\sigma|}{\ep}+ |p_\sigma|\right) \leq C\left(\frac{|x_\sigma-x_0|}{\alpha}+\frac{\sigma}{\ep}+q +  \frac{\ep}{\delta\alpha }+\rho|x_\sigma- x_\theta|\right).
 \end{multline}
Subtracting \eqref{eqpourvdelta} to \eqref{eqpouruep} and using \eqref{ineqpourtraces} and \eqref{ineqpourH}  we get
\begin{multline*}
\frac{1}{\ep}(t_\sigma-s_0)+\frac{1}{\ep}(t_\sigma-t_0)+\rho(t_\sigma-t_\theta)-\delta v^\delta\left(\frac{z_{\sigma}}{\ep} ;  \frac{x_0-y_0}{\alpha} \right) \\ 
\leq  C\Bigg(  \frac{|x_\sigma-z_\sigma|^2}{\sigma\ep}+\ep\left( \frac{1}{\alpha}+ q+ \frac{\ep}{\delta\alpha\theta}+\rho\right)
+ \frac{|x_\sigma-x_0|}{\alpha}\\ +\frac{\sigma}{\ep}+q+  \frac{\ep}{\delta\alpha }+\rho|x_\sigma- x_\theta|\Bigg)+ \ep\eta C(\sigma).
 \end{multline*}
We let $\eta$, $\sigma$  and then  $\rho$ tend to $0$ to obtain, thanks to \eqref{xthetax0}, 
\begin{multline*}
\frac{1}{\ep}(t_\theta-s_0)+\frac{1}{\ep}(t_\theta-t_0)-\delta v^\delta\left(\frac{x_\theta}{\ep} ;  \frac{x_0-y_0}{\alpha} \right) \\ \leq
 C\bigg(\ep\left( \frac{1}{\alpha}+ q+ \frac{\ep}{\delta\alpha\theta}\right) 
+ \frac{|x_\theta-x_0|}{\alpha}+q+  \frac{\ep}{\delta\alpha }\bigg).
 \end{multline*}
 Recalling \eqref{estixt0-xttheta} and using $0<\delta, \ep\leq 1$, 
\be\label{goal22}
\frac{1}{\ep}(t_0-s_0)-\delta v^\delta\left(\frac{x_\theta}{\ep} ;  \frac{x_0-y_0}{\alpha} \right) \leq
C\left(q+ \frac{\ep^2}{\delta\alpha\theta}+ \frac{\theta}{\alpha}+  \frac{\ep}{\delta\alpha }+ \left(\frac{\theta}{\delta\alpha}\right)^{\frac12}\right).
\ee
We  now put \eqref{goal1} and \eqref{goal22} together: 
$$
-\overline H\left( \frac{x_0-y_0}{\alpha} \right)-\delta v^\delta\left(\frac{x_\theta}{\ep} ;  \frac{x_0-y_0}{\alpha} \right)
 \leq -d+  C\left(q+ \frac{\ep^2}{\delta\alpha\theta}+ \frac{\theta}{\alpha}+  \frac{\ep}{\delta\alpha }+\left(\frac{\theta}{\delta\alpha}\right)^{\frac12}
 \right). 
$$
So if we choose 
\begin{equation}\label{e.choixded}
d=   \lambda+ 2C\left(q+ \frac{\ep^2}{\delta\alpha\theta}+ \frac{\theta}{\alpha}+  \frac{\ep}{\delta\alpha }+\left(\frac{\theta}{\delta\alpha}\right)^{\frac12}\right),
\end{equation}
we have a contradiction with \eqref{condsuromega} provided $\frac{|x_\theta|}{\ep}\leq R$. Thanks to \eqref{easybnds} and \eqref{estixt0-xttheta},  this latter inequality holds provided 
\begin{equation}\label{CondSurR}
R\geq  C\left(\frac{1}{q\ep}+\frac{\theta}{\ep}\right).
\end{equation}

Therefore, if \eqref{e.choixded} and \eqref{CondSurR} hold,  we have either $s_0=0$, or $t_0\leq C\left(\frac{\ep^2\theta}{\delta\alpha}\right)^{\frac12}$. Let us first assume that $s_0=0$. In this case, by the Lipschitz bound \eqref{ueplip}, we have
\begin{align*}
M(\omega) & \leq  u^\ep ( x_0,t_0 ) - u(y_0,0) - \ep v^\delta \left( \frac {x_0}\ep  \, ; \zeta\left( \frac{x_0-y_0}{\alpha}  \right) \right) -  \frac1{2\alpha}|x_0-y_0|^2  -\frac1{2\ep} t_0^2 \\ 
& \leq u^\ep ( y_0,0 ) +L|(x_0,t_0)-(y_0,0)|- u(y_0,0) +C \frac\ep\delta -  \frac1{2\alpha}|x_0-y_0|^2  -\frac1{2\ep} t_0^2\\
& \leq C\left(\alpha+\ep+\frac\ep\delta\right),
\end{align*}
using that $u^\ep ( y_0,0 )= u_0(y_0)=u(y_0,0)$. 
On the other hand, if $t_0\leq C\left(\frac{\ep^2\theta}{\delta\alpha}\right)^{\frac12}$, then~\eqref{s0t0} yields  
\begin{align*}
M(\omega) & \leq u^\ep ( x_0,t_0 ) - u(y_0,s_0) - \ep v^\delta \left( \frac {x_0}\ep  \, ; \zeta\left( \frac{x_0-y_0}{\alpha}  \right) \right) -  \frac1{2\alpha}|x_0-y_0|^2  -\frac1{2\ep} (t_0-s_0)^2 \\ 
& \leq u^\ep ( x_0,0 )- u(x_0,0) +L|(x_0,t_0)-(y_0,s_0)| +C\left(\frac{\ep^2\theta}{\delta\alpha}\right)^{\frac12}+C \frac\ep\delta\\ & \qquad  -  \frac1{2\alpha}|x_0-y_0|^2  -\frac1{2\ep} (t_0-s_0)^2\\
& \leq C\left(\alpha+\ep+\frac\ep\delta+\left(\frac{\ep^2\theta}{\delta\alpha}\right)^{\frac12}\right) .
\end{align*}
As a consequence, we have, for any $(x,t)\in B_T\times [0,T]$,  
\begin{multline*}
\Phi(x,x,t,t ):= u^\ep ( x,t ) - u(x,t) - \ep v^\delta \left( \frac x\ep  \, ; 0  \right) -dt-q  \left(1+|x|^2 \right)^{\frac12} + q\\
\leq M(\omega) \leq C\left(\alpha+\ep+\frac\ep\delta+\left(\frac{\ep^2\theta}{\delta\alpha}\right)^{\frac12}\right), 
\end{multline*}
so that, recalling  the choice of $d$ in \eqref{e.choixded} and using $0\leq \ep\leq 1$: 
$$
u^\ep ( x,t ) - u(x,t) 
\leq C\left(\lambda+q+\frac{\ep^2}{\delta\alpha\theta}+ \frac{\theta}{\alpha}+  \frac{\ep}{\delta\alpha }+\left(\frac{\theta}{\delta\alpha}\right)^{\frac12}+ 
\alpha\right). 
$$
If we choose $q=\lambda$, $\theta= \ep\delta^{-1}$ and $\alpha= \ep^{\frac13}\delta^{-\frac23}$, we get
$$
u^\ep ( x,t ) - u(x,t) 
\leq C\left(\lambda+ \frac{\ep^{\frac23}}{\delta^{\frac13}}+ \frac{\ep^{\frac13}}{\delta^{\frac23}}\right) .
$$
Note also that, by definition of $R$, \eqref{CondSurR} holds. To summarize, we have proved that, if \eqref{ineqfinale} holds, then 
$$
\sup_{(x,t)\in B_T\times [0,T]} \left(u^\ep ( x,t ) - u(x,t) \right) \leq C\left(\lambda+ \frac{\ep^{\frac23}}{\delta^{\frac13}}+ \frac{\ep^{\frac13}}{\delta^{\frac23}}\right) .
$$
This is \eqref{jzfcojjh1}. 
\end{proof}

\begin{proof}[Proof of Theorem \ref{p.cvuep}] From Lemma \ref{l.jzfcojjh}, if 
$\displaystyle 
\ep^{\frac23}\delta^{-\frac13}+ \ep^{\frac13}\delta^{-\frac23} \leq c\lambda, $
then 
\begin{multline*}
\Big\{ \omega\in \Omega\ : \ \sup_{x\in B_T, \ t\in [0,T]} \left|u^\ep(x,t)-u(x,t)\right| \geq  \lambda \Big \} \subseteq \\  
\Big\{ \omega\in \Omega\ : \ \sup_{(y,p) \in B_{R} \times B_{L}} \left| \delta v^\delta (y,p) + \overline H(p)\right| \geq c\lambda \Big\},
\end{multline*}
where 
$R := C\left(\frac{1}{\lambda\ep}+\frac{1}{\delta}\right)$. Theorem \ref{p.deltaGlobal} implies that, if inequality
\begin{equation}\label{hboshdf}
\lambda \geq  C\delta^{\frac{1}{7+6q}} \left( 1+  |\log(\delta)|\right)
\end{equation}
holds, then 
$$
\Prob\bigg[\,\sup_{(y,p) \in B_{R} \times B_{L}} \left| \delta v^\delta (y ,p) + \overline H(p)\right| \geq c\lambda \bigg] 
\leq C \lambda^{-2\d}R^{\d} \left(\lambda^{-2\d}+\delta^{-2\d}\right)\exp\left(-\frac{\lambda^{4(1+q)}}{C \delta} \right).
$$
Note that inequalities $\displaystyle 
\ep^{\frac23}\delta^{-\frac13}+ \ep^{\frac13}\delta^{-\frac23} \leq c\lambda$ and~\eqref{hboshdf} are ensured by the conditions  
$$
\delta:=C \ep^{\frac12}\lambda^{-\frac32}
\qquad{\rm 
and}\qquad
\lambda \geq C \ep^{\frac{1}{17+12q}}\left(1+\log(\ep)\right)^{\frac12}.
$$
We deduce that
$$
\Prob\bigg[  \ \sup_{x\in B_T, \ t\in [0,T]} \left|u^\ep(x,t )-u(x,t)\right| \geq  \lambda \bigg] \leq  
C\lambda^{-3\d}\ep^{-\d}\left(\lambda^{-2\d}+\ep^{-\d}\right)\exp\left(-\frac{\lambda^{\frac{11}{2}+4q}}{C \ep^{\frac12}} \right).
$$
As $\ep\leq \lambda$, this implies that \eqref{e.cvuep} holds. 
\end{proof}

\subsection*{Acknowledgements}
 This work was partially supported by the ANR (Agence Nationale de la Recherche) through HJnet project ANR-12-BS01-0008-01. We thank Ilya Bogdanov for indicating a nice proof of Lemma~\ref{l.quanstrasz} to us on Math Overflow. 

\bibliographystyle{plain}
\bibliography{HJrates}

\end{document}